\title{Convergence from Atomistic Model to Peierls-Nabarro Model for Dislocations in Bilayer System with Complex Lattice}
\author{Yahong Yang$^{a}$\thanks{E-mail address: \it\textbf{yyangct@connect.ust.hk}}, Tao Luo$^{b}$\thanks{E-mail address: \it\textbf{luotao41@sjtu.edu.cn}}, Yang Xiang$^{a}$\thanks{E-mail address: \it\textbf{maxiang@ust.hk} }\\$^{a}$\small\textit{Department of Mathematics, Hong Kong University of Science and Technology,}\\ \small\textit{Clear Water Bay, Kowloon, Hong Kong}\\$^{b}$\small\textit{School of Mathematical Sciences, Institute of Natural Sciences, }\\ \small\textit{MOE-LSC, and Qing Yuan Research Institute, Shanghai Jiao Tong University,}\\ \small\textit{ Shanghai, 200240, P.R. China} }
\newtheorem{myre}{Remark}
\begin{document}

\maketitle
\begin{abstract}
	In this paper, we prove the convergence from the atomistic model to the Peierls--Nabarro (PN) model of two-dimensional bilayer system with complex lattice. We show that the displacement field of the dislocation solution of the PN model converges to the dislocation solution of the atomistic model with second-order accuracy. The consistency of PN model and the stability of atomistic model are essential in our proof. The
main idea of our approach is to use several low-degree polynomials  to approximate the energy due to atomistic interactions of different groups of atoms of the complex lattice.

\end{abstract}
\noindent {\it Mathematics Subject Classification}: 35Q70, 35Q74, 74A50, 74G10.

\noindent {\it Keywords}: Dislocations; complex lattice; interpolation polynomial; Peierls--Nabarro  model.
\section{Introduction}

Dislocations are line defects in crystalline materials \cite{Hirth1982} and the dislocation theory is essential to understand the plastic deformation properties in crystalline materials.
Continuum theory of dislocations based on linear elasticity theory  works well outside the dislocation core region, which is a small region of a few lattice constants size around the dislocations; however, it breaks down inside the dislocation core where the atomic structure is heavily distorted.
 Atomistic models and Peierls-Nabarro models \cite{Hirth1982,nabarro1947dislocations,peierls1940size,vitek1968intrinsic} are widely applied to describe dislocation-core related properties. Although atomistic models are able to describe the details in the dislocation core region well, length and time scales of atomistic simulations are limited.
The   Peierls-Nabarro models \cite{nabarro1947dislocations,peierls1940size} are continuum models that incorporate the atomistic structure within the dislocation core region, and can be applied to simulations of larger length and time scales. The nonlinear elastic effect associated with the dislocation core is included in a  Peierls-Nabarro model by a nonlinear potential, i.e., the $\gamma$-surface \cite{vitek1968intrinsic}, that describes the
atomic interaction across the slip plane of a dislocation calculated by atomistic models or first principles calculations.

Although Peierls-Nabarro models with $\gamma$-surfaces   have been widely applied in the materials science problems associated with dislocations (e.g., \cite{Vitek1971-p493-508,Kaxiras1993-p3752-3755,schoeck1994generalized,Movchan1998-p373-396,
Lu2000-p3099-3108,Shen2004-p683-691,
Xiang2008-p1447-1460,Wei2008-p275-293,Dai2013-p1327-1337,Shen2014-p125-131,zhou2015van,dai2016structure}),
  mathematical understandings and rigorous analysis  on the accuracy of this class of models, i.e., convergence from the atomistic models, are still limited. The major challenge comes from the discontinuity in the displacement (i.e. disregistry) across the slip plane of the dislocation,  the nonlinear interaction across which is modeled on the continuum level by the $\gamma$-surface \cite{vitek1968intrinsic}. This situation is essentially different from the condition of the convergence from atomistic models to the Cauchy-Born rule \cite{Born1954-p-} (e.g. \cite{braides1999variational,Blanc2002-p341-381,friesecke2002validity,weinan2007cauchy}), in which smooth elastic displacement is assumed.
    El Hajj {\it et al.} and  Fino  {\it et al.}  \cite{el2009dislocation,fino2012peierls} showed  convergence using the
framework of viscosity solution from an atomistic model to the Peierls-Nabarro model on a square lattice based on a two-body potential with nearest neighbor interaction. However, the calculations of the $\gamma$-surfaces in real atomistic simulations are normally beyond the nearest neighbor atomic interactions.
%In particular, misfit energy in Peierls--Nabarro model is expressed by the interactions across the slip plane in \cite{vitek1968intrinsic}. The displacement field of atoms near the slip plane has a jump condition, which can not be describe well by local analysis.  Furthermore, it is difficult to define the nearest neighbor interactions in complex structure lattice, e.g., graphene, silicon, BN, etc..
Luo {\it et al.} \cite{luo2018atomistic} proved convergence from a full atomistic model to the
Peierls-Nabarro model with $\gamma$-surface for the dislocation in a bilayer system.
 In their convergence proof, each atom interacts with all other atoms  via a two-body interatomic potential whose effective interaction range is much larger than the nearest neighbor interaction, which is common in atomistic simulations. They also proved that the rate of such convergence is $O(\varepsilon^2)$ where $\varepsilon$ is the ratio of the length of the Burgers vector of the dislocation (the lattice constant) to the dislocation core width. These proofs are all for dislocations in crystals with simple lattices.
Silicon and other covalent-bonded crystals have  complex lattices (unions of simple lattices), for which two-body potentials do not work well \cite{stillinger1985computer} and the above proofs for convergence from atomistic models to Peierls-Nabarro models of dislocations  based on two-body potentials do not apply.

In this paper, we perform a rigorous  convergence proof from atomistic model of complex lattice to Peierls-Nabarro model with $\gamma$-surface for an inter-layer  dislocation in a  bilayer system.
We consider the bilayer graphene as the prototype. Each layer has a hexagonal lattice structure which can be considered as a union of two simple triangular lattices with a shift.  In our analysis, the interaction of atoms within the same layer is modeled by a three-body potential (e.g. the Stillinger-Weber potential \cite{stillinger1985computer} commonly used for silicon), and the inter-layer atomic interaction is described by a two-body potential (e.g. the van der Waals-like interactions described well by the Morse potentials in graphene, boron
nitride, and graphene/boron nitride bilayers \cite{zhou2015van}). We focus on a straight edge dislocation.

Our proof is a generalization of the convergence result for dislocations in simple lattice in Ref.~\cite{luo2018atomistic} inspired by the work of E and Ming \cite{weinan2007cauchy} for Cauchy-Born rule without defects, where consistency and stability properties of the atomistic and continuum models play key roles. Due to the complicated interactions on complex lattices under multi-body potentials, a major challenge in the convergence proof from atomistic model on a complex lattice to continuum model is how to construct a simple and accurate link between them. Here we propose a new approach to solve this problem. The main idea of our approach is to use several (here two) low-degree polynomials (here piecewise linear functions) to approximate the energy due to atomistic interactions among different groups of atoms on the complex lattice.
This energy based method is different from that based on Taylor expansions of displacement vectors in the force equations used in Ref.~\cite{weinan2007cauchy}.
 This new approach can be applied generally to continuum modeling based on atomistic structures with complex lattices.

Specifically, we follow the energy method in Ref.~\cite{luo2018atomistic} for the link (consistency and stability) between atomistic model and the Peierls-Nabarro model. Note that energy is an important quantity for the Peierls-Nabarro model of dislocations. The energies of the atomistic model and the Peierls-Nabarro model play a critical role in the proof of stability, in which second variations of the energies are needed.  For the convergence from a simple lattice,
the simple approximation of the energy by piecewise linear interpolation from the atomistic model works well for this purpose \cite{luo2018atomistic}.
However, for the bilayer complex lattice that we are considering, there  two different kinds of atoms, A and B, from two simple lattices, and four kinds of interactions by the three-body potential, i.e., AAA, AAB, ABB and BBB. It is tempting
 to use higher-degree interpolation polynomials for the energy of these complicated atomic interactions.
 However, the degrees of  suitable interpolation polynomials have to be very high, leading to oscillations in the interpolation energy.  Alternatively, we propose
a new way to construct continuum approximation of energy from the atomistic model of complex lattice (in Sec.~9).
The total energy on atomistic level is divided into two parts, for the atomistic interactions among two different groups of atoms on the complex lattice, respectively.
For each part, we establish a piecewise linear interpolation function to approximate it.
This enables simple  calculations of the stability of the two models in the convergence proof. Moreover, note that in the complex lattice, although the dislocation is straight in the continuum model, the locations and displacements in the atomistic model are not the same atom by atom along the dislocation, and we need to perform analysis in the two dimensional slip plane instead of the one-dimensional simplification adopted in the analysis for the simple lattice in Ref.~\cite{luo2018atomistic}.

The paper is organized as follows. We present the atomistic model of complex lattice and Peierls--Nabarro model for an edge dislocation in a bilayer system in Secs.~2 and 3, respectively. A dimensionless small parameter $\varepsilon$ is defined due to the weak interlayer interaction and nondimensionalization is performed in Sec.~4. This small parameter $\varepsilon$ is also the ratio of the lattice constant (the length of the Burgers vector of the dislocation) to the dislocation core width. In Sec.~5, we
%redefine the energy and
collect the notations and assumptions. The main theorems including the wellposedness of the two models and convergence from the atomistic model to the Peierls-Nabarro model with $\gamma$-surface are presented in Sec.~6. Especially, the error of the convergence is shown to be $O(\varepsilon^2)$.  Rigorous proofs of these theorems are presented in Secs.~7-9. In particular, the two piecewise linear interpolation functions that  approximate two parts of the energy of atomistic interactions on the complex lattice are defined in Sec.~9. A proper boundary condition of atomistic model called Atomistic Dislocation Condition (ADC) is proposed to account for the displacement condition on the dislocation in the complex lattice. With this defined energy, the consistency of Peierls-Nabarro model and the stability of atomistic model are proved by comparing the Peierls-Nabarro model with the atomistic model directly.

%In this paper, we redefine the total energy of two models (Section 5.1) so that the total energy is finite while remaining the information of atomistic structure. Therefore, we can obtain the solution of PN model by finding global minimizer of the energy functional. The remained problem in our analysis is to show the uniform of dislocation in $y$ direction, since the similar one-dimensional proposition has been proved in \cite{luo2018atomistic}.

\section{Atomistic Model of Complex Lattice}

 In this section, we present the atomistic model in a bilayer complex lattice system (e.g. bilayer graphene), from which the convergence to the Peierls-Nabarro model with $\gamma$-surface for an inter-layer dislocation is proved.

In the two-dimensional setting, atoms of the bilayer system are located in the planes
  $z=\pm \frac{1}{2}d$, where $d$ is the distance between two layers.

In a plane perpendicular to the $z$ axis, a simple lattice takes the form $$
L\left(\boldsymbol{e}_{i}, \boldsymbol{o}\right)=\left\{\boldsymbol{x} \mid \boldsymbol{x}=\sum_{i=1}^{2} v^{i} \boldsymbol{e}_{i}+\boldsymbol{o},\quad v^{i}\in\mathbb{Z} \right\},
$$where $\{\ve_i\}_{i=1}^2$ is a basis of $\mathbb{R}^2$ and $\vo$ is the location of some atom of the lattice. A complex lattice is regarded as a union of  simple lattices:
 $$
L=L\left(\ve_{i}, \vo\right) \cup L\left(\ve_{i}, \vo+\vp_{1}\right) \cup \cdots L\left(\ve_{i}, \vo+\vp_{k}\right)
$$
for certain integer $k$, and $\vp_{1}, \dots, \vp_{k}$ are the shift vectors.

In particular, one  graphene layer consists two simple lattices (triangular lattices) $A$ and $B$ (see Fig.~\ref{graphene}), and can be represented by
\begin{equation}
\begin{aligned}
&L(\boldsymbol {e_{i}},\vo)=\left\{\boldsymbol{x} \mid v^1\boldsymbol {e_{1}}+v^2\boldsymbol {e_{2}}+\vo, v^{i}\in \mathbb{Z}\right\}&\text{ for $A$},\\
&L(\boldsymbol {e_{i}},\vp)=\left\{\boldsymbol{x} \mid v^1\boldsymbol {e_{1}}+v^2\boldsymbol {e_{2}}+\vo+\vp, v^{i}\in \mathbb{Z} \right\}&\text{ for $B$},\\
&{\textstyle \boldsymbol{e_{1}}=a(1,0), \boldsymbol{e_{2}}=a\left(\frac{1}{ 2}, \frac{\sqrt{3}}{ 2}\right), \vo=(0,0), \vp=a\left(\frac{1}{2},\frac{\sqrt{3}}{ 6}\right)},
\end{aligned}
\end{equation}
where $a$ is the lattice constant.

\begin{figure}[h]%current location
	\centering
	\scalebox{0.35}{\includegraphics{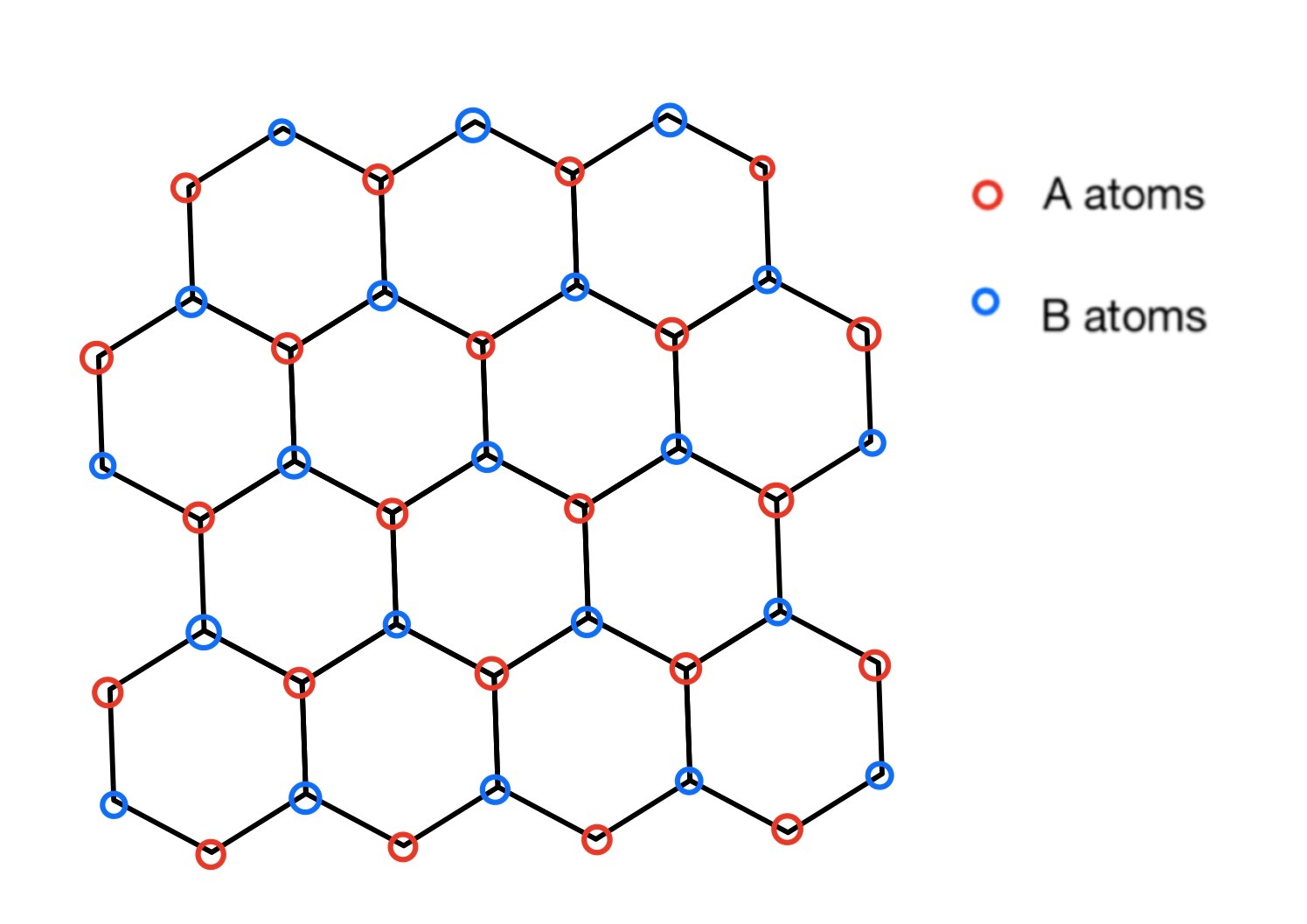}}
	\caption{The lattice of one graphene layer.}
	\label{graphene} %framework,fig1
\end{figure}

In this paper, we consider that a complex lattice arranged in such AB form. The layer located at $z=\frac{1}{2}d$ is
$L=L\left(\ve_{i}, \vo\right) \cup L\left(\ve_{i}, \vo+\vp\right)$,
and that located at $z=-\frac{1}{2}d$ is
$L=L\left(\ve_{i}, \vo+\vd\right) \cup L\left(\ve_{i}, \vo+\vp+\vd\right)$,
where $\vd=a\left(\frac{1}{2},-\frac{\sqrt{3}}{6}\right)$ is the vector between the centers of the two layers.

Suppose that there is an edge dislocation along the $y$ axis centered at $z=x=0$ with  Burgers vector $\boldsymbol{b}=(a,0,0)$. This dislocation generates a nonzero displacement field $\vu=\{\vu^{A+}_{\vs},\vu^{B+}_{\vs},\vu^{A-}_{\vs},\vu^{B-}_{\vs}\}$, where $\vs=(x_s,y_s)\in L$   is the location of the atom in the upper layer $z=\frac{1}{2}d$ (with superscript `$+$') or lower layer $z=-\frac{1}{2}d$ (with superscript `$-$'), `$A$' or `$B$' means that the atom is an $A$ atom or a $B$ atom.
  Here we assume that atoms in this bilayer system only have in-plane displacements, i.e., $\{\vu^{\kappa\pm}_{\vs}\}$ are two-dimensional vectors. Moreover, since the dislocation is a straight edge dislocation along the $y$ axis, the second coordinate of $\vu^{\kappa\pm}_{\vs}$ is $0$, and we denote $\vu^{\kappa\pm}_{\vs}=(u^{\kappa\pm}_{\vs},0)$. For this edge dislocation, the displacement field $u^{\kappa\pm}_{\vs}$ satisfies the following conditions:
\begin{equation}\label{eqn:aphi}
%\begin{aligned}
  \lim_{x_s\to+\infty}\left(u_{\vs}^{\kappa+}-u_{\vs}^{\kappa-}\right)=a, \
  \lim_{x_s\to-\infty}\left(u_{\vs}^{\kappa+}-u_{\vs}^{\kappa-}\right)=0, \
  u_{\vs}^{\kappa+}=-u_{\vs}^{\kappa-},
%  \end{aligned}
\end{equation}
where $\vs=(x_s,y_s)$ and $ \kappa=A$ or $B$.

The atomic interactions in the bilayer system can be divided into two parts: the intralayer interaction (covalent-bond interaction) and the inter-layer interaction (van der Waals-like interaction). For the intralayer interaction within each complex lattice layer, we focus on the three-body potential:
$$
V_{\rm tot}=\frac{1}{3!}\sum_{\substack{i,j,k,\\ i<j<k}}V_{3}\left(\boldsymbol{x}_{i}, \boldsymbol{x}_{j} \boldsymbol{x}_{k}\right).
$$
Recall that Stillinger-Weber potential \cite{stillinger1985computer} for the silicon crystal is the sum of interactions of two-body and three-body potentials, and the three-body potential is
%\begin{equation}V=\sum_{N=2}^{\infty}\frac{1}{N!}\sum_{i_1,i_2,\cdots,i_N}V_N(\vx_{i_1},\vx_{i_2},\cdots,\vx_{i_N}),\label{stillinger}
%\end{equation}
%where $V_N$ is $N$ body interaction and $\vx_{i_1},\vx_{i_2},\cdots,\vx_{i_N}$ are $N$ different atoms in the same layer. In our atomistic model of complex lattice, we use the Stillinger--Weber potential to describe the intralayer interactions. Without loss of generality, we restrict our attention to the case of a three-body potential. In \eqref{stillinger}, $V_3$ is equal to
$$
V_{3}\left(\boldsymbol{x}_{i}, \boldsymbol{x}_{j}, \boldsymbol{x}_{k}\right)=h\left(r_{i j}, r_{i k}, \theta_{j i k}\right),
$$
where
$$
h\left(r_{i j}, r_{i k}, \theta_{j i k}\right)=\lambda e^{\left[\gamma\left(r_{i j}-a\right)^{-1}+\gamma\left(r_{i k}-a\right)^{-1}\right]}\left(\cos \theta_{j i k}+1 / 3\right)^{2}
$$
for some parameters $\lambda$ and $\gamma$, $\vx_{i}, \vx_{j}, \vx_{k}$ are the three different atoms in the same layer, $\theta_{j i k}$ is angle between $\vx_{j}-\vx_{i}$ and $\vx_{k}-\vx_{i}$ and $r_{ij}=|\vx_i-\vx_j|,r_{ik}=|\vx_i-\vx_k|$. In fact,
this three-body potential $V_{3}\left(\boldsymbol{x}_{i}, \boldsymbol{x}_{j}, \boldsymbol{x}_{k}\right)$ depends only on the two vectors $\vs_1=\vx_j-\vx_i$ and $\vs_2=\vx_k-\vx_i$. For this reason, we assume that the three-body potential can be written in the form
\begin{equation}\label{eqn:3bd}
 V_3(\boldsymbol{x}_{i}, \boldsymbol{x}_{j}, \boldsymbol{x}_{k})=V\left(\frac{\vs_1}{a},\frac{\vs_2}{a}\right),
 \end{equation}
where $\vs_1=\vx_j-\vx_i$ and $\vs_2=\vx_k-\vx_i$.

Therefore, with displacement field $\vu=\{\vu^{A+}_{\vs},\vu^{B+}_{\vs}\}$, the intralayer interactions within the upper layer $z=+\frac{1}{2}d$ reads as
\begin{flalign}\label{eqn:elastic-atomistic}
E^{+}_{\text{a-el}}&=\sum_{\vx\in \mathbb{L}}\Bigg\{\frac{1}{6}\sum_{\langle \vs_1,\vs_2\rangle\in\mathbb{L}^*\times\mathbb{L}^* }\Bigg[V\left(\frac{\vs_1}{a}+\frac{\vu^{A+}_{\vx+\vs_1}-\vu^{A+}_{\vx}}{a},\frac{\vs_2}{a}
+\frac{\vu^{A+}_{\vx+\vs_2}-\vu^{A+}_{\vx}}{a}\right)\notag\\
&~~~+V\left(\frac{\vs_1}{a}+\frac{\vu^{B+}_{\vx+\vs_1}-\vu^{B+}_{\vx}}{a},\frac{\vs_2}{a}
+\frac{\vu^{B+}_{\vx+\vs_2}-\vu^{B+}_{\vx}}{a}\right)-2V\left(\frac{\vs_1}{a},\frac{\vs_2}{a}\right)\Bigg]\notag\\
&~~~+\frac{1}{2}\sum_{\langle\vs_1,\vs_2\rangle\in\mathbb{L}^*\times\mathbb{L}^*}
\Bigg[V\left(\frac{\vs_1}{a}+\frac{\vp}{a}
+\frac{\vu^{B+}_{\vx+\vs_1}-\vu^{A+}_{\vx}}{a},\frac{\vs_2}{a}+\frac{\vu^{A+}_{\vx+\vs_2}-\vu^{A+}_{\vx}}{a}\right)
\notag\\
&~~~+V\left(\frac{\vs_1}{a}-\frac{\vp}{a}+\frac{\vu^{A+}_{\vx+\vs_1}-u^{B+}_{\vx}}{a},\frac{\vs_2}{a}
+\frac{\vu^{B+}_{\vx+\vs_2}-\vu^{B+}_{\vx}}{a}\right)\notag\\
&~~~-V\left(\frac{\vs_1}{a}-\frac{\vp}{a},\frac{\vs_2}{a}\right)
-V\left(\frac{\vs_1}{a}+\frac{\vp}{a},\frac{\vs_2}{a}\right)\Bigg]\Bigg\},
\end{flalign}
where
\begin{equation}
    \mathbb{L}=L(\ve_i,\vo),~\mathbb{L}^*=L(\ve_i,\vo)\backslash\{\vzero\}.\label{condition}
\end{equation}
Note that in this formula, the summation with factor $\frac{1}{6}$ comes from $AAA$ and $BBB$ types three-body interactions, and the summation with factor $\frac{1}{2}$ comes from $AAB$ and $BBA$ types three-body interactions. In fact, since we use two side vectors as variables in the three-body interaction in Eq.~\eqref{eqn:3bd}, a type $AAA$ or $BBB$ three-body interaction is counted $3!$ times in the summations: each atom in such a three-body interaction can be used as the $\vx\in \mathbb{L}$ in the outer summation, and $\langle \vs_1,\vs_2\rangle$, $\langle \vs_2,\vs_1\rangle\in\mathbb{L}^*\times\mathbb{L}^*$ in the inner summation give the same three-body interaction.
Whereas for a type $AAB$ ($ABB$) interaction, we collect it by using an ``$A$" (``$B$") atom as the $\vx\in \mathbb{L}$ in the outer summation, and each of the two ``$A$" (``$B$") atoms in the $AAB$ ($ABB$) can be used as the $\vx\in \mathbb{L}$ in the outer summation, leading to a factor $2$ for this three-body interaction. These give the summation formula in Eq.~\eqref{eqn:elastic-atomistic}. Note that this method of organizing terms of intra-layer interactions on a complex lattice layer will be very convenient for the convergence proofs from atomistic model to the Peierls--Nabarro model in later sections.

We have similar formula for $E^{-}_{\text{a-el}}$, i.e., the intralayer interactions within the lower layer $z=-\frac{1}{2}d$ with displacement field $\vu=\{\vu^{A-}_{\vs},\vu^{B-}_{\vs}\}$, by using a different lattice
\begin{equation}
    \mathbb{L}=L(\ve_i,\vo+\vd),~\mathbb{L}^*=L(\ve_i,\vo+\vd)\backslash\{\vzero\}, \label{condition11}
\end{equation}
and replacing all the variables with notation ``$+$"  by those with notation ``$-$".
The total intra layer elastic energy is
\begin{equation}
E_{\text{a-el}}=E^{+}_{\text{a-el}}+E^{-}_{\text{a-el}}.
\end{equation}

Next, we describe the inter-layer interaction, which is a weak interaction, e.g. the van der Waals-like interaction. In \cite{zhou2015van},  Morse potential is used to describe the van der Waals-like interaction, which is a two-body potential:
$$
   V_\text{Morse}(r)=D_{e}\left(e^{-2 c\left(r-r_{e}\right)}-2 e^{-c\left(r-r_{e}\right)}\right),
$$
where  $r$ is the distance between two atoms, $r_{e}$ is the equilibrium bond distance, $D_{e}$ is the well depth,  and $c$ controls the width of the potential. We focus on such two-body potential for the inter-layer interaction:
\begin{equation}
V_\text{inter}\left(\vx^{+}_i, \vx^{-}_j\right)
=V_{\rm d}\left(\frac{|\vx^{+}_i- \vx^{-}_j|}{a}\right),
\end{equation}
where $\vx^{+}_i$ and $\vx^{-}_j$ are the locations of two atoms in the upper and lower layers, respectively, and the distance $|\vx^{+}_i- \vx^{-}_j|$ is the distance in three-dimensions.

%We denote the interaction between $\vy^{A-}_{\vs_1}$and $\vy^{B+}_{\vs_2}$ is $$V_\text{Morse}\left(\frac{|\vy^{A-}_{\vx}-\vy^{B+}_{\vx+\vs}|}{a}\right)=V_{d}\left(\vs+\frac{\vp-\vd}{a}\right),$$ where $\vy^{A-}_{\vx}$ denotes as the coordinate of the A atom labeled as $\vx$ located at $z=-\frac{1}{2}d$ and $\vy^{B+}_{\vx+\vs}$ denotes as the coordinate of the B atom labeled as $\vx+\vs$ located at $z=\frac{1}{2}d$.

Therefore, with displacement field $\vu=\{\vu^{A+}_{\vs},\vu^{B+}_{\vs},\vu^{A-}_{\vs},\vu^{B-}_{\vs}\}$ in the bilayer system, the inter-layer interaction energy in the bilayer system is
\begin{flalign}
E_{\text{a-mis}} &=\sum_{\vx\in\mathbb{L}}\sum_{\vs\in\mathbb{L}}\Bigg[V_{\rm d}\left(\frac{\vu^{A+}_{\vx+\vs}-\vu^{A-}_{\vx}}{a}
+\frac{\vs}{a}-\frac{\vd}{a}\right)-V_{\rm d}\left(\frac{\vs}{a}-\frac{\vd}{a}\right)\notag\\
&~~~+V_{\rm d}\left(\frac{\vu^{A+}_{\vx+\vs}-\vu^{B-}_{\vx}}{a}+\frac{\vs}{a}-\frac{\vd+\vp}{a}\right)
-V_{\rm d}\left(\frac{\vs}{a}-\frac{\vd+\vp}{a}\right)\notag\\
&~~~+V_{\rm d}\left(\frac{\vu^{B+}_{\vx+\vs}-\vu^{A-}_{\vx}}{a}+\frac{\vs}{a}-\frac{\vd-\vp}{a}\right)
-V_{\rm d}\left(\frac{\vs}{a}-\frac{\vd-\vp}{a}\right)\notag\\
&~~~+V_{\rm d}\left(\frac{\vu^{B+}_{\vx+\vs}-\vu^{B-}_{\vx}}{a}+\frac{\vs}{a}-\frac{\vd}{a}\right)
-V_{\rm d}\left(\frac{\vs}{a}-\frac{\vd}{a}\right)\Bigg],\label{eqn:a-mis}
\end{flalign}
where $\vd$ is recalled to be the vector between the centers of two layers. Here the four rows come from the $A^+A^-$, $A^+B^-$, $B^+A^-$, and $B^+B^-$ types interactions, respectively.

Note that for  simplicity of notations, we will omit $\mathbb{L}$ and $\mathbb{L}^*$ in the summations in later sections, e.g., $\sum_{\langle \vs_1,\vs_2\rangle}$ means $\sum_{\langle \vs_1,\vs_2\rangle\in\mathbb{L}^*\times\mathbb{L}^* }$.

\section{Peierls--Nabarro Model with $\gamma$-surface}

In the Peierls-Nabarro (PN) model, we may consider an edge dislocation with Burgers vector $\boldsymbol{b}=(a,0,0)$  centered at $x=z=0$, i.e., along the $y$ axis. The  locations of atoms are $\Gamma_{\text{PN}}=\{(\vx+\vu^{\pm},\pm\frac{1}{2}d),~\vx\in{\mathbb{R}^2} \}$. In the classical PN model, the disregistry (relative displacement) $\vphi(x)$ between the two layers is defined as
\begin{flalign}\label{eqn:phi1}
\vphi(\vx)=\vu^{+}(\vx)-\vu^{-}(\vx),~\vx\in{\mathbb{R}^2},
\end{flalign}
 and  satisfies the boundary conditions
 \begin{flalign}\label{eqn:phi2}
 \lim_{x\to -\infty}\vphi(x,y)=\vzero,\quad\lim_{x\to +\infty}\vphi(x,y)=\left(a,0\right).
 \end{flalign}
 In addition, the dislocation is centered at $x=z=0$, which means
 \begin{flalign}\label{eqn:phi3}
 \vphi(0,y)=\left(\frac{1}{2}a,0\right).
 \end{flalign}

For the straight edge dislocation being considered,  we have $\vu^\pm=(u^\pm,0)$ and $\vphi=(\phi,0)$.   In the classical PN model with $\gamma$-surface, the total energy of a bilayer graphene system is divided into two parts, the misfit energy (due to the interaction of two atoms from different layers) and the elastic energy (due to the interaction from the same layer).

In bilayer graphene system with $\gamma$-surface, the misfit energy is regarded as the interactions across the slip plane. Therefore we have the following formula by the atomistic model:
\begin{equation}\label{eqn:misfit00}
E_{\text{mis}}[\vphi]=\int_{\mathbb{R}^2}\gamma(\vphi(x,y))\,\D x \,\D y,
\end{equation}
where the density of this misfit energy $\gamma(\vphi)$ is the $\gamma$-surface~\cite{vitek1968intrinsic} which is defined as the energy increment per unit area. By the definition, the $\gamma$-surface can be calculated in terms of the atomistic model by \begin{align}\gamma(\vphi)&=\frac{2}{\sqrt{3}a^2}\sum_{\vs}\Bigg[V_{\rm d}\left(\frac{\vphi}{a}+\frac{\vs}{a}-\frac{\vd}{a}\right)-V_{\rm d}\left(\frac{\vs}{a}-\frac{\vd}{a}\right)\notag\\&~~~+V_{\rm d}\left(\frac{\vphi}{a}+\frac{\vs}{a}-\frac{\vd+\vp}{a}\right)-V_{\rm d}\left(\frac{\vs}{a}-\frac{\vd+\vp}{a}\right)\notag\\&~~~+V_{\rm d}\left(\frac{\vphi}{a}+\frac{\vs}{a}-\frac{\vd-\vp}{a}\right)-V_{\rm d}\left(\frac{\vs}{a}-\frac{\vd-\vp}{a}\right)\notag\\&~~~+V_{\rm d}\left(\frac{\vphi}{a}+\frac{\vs}{a}-\frac{\vd}{a}\right)-V_{\rm d}\left(\frac{\vs}{a}-\frac{\vd}{a}\right)\Bigg],\end{align}

The elastic energy in the PN model comes from the intralayer elastic interaction in the two layers. The elastic energy in the upper layer $E^+_\text{elas}$ is
\begin{align}
    E^+_\text{elas}&=\int_{\mathbb{R}^2}W(\nabla u^+)+W(\nabla u^-)\,\D x\,\D y\notag\\&=\int_{\mathbb{R}^2}\alpha_1(\partial_x u^+)^2+\alpha_2(\partial_y u^+)^2+\alpha_1(\partial_x u^-)^2+\alpha_2(\partial_y u^-)^2\,\D x\,\D y,\label{elas formula}
\end{align}
where notations $\partial_x u=\frac{\partial u}{\partial x}$, $\partial_y u=\frac{\partial u}{\partial y}$, and parameters
\begin{align}
    \alpha_1=&\frac{1}{3\sqrt{3}a^2}\sum_{\langle\vs_1,\vs_2\rangle}\sum_{i+j=2}\tbinom{2}{i}\Big\{\left[\partial_{ij}V\left(\frac{\vs_1}{a},\frac{\vs_2}{a}\right)\right]_{11}\left(\frac{\vs_1}{a}\cdot \ve_x\right)^i\left(\frac{\vs_2}{a}\cdot\ve_x\right)^j\notag\\&+\frac{3}{2}\left[\partial_{ij}V\left(\frac{\vs_1\pm \vp}{a},\frac{\vs_2}{a}\right)\right]_{11}\left[\left(\frac{\vs_1\pm \vp}{a}\right)\cdot\ve_x\right]^i\left[\left(\frac{\vs_2}{a}\right)\cdot\ve_x\right]^j\Big\},\label{eqn:alpha1}\end{align}\begin{align} \alpha_2=&\frac{1}{3\sqrt{3}a^2}\sum_{\langle\vs_1,\vs_2\rangle}\sum_{i+j=2}\tbinom{2}{i}\Big\{\left[\partial_{ij}V\left(\frac{\vs_1}{a},\frac{\vs_2}{a}\right)\right]_{11}\left(\frac{\vs_1}{a}\cdot \ve_y\right)^i\left(\frac{\vs_2}{a}\cdot\ve_y\right)^j\notag\\&+\frac{3}{2}\left[\partial_{ij}V\left(\frac{\vs_1\pm \vp}{a},\frac{\vs_2}{a}\right)\right]_{11}\left[\left(\frac{\vs_1\pm \vp}{a}\right)\cdot\ve_y\right]^i\left[\left(\frac{\vs_2}{a}\right)\cdot\ve_y\right]^j\Big\},\label{eqn:alpha2}\\ \ve_x=&(1,0),\ \ve_y=(0,1).\notag
\end{align}
Here $[\mA]_{11}$ is the $(1,1)$-entry of the matrix $\mA\in \mathbb{R}^{2\times2}$, and also for  simplicity of notations, for  $V(\pmb \rho_1,\pmb \rho_2)$ with $\pmb \rho_1=(\rho_{11},\rho_{12})$ and $\pmb \rho_2=(\rho_{21},\rho_{22})$, we denote
\begin{align}
&{\textstyle \partial_{1}V=(\frac{\partial V}{\partial{\rho_{11}}},\frac{\partial V}{\partial{\rho_{12}}})^\T,~\partial_{2}V=(\frac{\partial V}{\partial{\rho_{21}}},\frac{\partial V}{\partial{\rho_{22}}})^\T},\notag\\&\partial_{20}V=\begin{bmatrix}
\frac{\partial^2 V}{\partial{\rho_{11}}^2}& \frac{\partial^2 V}{\partial \rho_{11}\partial \rho_{12}}\vspace{1ex}\\
\frac{\partial^2 V}{\partial{\rho_{12}}\partial \rho_{11}}& \frac{\partial^2 V}{\partial{\rho_{12}}^2}
\end{bmatrix},~\partial_{11}V=\begin{bmatrix}
\frac{\partial^2 V}{\partial{\rho_{11}}\partial \rho_{21}} & \frac{\partial^2 V}{\partial{\rho_{11}}\partial \rho_{22}} \vspace{1ex}\\ \frac{\partial^2 V}{\partial{\rho_{12}}\partial \rho_{21}} & \frac{\partial^2 V}{\partial{\rho_{12}}\partial \rho_{22}}
\end{bmatrix}, \notag\\&\partial_{02}V=\begin{bmatrix}
\frac{\partial^2 V}{\partial{\rho_{21}}^2}&\frac{\partial^2 V}{\partial{\rho_{21}}\partial \rho_{22}} \vspace{1ex}\\ \frac{\partial^2 V}{\partial{\rho_{22}}\partial \rho_{21}}  & \frac{\partial^2 V}{\partial{\rho_{22}}^2}
\end{bmatrix}.\notag
\end{align}

In fact, by using Taylor expansion in the elastic energy in the atomistic model $E_{\text{a-el}}^+$ in Eq.~\eqref{eqn:elastic-atomistic},
 we can get the following leading order approximation of the continuum elastic energy in the upper layer:
\begin{align}
&E^+_{\text{elas}}=\frac{2}{\sqrt{3}a^4}\int_{\mathbb{R}^2}\sum_{\langle\vs_1,\vs_2\rangle}\sum_{i+j=2}\tbinom{2}{i}\Big\{\frac{1}{6}\partial_{ij}V\left(\frac{\vs_1}{a},\frac{\vs_2}{a}\right)
\big[(\vs_1\cdot\nabla)\vu^+\big]^i\big[(\vs_2\cdot\nabla)\vu^+\big]^j\notag\\&+\frac{1}{4}\partial_{ij}V\left(\frac{\vs_1\pm \vp}{a},\frac{\vs_2}{a}\right)\big[(\vs_1\pm \vp)\cdot(\nabla \vu^+)\big]^i\big[(\vs_2)\cdot(\nabla \vu^+)\big]^j\Big\}\,\D x\,\D y.\label{elas no}\end{align}
This leading order approximation is easily obtained by using the approximations
 \begin{align}\frac{\vu^{A+}_{\vx+\vs_1}-\vu^{A+}_{\vx}}{a}&\approx \left(\frac{\vs_1}{a}\cdot\nabla\right)\vu^+,\ \frac{\vu^{B+}_{\vx+\vs_1}-\vu^{B+}_{\vx}}{a}\approx \left(\frac{\vs_1}{a}\cdot\nabla\right)\vu^+\notag\\\frac{\vu^{A+}_{\vx+\vs_1}-\vu^{B+}_{\vx}}{a}&\approx \left(\frac{\vs_1-\vp}{a}\cdot\nabla\right)\vu^+,\ \frac{\vu^{B+}_{\vx+\vs_1}-\vu^{A+}_{\vx}}{a}\approx \left(\frac{\vs_1+\vp}{a}\cdot\nabla\right)\vu^+,\notag\end{align}
and symmetry of the lattice.
For simplicity of notations, in Eqs.~\eqref{elas no}, we have used the expression
 \begin{align}
\partial_{ij} V(\pmb \rho_1,\pmb \rho_2)\vxi_1\vxi_2:=\vxi_1 \partial_{ij} V(\pmb \rho_1,\pmb \rho_2) \vxi_2^\T, \ \text{ for }\vxi_i\in\mathbb{R}^2.
\notag
\end{align}

The formula of the elastic energy in the lower layer $E^-_\text{elas}$ in the form of Eq.~\eqref{elas no} is the same except that $\vs_1,\vs_2\in \mathbb{L}^* $ with $\mathbb{L}^* $ being defined in Eq.~\eqref{condition11} instead of Eq.~\eqref{condition}, and all the variables with notation ``$+$" are replaced by those with notation ``$-$".

The two forms of elastic energy densities in Eqs.~\eqref{elas formula} and \eqref{elas no} are identical. Calculation from Eq.~\eqref{elas no} to Eq.~\eqref{elas formula} will be summarized as a lemma below. (Same for the calculation for the corresponding two forms of $E^-_\text{elas}$.) In the PN model part (Theorem \ref{Theorem 1}), we use formula \eqref{elas formula}. In the atomistic model part (consistency of PN model and stability of atomistic model), we use formula \eqref{elas no}.

\newtheorem{mylem}{Lemma}
\begin{mylem}\label{no xy}
The two forms of elastic energy densities in Eqs.~\eqref{elas formula} and \eqref{elas no} are identical.
 \end{mylem}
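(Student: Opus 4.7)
The plan is to substitute $\vu^\pm = (u^\pm, 0)$ into Eq.~\eqref{elas no} and show, pointwise in $(x,y)$, that the resulting integrand equals that of Eq.~\eqref{elas formula}. Because the $+$ and $-$ layer contributions are structurally identical (only the base lattice $\mathbb{L}^*$ differs, via Eqs.~\eqref{condition}--\eqref{condition11}), it suffices to treat the $u^+$ part and repeat verbatim for $u^-$.

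The first observation is that with $\vu^+ = (u^+, 0)$, each vector $(\vs\cdot\nabla)\vu^+ = \bigl((\vs\cdot\nabla) u^+,\, 0\bigr)$ is aligned with $\ve_x$, so every tensor contraction $\partial_{ij}V\cdot\vxi_1\vxi_2^\top$ in Eq.~\eqref{elas no} collapses to $[\partial_{ij}V]_{11}$ times the scalar $[(\vs_1\cdot\nabla)u^+]^i[(\vs_2\cdot\nabla)u^+]^j$. I would then decompose $(\vs\cdot\nabla) u^+ = (\vs\cdot\ve_x)\partial_x u^+ + (\vs\cdot\ve_y)\partial_y u^+$ and multiply out the $i+j=2$ products, splitting the integrand into three families of terms proportional to $(\partial_x u^+)^2$, $(\partial_y u^+)^2$, and the cross term $\partial_x u^+\,\partial_y u^+$; denote the coefficient of the latter by $\beta$.

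The heart of the proof is to show $\beta = 0$. The three-body potential $V$ depends only on $|\pmb\rho_1|$, $|\pmb\rho_2|$, and the angle between its two arguments, so it is invariant under any Euclidean reflection applied simultaneously to both arguments; moreover $\mathbb{L}^* = L(\ve_i,\vo)\setminus\{\vzero\}$ is preserved by the reflection $\sigma_y:(x,y)\mapsto(x,-y)$. For the AAA/BBB contributions (prefactor $\tfrac{1}{6}$), the change of variables $\vs_i \mapsto \sigma_y\vs_i$ fixes $[\partial_{ij}V]_{11}$ and every $(\vs\cdot\ve_x)$-projection while flipping the sign of each $(\vs\cdot\ve_y)$-projection; since every cross-term summand carries an odd total number of $\ve_y$-projections, each such contribution negates under the substitution and the AAA/BBB part of $\beta$ vanishes. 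The AAB/ABB contributions (prefactor $\tfrac{1}{4}$, containing $\vs_1 \pm \vp$) are more delicate because $\sigma_y \vp = \ve_1 - \vp \ne \pm \vp$; I would pair the $+\vp$ and $-\vp$ summands using the honeycomb symmetry $\sigma_x:(x,y)\mapsto(-x,y)$ together with the translation $\vs_1 \mapsto \vs_1 + \ve_1$, so that the two branches swap into one another with compensating sign changes on the $(\ve_x,\ve_y)$-projections, antisymmetrizing their contribution to zero.

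Once $\beta = 0$, reading off the coefficients of $(\partial_x u^+)^2$ and $(\partial_y u^+)^2$ is direct bookkeeping: the outer prefactor $\tfrac{2}{\sqrt{3}a^4}$ combined with the factor of $a^2$ carried by $(\vs_1 \cdot \ve_x)^i(\vs_2 \cdot \ve_x)^j$ yields $\tfrac{2}{\sqrt{3}a^2}$, which multiplied by the $\tfrac{1}{6}$ (respectively $\tfrac{1}{4}$) in front of the AAA/BBB (respectively AAB/ABB) sum reproduces the $\tfrac{1}{3\sqrt{3}a^2}$ prefactor and the $\tfrac{3}{2}$ coefficient appearing in Eq.~\eqref{eqn:alpha1}; the same calculation with $\ve_x$ replaced by $\ve_y$ produces $\alpha_2$ as in Eq.~\eqref{eqn:alpha2}. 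The analogous calculation for $u^-$ on the lower-layer lattice gives the remaining terms. I expect the main obstacle to be the AAB/ABB cross-term cancellation: because $\vp$ is not preserved by the straightforward $\sigma_y$ reflection, the two $\pm\vp$ branches must be paired via a less immediate symmetry of the honeycomb (together with a shift of $\vs_1$ by a lattice vector), and care is needed to verify that the combined substitution really does antisymmetrize the mixed sum.
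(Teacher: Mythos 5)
Your proof is correct in substance but takes a genuinely different route from the paper. The paper kills the cross term $\partial_x u\,\partial_y u$ by exploiting the $3$-fold rotational symmetry: for each three-body configuration it explicitly sums the contributions of the configuration and its two rotations by $2\pi/3$ and $4\pi/3$ about the base atom, and verifies by direct algebra that the mixed term cancels in each such triple (separately for the $\partial_{20}V$, $\partial_{11}V$, $\partial_{02}V$ pieces, and for the $AAB$ case with the rotated $AB$ vectors $c_2\ve_1-(c_1+c_2+1)\ve_2+\vp$, etc.). You instead use mirror symmetries and a parity argument: the cross term carries an odd number of $\ve_y$-projections (equivalently, of $\ve_x$-projections), so any lattice-preserving reflection in a coordinate axis negates it. This is arguably cleaner, and it has one technical advantage: for a coordinate-axis reflection $\sigma$ one has $[\sigma M\sigma^\top]_{11}=M_{11}$, so the invariance of $[\partial_{ij}V]_{11}$ under the change of variables is immediate, whereas for a $2\pi/3$ rotation the $(1,1)$-entry is not preserved entry-wise and the paper's identification of a common factor $\lambda_1$ across the three rotated configurations is really a statement about the full tensor contraction. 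Both arguments rely on the same implicit hypothesis, not contained in A2, that $V$ is invariant under the relevant orthogonal transformation applied to both arguments (true for the stated Stillinger--Weber form, which depends only on two lengths and an angle).

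One detail in your treatment of the $AAB/ABB$ terms needs correcting. With $\vp=a(\tfrac12,\tfrac{\sqrt3}{6})$ one has $\sigma_x\vp=\vp-\ve_1$ and $\sigma_y\vp=\ve_1-\vp$. Consequently $\sigma_x$ composed with the lattice translation maps each of the $+\vp$ and $-\vp$ branches to \emph{itself} (via $\vs_1\mapsto\sigma_x\vs_1\mp\ve_1$), it does not swap them; it is $\sigma_y$ together with $\vs_1\mapsto\sigma_y\vs_1+\ve_1$ that exchanges the two branches. Either version finishes the proof --- $\sigma_x$ flips the $\ve_x$-projections and annihilates the cross term branch by branch, while $\sigma_y$ flips the $\ve_y$-projections and annihilates it on the union of the two branches --- but as written your pairing statement attributes the branch swap to the wrong reflection. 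You should also note the minor bookkeeping caused by the exclusion of $\vzero$ from $\mathbb{L}^*$ (the affine maps $\vs_1\mapsto\sigma\vs_1\pm\ve_1$ are bijections of $\mathbb{L}$ but not of $\mathbb{L}^*$); this is harmless but worth a sentence. The prefactor bookkeeping in your last paragraph matches Eqs.~\eqref{eqn:alpha1}--\eqref{eqn:alpha2} and is fine.
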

 \begin{proof}

 We derive Eq.~\eqref{elas formula} from Eq.~\eqref{elas no} by direct calculation.  It is easy to see that the coefficients of $(\partial_x u)^2$ and $(\partial_y u)^2$  in Eq.~\eqref{elas formula} are the same as those in Eq.~\eqref{elas no}. It remains to show that there is no $\partial_x u \partial_y u$ term in Eq.~\eqref{elas no}, which will be done using lattice symmetry as follows.

Recall that Eq.~\eqref{elas no} is obtained  by Taylor expansions in the elastic energy in the atomistic model in Eq.~\eqref{eqn:elastic-atomistic}. In the atomistic model, we divide the elastic interactions within this layer into four types of three-body interactions: $AAA$, $BBB$, $AAB$, and $ABB$. Recall that here we use three-body potential.
 The three atoms in a three-body interaction form a triangle; see Fig.~\ref{aaa-aab}. Without loss of generality, suppose that the point being considered is an $A$-atom, denoted by $T$. In the elastic energy in the atomistic model in Eq.~\eqref{eqn:elastic-atomistic}, when  $\vx=T$ in the outer summation, the three-body interactions in the summation includes only those of $AAA$ and $AAB$ types that contain the atom $T$; see Fig.~\ref{aaa-aab}. The interactions among such a triangle is expressed in terms of the two edge vectors $\vs_1$ and $\vs_2$ starting from the atom $T$ as given in the inner summations in Eq.~\eqref{eqn:elastic-atomistic}. Recall that the interactions of $ABB$ and $BBB$ types are included in summations with a $B$-atom in the outer summation in Eq.~\eqref{eqn:elastic-atomistic}.

\begin{figure}[htbp]
\centering
\includegraphics[width=\textwidth]{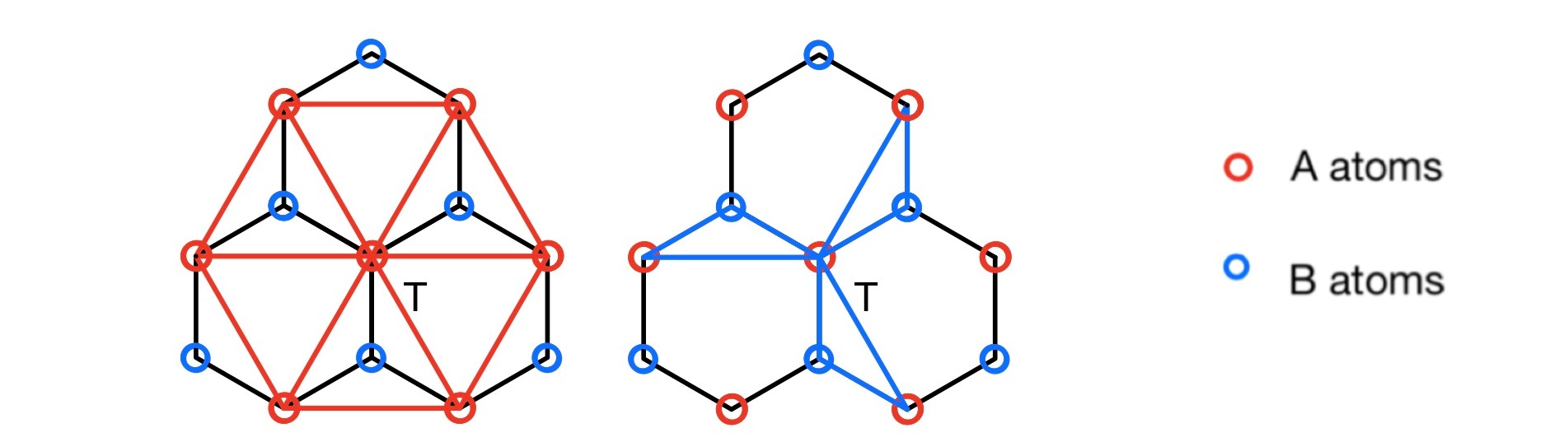}
	\caption{Different types of three-body interactions in a layer. Left: AAA, right: AAB. Types BBB and ABB are similar.}
	\label{aaa-aab} %framework,fig1
\end{figure}

The complex lattice being considered has $3$-fold rotational symmetry, i.e., it does not change after rations of $2\pi/3$ and $4\pi/3$.
For an AA edge vector starting from the atom $T$, e.g.,  $\vs_1=b_1\ve_1+b_2\ve_2=a\left(b_1+\frac{1}{2}b_2,\frac{\sqrt{3}}{2}b_2\right)$, where $b_1,b_2\in \mathbb{Z}$, it becomes $b_2\ve_1-(b_1+b_2)\ve_2$ and $-(b_1+b_2)\ve_1+b_1\ve_2$ after rotations of $2\pi/3$ and $4\pi/3$ around atom $T$, respectively.

Consider an $AAA$ interaction associated with the atom $T$.
Assume that the two AA edge vectors starting from atom $T$ are $$\vs_1= b_1\ve_1+b_2\ve_2,~~\vs_2= b_3\ve_1+b_4\ve_2.$$
The three-body interaction among this $AAA$ triangle is included in the summation in the first line in Eq.~\eqref{eqn:elastic-atomistic}. Due to the $3$-fold rotational symmetry, interactions among AAA triangles obtained by $2\pi/3$ and $4\pi/3$ rotations from this one are also included in this summation.

Recall that the continuum model in Eq.~\eqref{elas no} is obtained by leading order approximation from the atomistic model in Eq.~\eqref{eqn:elastic-atomistic}.  We divide the first term in Eq.~\eqref{elas no} for the $AAA$ interactions into three parts that contain the terms
$\partial_{20} V\left(\frac{\vs_1}{a},\frac{\vs_2}{a}\right),\partial_{11} V\left(\frac{\vs_1}{a},\frac{\vs_2}{a}\right)$ and $\partial_{02} V\left(\frac{\vs_1}{a},\frac{\vs_2}{a}\right)$, respectively.
For the $\partial_{20} V\left(\frac{\vs_1}{a},\frac{\vs_2}{a}\right)$ term, due to the  $3$-fold rotational symmetry, its coefficient can be written as
 \begin{align}
     &\lambda_1\Bigg\{\left[\left(b_1+\frac{1}{2}b_2\right)\partial_x u+\frac{\sqrt{3}}{2}b_2\partial_y u\right]^2+\left[\left(b_2+\frac{1}{2}b_1\right)\partial_x u-\frac{\sqrt{3}}{2}b_1\partial_y u\right]^2\notag\\&+\left[\left(\frac{1}{2}b_2-\frac{1}{2}b_1\right)\partial_x u-\frac{\sqrt{3}}{2}\left(b_1+b_2\right)\partial_y u\right]^2\Bigg\}\notag\\=&\lambda_{11}\left(\partial_x u\right)^2+\lambda_{12}\left(\partial_y u\right)^2,\notag
 \end{align}
 where  $\lambda_1,~\lambda_{11},~\text{and }\lambda_{12}$ are independent
  of $\partial_x u$ and $\partial_y u$.
 For the term $\partial_{11} V\left(\frac{\vs_1}{a},\frac{\vs_2}{a}\right)$, for the same reason, its coefficient can be written as
 \begin{align}
    &\lambda_2\Bigg\{\left[\left(b_1+\frac{1}{2}b_2\right)\partial_x u+\frac{\sqrt{3}}{2}b_2\partial_y u\right] \left[\left(b_4+\frac{1}{2}b_3\right)\partial_x u-\frac{\sqrt{3}}{2}b_3\partial_y u\right]\notag\\&+\left[\left(b_2+\frac{1}{2}b_1\right)\partial_x u-\frac{\sqrt{3}}{2}b_1\partial_y u\right]\left[\left(\frac{1}{2}b_4-\frac{1}{2}b_3\right)\partial_x u-\frac{\sqrt{3}}{2}(b_3+b_4)\partial_y u\right]\notag\\&-\left[\left(\frac{1}{2}b_2-\frac{1}{2}b_1\right)\partial_x u-\frac{\sqrt{3}}{2}(b_1+b_2)\partial_y u\right]\left[\left(b_3+\frac{1}{2}b_4\right)\partial_x u+\frac{\sqrt{3}}{2}b_4\partial_y u\right]\Bigg\}\notag\\=&\lambda_{21}\left(\partial_x u\right)^2+\lambda_{22}\left(\partial_y u\right)^2, \notag
 \end{align}
 where $\lambda_2,~\lambda_{21},~\text{and }\lambda_{22}$ are independent
  of $\partial_x u$ and $\partial_y u$.
   The coefficient of    $V_{02}\left(\frac{\vs_1}{a},\frac{\vs_2}{a}\right)$ is the same as that of  $\partial_{20} V\left(\frac{\vs_1}{a},\frac{\vs_2}{a}\right)$ given above except that $b_1$ and $b_2$ are replaced by $b_3$ and $b_4$.

    Thus we know that the $u_xu_y$ term vanishes in Eq.~\eqref{elas no} for all the $AAA$ interactions. We can also show similarly that  the $\partial_x u \partial_y u$ term vanishes in Eq.~\eqref{elas no} for all the $AAB$ interactions. A slight difference in the calculation is that for the $AB$ edge vector starting from the atom, it is $c_1\ve_1+c_2\ve_2+\vp$, where $c_1,c_2\in \mathbb{Z}$, and it becomes  $c_2\ve_1-(c_1+c_2+1)\ve_2+\vp$ and $-(c_1+c_2+1)\ve_1+c_1\ve_2+\vp $ after rotations of $2\pi/3$ and $4\pi/3$ around atom $T$.

\end{proof}

\section{Rescaling}

Due to the fact that the inter-layer van der Waals-like interaction is much weaker than the intra-layer  covalent-bond interaction in a bilayer graphene~\cite{dai2016structure}, i.e., ${V_{\rm d}} \ll {V}$ in the atomistic model, we assume that
$$V_{\rm d}/V=O(\varepsilon ^2),$$
where $\varepsilon$ is a dimensionless small parameter.
 In the PN model of bilayer graphene system, the intra-layer  covalent-bond interaction gives the elastic energy $E_\text{elas}$, and the inter-layer  van der Waals-like interaction gives the misfit energy $E_\text{mis}$. Following the convergence analysis for simple lattices in Ref.~\cite{luo2018atomistic}, we define the small parameter $\varepsilon$ based on the PN model as
\begin{equation}
\varepsilon=\sqrt{\frac{a^2\gamma_{xx}(\vzero)}{\sqrt{\alpha_1\alpha_2}}},
\end{equation}
and assume that
\begin{equation}
\varepsilon\ll 1.
\end{equation}

Using $a/\varepsilon$ as the  length unit for the spatial variable $x$ and $a$ as the length unit for the displacement,  we define the rescaled quantities:$$\bar{\vx}=\frac{\varepsilon \vx}{a},\ \bar{\vu}^{ \pm}=\frac{\vu^{ \pm}}{a},\ \bar{\vphi}=\frac{\vphi}{a}, \ \bar{s}_i=\frac{s_i}{a}, \ \bar{p}=\frac{p}{a},
\ \bar{d}=\frac{d}{a},$$
$$\bar{\alpha_i}=a^2\alpha_i,\qquad \bar{\gamma}(\bar{\vphi})=a^2\gamma(\vphi).$$

The elastic energy in the PN model in Eq.~\eqref{elas no} becomes
\begin{align}
E^+_{\text{elas}}=&\frac{2}{\sqrt{3}}\int_{\mathbb{R}^2}\sum_{\langle\bar{\vs}_1,\bar{\vs}_2\rangle}\sum_{i+j=2}
\tbinom{2}{i}\Big\{\frac{1}{6}\partial_{ij}V(\bar{\vs}_1,\bar{\vs}_2)
\big[(\bar{\vs}_1\cdot\nabla)\bar{\vu}^+\big]^i\big[(\bar{\vs}_2\cdot\nabla)\bar{\vu}^+\big]^j\notag\\
&+\frac{1}{4}\partial_{ij}V(\bar{\vs}_1\pm \bar{\vp},\bar{\vs}_2)\big[(\bar{\vs}_1\pm \bar{\vp})\cdot(\nabla \bar{\vu}^+)\big]^i\big[(\bar{\vs}_2)\cdot(\nabla \bar{\vu}^+)\big]^j\Big\}\,\D \bar{x}\,\D \bar{y},\label{elas rescale}\end{align}
and similar formula for $E^-_{\text{elas}}$.
The misfit energy in Eq.~\eqref{eqn:misfit00} in the PN model becomes
\begin{equation}
E_{\text{mis}} = \int_{\mathbb{R}^2} \bar{\gamma}(\bar{\vphi})\,\D \bar{x}\,\D \bar{y}\label{mis rescale},
\end{equation}
where
\begin{flalign}
\bar{\gamma}(\bar{\vphi})\notag=&\frac{2}{\sqrt{3}}
\sum_{\vs}\Big[U(\bar{\vphi}+\bar{\vs}-\bar{\vd})-U(\bar{\vs}-\bar{\vd})\notag\\
&+U(\bar{\vphi}+\bar{\vs}-\bar{\vd}+\bar{\vp})
-U(\bar{\vs}-\bar{\vd}+\bar{\vp})\notag\\
&+U(\bar{\vphi}+\bar{\vs}-\bar{\vd}-\bar{\vp})-U(\bar{\vs}-\bar{\vd}-\bar{\vp})\notag\\
&+U(\bar{\vphi}+\bar{\vs}-\bar{\vd})-U(\bar{\vs}-\bar{\vd})\Big].
\end{flalign}
Here, we have used the notation
\begin{equation}
U=\varepsilon^{-2}V_{\rm d}.
\end{equation}

Using the above notations and the intra-layer interaction energy in Eq.~\eqref{eqn:elastic-atomistic}
and the inter-layer interaction energy in Eq.~\eqref{eqn:a-mis} in the atomistic model, the total energy in the atomistic model  is
\begin{align}
E_{\text{a}}[\bar{\vu}]=\sum_{\bar{\vx}}e_{\bar{\vx}}[\bar{\vu}],
\end{align}
where
\begin{align}
e_{\bar{\vx}}[\bar{\vu}]
%=&{E^+_{\text{a-el}}}+ {E^-_{\text{a-el}}} +{E_{\text{a-mis}}}\notag\\
:=&\frac{1}{6}\sum_{\langle\bar{\vs}_1,\bar{\vs}_2\rangle}
\big[V(\bar{\vs}_1+{\bar{\vu}^{A\pm}_{\bar{\vx}+\bar{\vs}_1}-\bar{\vu}^{A\pm}_{\bar{\vx}}},\bar{\vs}_2
+{\bar{\vu}^{A\pm}_{\bar{\vx}+\bar{\vs}_2}
-\bar{\vu}^{A\pm}_{\bar{\vx}}})\notag\\
&+V(\bar{\vs}_1+{\bar{\vu}^{B\pm}_{\bar{\vx}+\bar{\vs}_1}-\bar{\vu}^{B\pm}_{\bar{\vx}}},\bar{\vs}_2
+{\bar{\vu}^{B\pm}_{\bar{\vx}+\bar{\vs}_2}-\bar{\vu}^{B\pm}_{\bar{\vx}}})-2V_3(\bar{\vs}_1,\bar{\vs}_2)\big]\notag\\
&+\frac{1}{2}\sum_{\langle\bar{\vs}_1,\bar{\vs}_2\rangle}\big[V(\bar{\vp}+\bar{\vs}_1
+{\bar{\vu}^{B\pm}_{\bar{\vx}+\bar{\vs}_1}
-\bar{\vu}^{A\pm}_{\bar{\vx}}},\bar{\vs}_2+{\bar{\vu}^{A\pm}_{\bar{\vx}+\bar{\vs}_2}-\bar{\vu}^{A\pm}_{\bar{\vx}}})\notag\\
&+V(-\bar{\vp}+\bar{\vs}_1+{\bar{\vu}^{A\pm}_{\bar{\vx}+\bar{\vs}_1}
-\bar{\vu}^{B\pm}_{\bar{\vx}}},\bar{\vs}_2+{\bar{\vu}^{B\pm}_{\bar{\vx}+\bar{\vs}_2}
-\bar{\vu}^{B\pm}_{\bar{\vx}}})\notag\\
&-V(-\bar{\vp}+\bar{\vs}_1,\bar{\vs}_2)-V(\bar{\vp}+\bar{\vs}_1,\bar{\vs}_2)\big]\notag\\
&+\varepsilon^2\sum_{\bar{\vs}}\big[U({\bar{\vu}^{A+}_{\bar{\vx}+\bar{\vs}}-\bar{\vu}^{A-}_{\bar{\vx}}}
+\bar{\vs}-\bar{\vd})-U(\bar{\vs}-\bar{\vd})\notag\\
&+U({\bar{\vu}^{A+}_{\bar{\vx}+\bar{\vs}}-\bar{\vu}^{B-}_{\bar{\vs}_1}}+\bar{\vs}-\bar{\vd}+\bar{\vp})
-U(\bar{\vs}-\bar{\vd}+\bar{\vp})\notag\\
&+U({\bar{\vu}^{B+}_{\bar{\vx}+\bar{\vs}}-\bar{\vu}^{A-}_{\bar{\vx}}}+\bar{\vs}-\bar{\vd}-\bar{\vp})
-U(\bar{\vs}-\bar{\vd}-\bar{\vp})\notag\\
&+U({\bar{\vu}^{B+}_{\bar{\vx}+\bar{\vs}}-\bar{\vu}^{B-}_{\bar{\vx}}}+\bar{\vs}-\bar{\vd})-U(\bar{\vs}-\bar{\vd})\big].
\label{at re}
\end{align}

%According to the expressions of PN model \eqref{elas rescale} and \eqref{mis rescale}, we notice that the total energy of PN model is $O(1)$ as $\varepsilon\to 0$.

For simplicity, we will still use letters without bar for the rescaled variables in later sections.

\section{Total Energy Revisited and Assumptions}

\subsection{Total Energy Revisited}

Although we are interested in a straight dislocation in the continuum model, the atomic structure is not uniform along the dislocation using the atomistic model due to the complex lattice. Therefore, we need to consider two-dimensional models for the slip plane of the dislocation instead of one-dimensional models in the convergence from the atomistic model to the continuum, PN model. We will show that the dislocation in equilibrium is a straight line in the PN model (in Theorem \ref{Theorem 1}).
%The calculation in \cite{luo2018atomistic} shows the energy is finite in $\mathbb{R}$. Hence the definition of the edge  dislocation shows that the displacement field is supposed to be uniform in $y$-direction, which is proved in Theorem \ref{Theorem 1}. Therefore we redefine the energy of two models in order to avoid the infinite energy in $\mathbb{R}^2$. In the \cite{gao2019mathematical}, the total energy in PN model is finite by just subtracting a test function.
Here we define the total energy in the PN model as
\begin{equation}
    E_\text{PN}[\vu]=\lim_{R\to+\infty}\frac{1}{2R}\int_{-R}^{R} \D y\int_{\mathbb{R}}W(\nabla u^+)+W(\nabla u^-)+\gamma(\vphi)\,\D x\,\label{re PN}
\end{equation}
where  $W(\mA)$ is defined in \eqref{elas formula}.

Accordingly,  we redefine the total energy in atomistic model as
\begin{align}
E_{\text{a}}[\vu]:=&\lim_{R\to+\infty}\frac{1}{2R}\sum_{\vx\in\mathbb{L}_{R}}e_{\vx}[\vu],
%\Bigg\{\frac{1}{6}\sum_{\langle\vs_1,\vs_2\rangle}\big[V(\vs_1+{\vu^{A\pm}_{\vx+\vs_1}
%-\vu^{A\pm}_{\vx}},\vs_2+{\vu^{A\pm}_{\vx+\vs_2}-\vu^{A\pm}_{\vx}})\notag\\&+V(\vs_1+{\vu^{B\pm}_{\vx+\vs_1}
%-\vu^{B\pm}_{\vx}},\vs_2+{\vu^{B\pm}_{\vx+\vs_2}-\vu^{B\pm}_{\vx}})-2V_3(\vs_1,\vs_2)\big]\notag\\
%&+\frac{1}{2}\sum_{\langle\vs_1,\vs_2\rangle}\big[V(\vp+\vs_1+{\vu^{B\pm}_{\vx+\vs_1}
%-\vu^{A\pm}_{\vx}},\vs_2+{\vu^{A\pm}_{\vx+\vs_2}-\vu^{A\pm}_{\vx}})\notag\\&+V(-\vp+\vs_1
%+{\vu^{A\pm}_{\vx+\vs_1}-\vu^{B\pm}_{\vx}},\vs_2+{\vu^{B\pm}_{\vx+\vs_2}-\vu^{B\pm}_{\vx}})\notag\\
%&-V(-{\vp}+\vs_1,\vs_2)-V(\vp+\vs_1,\vs_2)\big]\notag\\&+\varepsilon^2\sum_{\vs}
%\big[U({\vu^{A+}_{\vx+\vs}-\vu^{A-}_{\vx}}+\vs-{\vd})-U(\vs-{\vd})\notag\\&+U({\vu^{A+}_{\vx+\vs}
%-\vu^{B-}_{\vx}}+\vs-{\vd+\vp})-U(\vs-{\vd+\vp})\notag\\&+U({\vu^{B+}_{\vx+\vs}-u^{A-}_{\vx}}+\vs-{\vd-\vp})
%-U(\vs-{\vd-\vp})\notag\\&+U({\vu^{B+}_{\vx+\vs}-\vu^{B+}_{\vx}}+\vs-{\vd})-U(\vs-{\vd})\big]\Bigg\}
\label{at red}
\end{align}
where $e_{\vx}[\vu]$ is defined in \eqref{at re} (with bars being omitted for simplicity of notations as remarked above), and $\mathbb{L}_{R}$ is a truncation of $\mathbb{L}$:
\begin{equation}
  \mathbb{L}_{R}:=\big\{\vs=(x,y)\in\mathbb{L}\mid|y|\le R\big\}.\label{reset}
\end{equation}

\subsection{Notations}
We collect some notations for the rest of the paper here.
First, as discussed in the previous two sections, we assume that the displacement vector is always in the $x$ direction due to the straight edge dislocation in both the atomistic model and the PN model, and focus on the $x$-component of the displacement. Sometimes, when the interaction potential $V(\cdot,\cdot)$ defined in Eq.~\eqref{eqn:3bd} is used, it is more convenient to use the vector displacement, and in this case, we will use the vector $\mathbf u=(u,0)$ instead of the scalar displacement component $u$, in both the atomistic model and the PN model.

	For convenience, we denote  $\mathbb{L}_{\vp}=\mathbb{L}\cup(\mathbb{L}+\{\vp\})$, where $\mathbb{L}$ is defined in \eqref{condition}. Then we define the difference operators $D_{\vs}$ and $D_{\vs}^{\vp}$ for $f$ on $\mathbb{L}_{\vp}$: \begin{equation}
	D_{\vs} f(\vx)= \frac{f(\vx+\varepsilon\vs)-f(\vx)}{\varepsilon},~ D^{\pm\vp}_{\vs} f(\vx)= \frac{f(\vx+\varepsilon\vs\pm\varepsilon\vp)-f(\vx)}{\varepsilon}.\label{D}
	\end{equation}
	where $\vs\in \sL$ defined in (\ref{condition}) and $\vp=\left(\frac{1}{2},\frac{3}{6}\right)$. Moreover, for functions defined on $\sL_p$, we denote \begin{equation}
	f_{\vs}^A:=f(\varepsilon\vs),~f_{\vs}^B:=f(\varepsilon\vs+\varepsilon\vp).
	\end{equation}
	
	Now we introduce discrete Sobolev space $$H_\varepsilon^1=H_\varepsilon^1(\sL_{\vp})=\{f\mid \|f\|_{1,\varepsilon}<\infty\}$$ with \begin{align}\|f\|_{1,\varepsilon}^2=&\lim_{R\to+\infty}\frac{\varepsilon^2}{2R}\sum_{\vx\in\mathbb{L}_{R}}
\left(\sum_{\|\vs\|\le 1}\left|D_{\vs}f^A_{\vx}\right|^2+\left|D^{\vp}_{\vs}f^A_{\vx}\right|^2+\left|D_{\vs}f^B_{\vx}\right|^2
+\left|D_{\vs}^{-\vp}f^B_{\vx}\right|^2\right)\notag\\
&+\lim_{R\to+\infty}\frac{\varepsilon^2}{2R}\sum_{\vx\in\mathbb{L}_{R}}\left(\sum_{\|\vs\|\le 1}\left|f_{\vs}^B\right|^2+\left|f_{\vs}^A\right|^2\right)\notag.\end{align}
	Similarly, we define the continuum Sobolev space on $\sR^2$ $$H_e^1=H_e^1(\sR^2)=\{f\mid\|f\|_{1}\le\infty\}$$ with $$\|f\|_1^2=\lim_{R\to+\infty}\frac{1}{2R}\int_{-R}^{R}\int_{\mathbb{R}}
|\partial_x f|^2+|\partial_y f|^2+|f|^2\,\D x\,\D y.$$
%and $\partial_y f$ is to take the partial derivative in the $y$-direction for each component of $\vf$.
	
	For simplicity of notations, we write $f=\{f^+,f^-\}\in H_\varepsilon^1$ if $f^\pm\in H_\varepsilon^1$ and  $f=\{f^+,f^-\}\in H^1_e$ if $f^\pm\in H^1_e$. If $f=\{f^+,f^-\}$, we denote
	\begin{align}
		f^\perp=f^+-f^-.
	\end{align}

	Now we define the test function space $X_0\subset H^1_e$ for the PN model on $\{\sR^2,\sR^2\}$ and the test function space $X_\varepsilon\subset H^1_\varepsilon$ for atomistic model on $\{\sL_{\vp},\sL_{\vp}\}$:
	\begin{align}
		X_0=\{f\in H^1_e\mid\|f\|_{X_0}<+\infty,~f^\pm(0,y)=0,~\lim_{x\to\pm\infty}f^\pm(x,y)=0\},
	\end{align}
	where $\|f\|_{X_0}^2=(f,f)_{X_0}$, and	
	\begin{align}
(f,g)_{X_0}=&\alpha_1\big[(\partial_x f^+,\partial_x g^+)_v+(\partial_x f^-,\partial_x g^-)_v\big]\notag\\
&+\alpha_2\big[(\partial_y f^+,\partial_y g^+)+(\partial_y f^-,\partial_y g^-)_v\big]
+\frac{4\sqrt{3}}{3}( f^\perp,\vg^\perp)_v, \label{norm} \\
(f,g)_v=&\lim_{R\to+\infty}\frac{1}{2R}\int_{-R}^{R}\int_{\mathbb{R}}fg\,\D x\,\D y. \notag
	\end{align}
	\begin{align}
	X_\varepsilon=\Bigg\{&f\in H^1_\varepsilon\mid\|f\|_{X_\varepsilon}<\infty, \lim_{\vs\cdot\ve_1\to\infty}f_{\vs}^{\kappa\pm}=0,~f_{\vs}^{\kappa\pm}=-f_{-\vs}^{\kappa\pm},\notag
\\&\left\{f^+,f^-\right\}+\left\{\frac{1}{4},-\frac{1}{4}\right\}\text{ satisfies ADC. }\Bigg\}
	\end{align}
	where $\ve_1=(1,0)$, $\kappa=A,~B$, $\|f\|_{X_\varepsilon}^2=(f,f)_{X_\varepsilon}$, and definition of the Atomistic Dislocation Condition (ADC) will be given in Section~\ref{sec:adc}. We denote $(\cdot,\cdot)_{X_\varepsilon}$ as
	\begin{align} (f,g)_{X_\varepsilon}&=(f^+,g^+)_a+(f^-,g^-)_a+(f^\perp,g^\perp)_\varepsilon\\
(f,g)_\varepsilon&=\lim_{R\to+\infty}\frac{1}{2R}\varepsilon^2\sum_{\vx\in\mathbb{L}_{R}} f_{\vx}^Ag_{\vx}^A+f_{\vx}^Bg_{\vx}^B\notag
\\(f,g)_a&=(f,g)_A+(f,g)_B,
\end{align}
where $(f,g)_A$ and $(f,g)_B$ are defined by
	\begin{align}
	(f,g)_A
=&\lim_{R\to+\infty}\frac{1}{2R}\sum_{\vx\in\mathbb{L}_{R}}
\sum_{\pm}\Bigg\{\frac{1}{6}\varepsilon^2
\sum_{\langle\vs_1,\vs_2\rangle}\partial_{20}V(\vs_1,\vs_2)(D_{\vs_1}f_{\vx}^{A\pm})
(D_{s_1}g_{\vx}^{A\pm})\notag\\
&+\frac{1}{6}\varepsilon^2\sum_{\langle\vs_1,\vs_2\rangle}
\partial_{11}V(\vs_1,\vs_2)\big[(D_{\vs_1}f_{\vx}^{A\pm})(D_{s_2}g_{\vx}^{A\pm})
+(D_{\vs_2}g_{\vx}^{A\pm})(D_{s_1}f_{\vx}^{A\pm})\big]\notag\\
&+\frac{1}{6}\varepsilon^2\sum_{\langle\vs_1,\vs_2\rangle}
\partial_{02}V(\vs_1,\vs_2)(D_{\vs_2}f_{\vx}^{A\pm})(D_{s_2}g_{\vx}^{A\pm})\notag\\
&+\varepsilon^2\frac{1}{2}\sum_{\langle\vs_1,\vs_2\rangle}
\partial_{20}V(\vp+\vs_1,\vs_2)(D^{\vp}_{\vs_1}f_{\vx}^{A\pm})(D^{\vp}_{\vs_1}g_{\vx}^{A\pm})\notag\\
&+\varepsilon^2\frac{1}{2}\sum_{\langle\vs_1,\vs_2\rangle}
\partial_{11}V(\vp+\vs_1,\vs_2)\big[(D^{\vp}_{\vs_1}f_{\vx}^{A\pm})(D_{\vs_2}g_{\vx}^{A\pm})
+(D^{\vp}_{\vs_1}g_{\vx}^{A\pm})(D_{\vs_2}f_{\vx}^{A\pm})\big]\notag\\
&+\varepsilon^2\frac{1}{2}\sum_{\langle\vs_1,\vs_2\rangle}
\partial_{02}V(\vp+\vs_1,\vs_2)(D^{\vp}_{\vs_2}f_{\vx}^{A\pm})(D_{\vs_2}g_{\vx}^{A\pm})\Bigg\},
	\end{align}
	and similar definition is used for $(f,g)_B$ by changing $\vp$ into $-\vp$.
	
	It is easy to check that $X_0$ and $X_\varepsilon$ are Hirbert spaces with  inner products $(\cdot,\cdot)_{X_0}$ and $(\cdot,\cdot)_{X_\varepsilon}$, respectively. We use the notations $\langle\cdot,\cdot\rangle_{0}$ and $\langle\cdot,\cdot\rangle_{\varepsilon}$ for pairing on $X^*_0\times X_0$ and $X^*_\varepsilon\times X_\varepsilon$, respectively.

	Finally, we define the solution spaces for the PN model and the atomistic model for an edge dislocation:
	\begin{align}
	S_0&=\left\{f\mid f-u_0\in X_0\right\},\notag\\
	S_\varepsilon&=\left\{f\mid f-u_\varepsilon\in X_\varepsilon\right\},\label{s0}
	\end{align}
where $u_0=\{u_0^+,u_0^-\}$ and $u_\varepsilon=\{u_\varepsilon^+,u_\varepsilon^-\}$ \begin{align}
u_0^+(\vx)&=u_0^+(x,y)=\frac{1}{2\pi}\arctan x+\frac{1}{4},\notag\\
u_0^-(\vx)&=u_0^-(x,y)=-\frac{1}{2\pi}\arctan x-\frac{1}{4},\notag\\
(u_\varepsilon^\pm)^A_{\vs}&=u_0^\pm(\varepsilon\vs),
~(u_\varepsilon^\pm)^B_{\vs}=u_0^\pm(\varepsilon\vs+\varepsilon\vp).
\end{align}
These definitions are adopted to accommodate the boundary conditions in Eq.~\eqref{eqn:phi1}--\eqref{eqn:phi3} and Eq.~\eqref{eqn:aphi} for the edge dislocation in the PN model and the atomistic model.

In the proof of stability in Section~\ref{sec:stability}, we divide the energy in each of the atomistic model and the PN model into two parts:
\begin{align}
&E_a^A[\vu]\notag\\=&\lim_{R\to+\infty}\frac{1}{2R}\sum_{\vx\in\mathbb{L}_{R}}\Big\{\frac{1}{6}\sum_{\langle\vs_1,\vs_2\rangle}\big[V(\vs_1+{\vu^{A\pm}_{\vx+\vs_1}-\vu^{A\pm}_{\vx}},\vs_2+{\vu^{A\pm}_{\vx+\vs_2}-\vu^{A\pm}_{\vx}})\big]\notag\\&+\frac{1}{2}\sum_{\langle\vs_1,\vs_2\rangle}\big[V(\vp+\vs_1+{\vu^{B\pm}_{\vx+\vs_1}-\vu^{A\pm}_{\vx}},\vs_2+{\vu^{A\pm}_{\vx+\vs_2}-\vu^{A\pm}_{\vx}})\notag\\&-\frac{1}{3}V(\vs_1,\vs_2)-V(\vp+\vs_1,\vs_2)\big]+\varepsilon^2\sum_{\vs}\big[U({\vu^{A+}_{\vx+\vs}-\vu^{A-}_{\vx}}+\vs-\vd)\notag\\&+U({\vu^{B+}_{\vx+\vs}-\vu^{A-}_{\vx}}+\vs-{\vp}-\vd)-U(\vs-\vd)-U(\vs-{\vp}-\vd)\big]\Big\}\label{A A part},\end{align}
\begin{align}
&E_a^B[\vu]\notag\\=&\lim_{R\to+\infty}\frac{1}{2R}\sum_{\vx\in\mathbb{L}_{R}}\Big\{\frac{1}{6}\sum_{\langle\vs_1,\vs_2\rangle}\big[V(\vs_1+{\vu^{B\pm}_{\vx+\vs_1}-\vu^{B\pm}_{\vx}},\vs_2+{\vu^{B\pm}_{\vx+\vs_2}-\vu^{B\pm}_{\vx}})\notag\\&+\frac{1}{2}\sum_{\langle\vs_1,\vs_2\rangle}\big[V(-\vp+\vs_1+{\vu^{A\pm}_{\vx+\vs_1}-\vu^{B\pm}_{\vx}},\vs_2+{\vu^{B\pm}_{\vx+\vs_2}-\vu^{B\pm}_{\vx}})\notag\\&-\frac{1}{3}V(\vs_1,\vs_2)-V(-{\vp}+\vs_1,\vs_2)\big]+\varepsilon^2\sum_{\vs}\big[U({\vu^{A+}_{\vx+\vs}-\vu^{B-}_{\vx}}+\vs-\vp-\vd)\notag\\&-U(\vs-\vp-\vd)+U({\vu^{B+}_{\vx+\vs}-\vu^{B-}_{\vx}}+\vs-\vd)-U(\vs-\vd)\big]\Big\}\label{A B part},\end{align}
\begin{align}
&E_\text{PN}^A[\vu]\notag\\=&\lim_{R\to+\infty}\frac{1}{\sqrt{3}R}\int_{-R}^{R}\int_{\mathbb{R}}\sum_{\langle\vs_1,\vs_2\rangle}\sum_{i+j=2}\tbinom{2}{i}\Big\{\frac{1}{12}\partial_{ij}V(\vs_1,\vs_2)
\big[(\vs_1\cdot\nabla){\vu}^\pm\big]^i\notag\\&\cdot\big[(\vs_2\cdot\nabla){\vu}^\pm\big]^j+\frac{1}{4}\partial_{ij}V(\vs_1+ \vp,\vs_2)\big[(\vs_1+\vp)\cdot(\nabla {\vu}^\pm)\big]^i\big[(\vs_2)\cdot(\nabla {\vu}^\pm)\big]^j\Big\}\notag\\&+\sum_{\vs}\big[U(\vphi+\vs-\vd)+U(\vphi+\vs+{\vp}-\vd)-U(\vs-\vd)-U(\vs+{\vp}-\vd)\big]\,\D x\,\D y,\label{PN a}
\end{align}
\begin{align}
&E_\text{PN}^B[\vu]\notag\\=&\lim_{R\to+\infty}\frac{1}{\sqrt{3}R}\int_{-R}^{R}\int_{\mathbb{R}}\sum_{\langle\vs_1,\vs_2\rangle}\sum_{i+j=2}\tbinom{2}{i}\Big\{\frac{1}{12}\partial_{ij}V(\vs_1,\vs_2)
\big[(\vs_1\cdot\nabla){\vu}^\pm\big]^i\notag\\&\cdot\big[(\vs_2\cdot\nabla){\vu}^\pm\big]^j+\frac{1}{4}\partial_{ij}V(\vs_1- \vp,\vs_2)\big[(\vs_1-\vp)\cdot(\nabla {\vu}^\pm)\big]^i\big[(\vs_2)\cdot(\nabla {\vu}^\pm)\big]^j\Big\}\notag\\&+\sum_{\vs}\big[U(\vphi+\vs-\vd)+U(\vphi+\vs-{\vp}-\vd)-U(\vs-\vd)-U(\vs-{\vp}-\vd)\big]\,\D x\,\D y.\label{PN b}
\end{align}

The way we divide the energy in each of the atomistic model and the PN model into two parts is that we put $AAA$, $AAB$ interactions (intralayer interactions) and interactions between $A+$, $A-$ and $B+$, $A-$ (inter-layer interactions) into the A part, and the rest of interactions are divided into the B parts.
%In addition, we get following lemma of this division.

Accordingly, we define $\gamma$-surface in $E_\text{PN}^A[\vu]$ as $\gamma_A(\vphi)$ and $\gamma$-surface in $E_\text{PN}^B[\vu]$ as $\gamma_B(\vphi)$:
\begin{align}
\gamma_A(\vphi)&:=\sum_{\vs}U(\vphi+\vs-\vd)+U(\vphi+\vs+{\vp}-\vd)-U(\vs-\vd)-U(\vs+{\vp}-\vd)\notag\\\gamma_B(\vphi)&:=\sum_{\vs}U(\vphi+\vs-\vd)+U(\vphi+\vs-{\vp}-\vd)-U(\vs-\vd)-U(\vs-{\vp}-\vd).\label{divide gamma}
\end{align}

%	For simplicity, we denote $\vf\cdot\vg$ as $\vf\vg$ and write formulas without $\sum_{\pm}$ in the following sections.

\subsection{Assumptions}

Next we collect the assumptions that will be used in the proofs:

\textbf{A1} (weak inter-layer interaction): $\varepsilon\ll 1$.

\textbf{A2} (symmetry): $U(-\vx)=U(\vx)$ and $V(\vs_1,\vs_2)=V(-\vs_1,-\vs_2)$.

\textbf{A3} (regularity): $ U(\vx)\in C^5(\mathbb{R}\backslash\{0\})$ and $ V(\vs_1,\vs_2)\in C^5(\mathbb{R}^2\backslash\{|\vs_1|=0 \text{ or }|\vs_2|=0\})$.

\textbf{A4} (fast decay): There exist a constant $\theta>0$ such that:
\begin{align}
| U^{(k)}(\vx)|\le&|\vx|^{-k-4-\theta},\quad |\vx|>0,\notag\\
| V^{\vbeta}(\vs_1,\vs_2)|\le&|\vs_1|^{-\beta_1-6-\theta} |\vs_2|^{-\beta_2-6-\theta},\quad |\vs_1|,|\vs_2|>|\vp|,\notag
\end{align}
where $\vbeta=(\beta_1,\beta_2)$ and $V^{\vbeta}(\vs_1,\vs_2)=\partial_{\vs_1}^{\beta_1}\partial_{\vs_2}^{\beta_2}V(\vs_1,\vs_2)$.

\textbf{A5} (elasticity constant): $\alpha_1>0$ and $\alpha_2>0$.

\textbf{A6} ($\gamma$-surface): $\nabla^2\gamma(\vzero)$ is a positive-definite matrix and $\arg\min_{\vphi\in\mathbb{R}^2}=\sL$, where $\sL=L(\ve_i,\vo)$.

\textbf{A7} (stability division): $\bar{\vartheta}\ge \frac{1}{3}\vartheta$, where
\begin{align}
\bar{\vartheta}&=\min\left\{\inf_{\|\vf\|_{X_0}=1}\langle\delta^2 E_{\text{PN}}^A[\vv]\vf,\vf\rangle_0,\inf_{\|\vf\|_{X_0}=1}\langle\delta^2 E_{\text{PN}}^B[\vv]\vf,\vf\rangle_0\right\}\notag\\\vartheta&=\inf_{\|\vf\|_{X_0}=1}\langle\delta^2 E_{\text{PN}}[\vv]\vf,\vf\rangle_0,
\end{align}
where $\vv$ is the solution of Peierls--Nabarro model (Eqs.~(\ref{PN equation})) and $E_{\text{PN}}^A[\vv]$ (Eq.~(\ref{PN a})), $E_{\text{PN}}^B[\vv]$ (Eq.~(\ref{PN b})) are a division of $E_{\text{PN}}^A[\vv]$.

\textbf{A8} (small stability gap): $\Delta<\min\left\{\frac{1}{3},\frac{1}{3}\bar{\vartheta}\right\}$, where $\Delta=\max\{\Delta_A,\Delta_B\}$ and \begin{align}
\Delta_A&=\lim_{\varepsilon\to 0}\sup_{\|\vf\|_{X_\varepsilon}=1}\langle\delta^2 E_{\text{PN}}^A[\vzero]\vf_A,\vf_A\rangle_0-\langle\delta^2 E_{\text{a}}^A[\vzero]\vf,\vf\rangle_0\notag\\\Delta_B&=\lim_{\varepsilon\to 0}\sup_{\|\vf\|_{X_\varepsilon}=1}\langle\delta^2 E_{\text{PN}}^B[\vzero]\vf_B,\vf_B\rangle_0-\langle\delta^2 E_{\text{a}}^B[\vzero]\vf,\vf\rangle_0,
\end{align}
where $\vf_A$ and $\vf_B$ are the interpolations of complex lattice to be defined in Section 9.2.

For Assumption 1, as an example, $\varepsilon\thickapprox 0.0475\ll 1$ in the bilayer graphene, as shown in the  Appendix of Ref.~\cite{luo2018atomistic}.

Assumptions A2--A4 are satisfied by most pair potentials, such as the Lennard--Jones potential, the Morse potential, the Stillinger-Weber potential. As for Assumption A5--A6, the physical meaning is that perfect lattice without defects is a stable global minimizer of total energy.

For Assumption A7, we remark that $\vartheta>0$ (Proposition \ref{stability of PN}) describes the stability of the dislocation solution of Peierls--Nabarro model. If $\delta^2E_{\text{PN}}^A[\vu]=\delta^2E_{\text{PN}}^B[\vu]$, we obtain $\frac{1}{2}\vartheta=\bar{\vartheta}$. We notice the elastic energy parts in $\delta^2E_{\text{PN}}^A[\vu]$ and $\delta^2E_{\text{PN}}^B[\vu]$ are same: \begin{align}
&\sum_{\langle\vs_1,\vs_2\rangle}\partial_{ij}V(\vs_1- \vp,\vs_2)\big[(\vs_1-\vp)\cdot(\nabla {\vu}^\pm)\big]^i\big[(\vs_2)\cdot(\nabla {\vu}^\pm)\big]^j\notag\\=&\sum_{\langle\vs_1,\vs_2\rangle}\partial_{ij}V(-\vs_1- \vp,-\vs_2)\big[(-\vs_1-\vp)\cdot(\nabla {\vu}^\pm)\big]^i\big[(-\vs_2)\cdot(\nabla {\vu}^\pm)\big]^j\notag\\=&\sum_{\langle\vs_1,\vs_2\rangle}\partial_{ij}V(\vs_1+ \vp,\vs_2)\big[(\vs_1+\vp)\cdot(\nabla {\vu}^\pm)\big]^i\big[(\vs_2)\cdot(\nabla {\vu}^\pm)\big]^j\notag.
\end{align}
Here the first equality is due to $\vs_1$ and $\vs_2$ traverse all values of $\mathbb{L}^*$ and the second equality is due to $V(\vs_1,\vs_2)=V(-\vs_1,-\vs_2)$. Therefore the elastic constant in $\delta^2E_\text{PN}^A[\vu]$ is the equal to the elastic constant in $\delta^2E_\text{PN}^B[\vu]$. The misfit energy parts in $\delta^2E_{\text{PN}}^A[\vu]$ and $\delta^2E_{\text{PN}}^B[\vu]$ are different since $\vd\not=\vzero$ (the vector between the centers of two layers), i.e., $\gamma''_A(\vphi)\not=\gamma''_B(\vphi)$ (Eqs.~\eqref{divide gamma}). However, we notice that \begin{align}&\gamma''_A(\vphi)-\frac{1}{2}\gamma''(\vphi)\notag\\=&\sum_{\vs}\frac{1}{2}\left[U''(\vphi+\vs+{\vp}-\vd)-U''(\vphi+\vs-{\vp}-\vd)\right]\notag\\\approx&\sum_{\vs}\vp\cdot U^{(3)}(\vphi+\vs-\vd).\end{align}
Due to Assumption A4, $U^{(3)}$ is smaller than $U''$. Therefore, $\gamma''_A(\vphi)$ and $\gamma''_B(\vphi)$ are close to $\frac{1}{2}\gamma''(\vphi)$ and $\delta^2E_\text{PN}^A[\vu]$ and $\delta^2E_\text{PN}^B[\vu]$ are close to $\frac{1}{2}\delta^2E_\text{PN}[\vu]$. Therefore, Assumption A7 is reasonable.

For Assumption A8, we remark that $\Delta_A$ ($\Delta_B$) characterizes the stability gap between atomistic model $\delta^2E_\text{a}^A[\vzero]$ ($\delta^2E_\text{a}^B[\vzero]$) and PN model $\delta^2E_\text{PN}^A[\vzero]$ ($\delta^2E_\text{PN}^B[\vzero]$) at the perfect lattice. We also provide an explicit formula for $\delta^2E_\text{a}^A[\vzero]$ in Lemma \ref{Delta}. In  Remark \ref{Ipni}, we show $\Delta_A=0$ and Assumption 7 holds when we consider nearest neighbor potential.

\section{Main results}
Based on the energies of the PN model \eqref{re PN} and  the atomistic model \eqref{at red}, we obtain the following two Euler--Lagrange equations.

The Euler--Lagrange equation of the PN model reads as
\begin{equation}
\begin{cases}
 \delta E_{\text{PN}}[\vu] =0      & \\
 \vu=(u,0) & \\
{\displaystyle \lim_{x\to-\infty}}u^{\perp}(x,y)=0, \ \ {\displaystyle \lim_{x\to+\infty}}u^{\perp}(x,y)=1,& \\
u^+(0,y)=\frac{1}{4},&
\end{cases}\label{PN equation}
\end{equation}

The Euler--Lagrange equation of the atomistic model reads as
\begin{equation}
\begin{cases}
\delta E_{\text{a}}[\vu] =0  &\\\vu=(u,0) &\\u^{\kappa+} _{\vs}=-u^{\kappa-} _{\vs}&     \\
u^{\kappa\pm} _{\vs}+u^{\kappa\pm} _{-\vs}=\pm\frac{1}{2}&\\
u\text{ satisfies Atomistic Dislocation Condition (ADC)},& \\
{\displaystyle \lim_{s\cdot \ve_1\to+\infty}}(u_{\vs}^{\kappa})^{\perp}=1, \ \ {\displaystyle \lim_{\vs\cdot \ve_1\to-\infty}}(u_{\vs}^{\kappa})^{\perp}=0,&
\end{cases}\label{Atomistic equation}
\end{equation}
where $\kappa=A,B$ and Atomistic Dislocation Condition (ADC) will be defined in Section~\ref{sec:adc}.

\newtheorem{mythm}{Theorem}
\begin{mythm}\label{Theorem 1}
(Existence of PN model) If Assumption A2--A5 holds, the PN problems \eqref{PN equation} has a unique solution $\vv=(v,0)\in S_0$, $v=\left\{v^+,v^-\right\}$, $v^+=-v^-$ and $v^\pm(x,y)+v^\pm(-x,-y)=\pm\frac{1}{2}$, which is also the $X_0$--global minimizer of the total energy of (\ref{re PN}). Furthermore, we have $v^+_y(x,y)=0,v^+_x(x,y)>0$ for all $x,y$ and $v^+(\cdot,y)\in W^{5,\infty}(\mathbb{R})$ and $v^+(\cdot,y)\in W^{4,1}(\mathbb{R})$ for all $y$.
\end{mythm}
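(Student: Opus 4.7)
The plan is to reduce the 2D minimization to a 1D one by exploiting the $y$-independent structure of the straight edge dislocation, then solve the 1D problem by the direct method together with an autonomous-ODE first-integral argument, and finally bootstrap regularity. Since $\gamma(\vphi)$ depends only on $\vphi=u^+-u^-$ while the elastic density $W(\nabla u^\pm)$ contains the nonnegative term $\alpha_2(\partial_y u^\pm)^2$, $y$-independent competitors are energetically preferred. For any $\vu\in S_0$ with finite energy, a slicing argument applied to the Ces\`aro average $\lim_{R\to\infty}\tfrac{1}{2R}\int_{-R}^{R}\mathrm{d}y$ together with Fubini yields
\[
E_{\mathrm{PN}}[\vu]\ \ge\ \inf\big\{I[w^+,w^-]:\ w^\pm\in S_0,\ \partial_y w^\pm\equiv 0\big\},
\]
\[
I[w^+,w^-]=\int_{\mathbb{R}}\alpha_1\big[(w^+_x)^2+(w^-_x)^2\big]+\gamma(w^+-w^-)\,\mathrm{d}x,
\]
with equality iff $\partial_y u^\pm\equiv 0$ (strict convexity of $\alpha_2 \xi^2$ in $\xi=\partial_y u^\pm$); hence any minimizer of $E_{\mathrm{PN}}$ in $S_0$ is necessarily $y$-independent, which already gives $v^+_y\equiv 0$.

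For the reduced 1D problem I apply the direct method: the norm $\|\cdot\|_{X_0}$ coerces a minimizing sequence in $H^1_{\mathrm{loc}}$, the quadratic elastic density is weakly lower semicontinuous, and the nonnegative misfit is handled by Fatou. Antisymmetry $v^+=-v^-$ follows because the substitution $(w^+,w^-)\mapsto\bigl(\tfrac{1}{2}(w^+-w^-),-\tfrac{1}{2}(w^+-w^-)\bigr)$ leaves $\vphi$ invariant and strictly decreases $(w^+_x)^2+(w^-_x)^2$ unless $w^+=-w^-$. The centering identity $v^\pm(x)+v^\pm(-x)=\pm\tfrac{1}{2}$ is obtained by combining the reflection $x\mapsto -x$ with the lattice-periodicity symmetry $\gamma(1-\vphi)=\gamma(\vphi)$ from A6 (since $(1,0)\in\sL$) and the pinning $v^+(0)=\tfrac{1}{4}$. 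Setting $w:=v^+=-v^-$ so that $\vphi=2w$, the Euler--Lagrange system collapses to the autonomous second-order ODE
\[
2\alpha_1\, w''(x)=\gamma'(2w(x)),
\]
whose first integral $\alpha_1(w')^2=\tfrac{1}{2}\gamma(2w)+C$ has $C=0$ (from the asymptotics $w(-\infty)=0$, $w(+\infty)=\tfrac{1}{2}$ and $\gamma(0)=\gamma(1)=0$ by A6); strict positivity of $\gamma(2w)$ for $w\in(0,\tfrac{1}{2})$ then forces $w'>0$ on $\mathbb{R}$. The resulting separable first-order equation together with the normalization $w(0)=\tfrac{1}{4}$ pins $w$ uniquely, and uniqueness in $S_0$ follows from combining this with Stage~1.

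Regularity is obtained by bootstrapping: since $\gamma\in C^5$ (from A3 and the explicit formula for $\gamma$ in terms of $U$) and $2w$ is bounded in $[0,1]$, iterating $w''=\gamma'(2w)/(2\alpha_1)$ gives $w\in C^5\cap W^{5,\infty}(\mathbb{R})$; linearizing the ODE about the stable equilibria $0$ and $\tfrac{1}{2}$, where $\gamma''>0$ by A6, produces exponential decay of $w'$ and its derivatives at $\pm\infty$, which integrates to $w\in W^{4,1}(\mathbb{R})$. The step I expect to be the most delicate is the $y$-reduction in Stage~1: the energy is defined through a Ces\`aro average over $y$ on an unbounded strip, so the averaging/slicing must be carried out in a manner that loses no energy and is compatible with the pointwise normalization $v^+(0,y)=\tfrac{1}{4}$ imposed for every $y$ before $y$-independence is known. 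Once this reduction is in hand, the ODE analysis parallels the simple-lattice PN analysis of Ref.~\cite{luo2018atomistic}, with only cosmetic modifications due to the complex-lattice form of $\gamma$.
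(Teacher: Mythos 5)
Your overall architecture (reduce to $y$-independent profiles, antisymmetrize so that $u^\pm=\pm\tfrac12\phi$, then analyze a scalar ODE in $x$) matches the paper's, and your antisymmetrization step is exactly the paper's first stage of its two-step minimization. But the mechanism by which you produce and identify the minimizer is genuinely different, and it is where the gap lies. The paper never runs the direct method: after reducing to $E^{II}_{\mathrm{PN}}[\phi]=\lim_R\frac{1}{2R}\int\!\!\int\big(\tfrac14\alpha|\nabla\phi|^2+\gamma(\phi)\big)$ it proves the Bogomolnyi-type sharp lower bound $E^{II}_{\mathrm{PN}}[\phi]\ge\int_0^1\sqrt{\alpha\gamma(\eta)}\,\mathrm{d}\eta$ by completing the square and integrating $\nabla\Gamma(\phi)$, shows equality forces the \emph{first-order} equation $\tfrac12\sqrt{\alpha}\,\phi_x=\sqrt{\gamma(\phi)}$ together with $\partial_y\phi=0$ (the latter via a method-of-characteristics case analysis), and then gets existence, uniqueness, monotonicity and the global-minimizer property simultaneously from the explicit first-order ODE with $\phi(0)=\tfrac12$, citing the 1D result of Ref.~\cite{luo2018atomistic}. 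Your route instead goes through the direct method plus the second-order Euler--Lagrange equation and its first integral. As sketched, the direct-method step does not close: the admissible class $S_0$ (encoding $\phi\to 0$ at $-\infty$ and $\phi\to 1$ at $+\infty$) is not weakly closed, so a minimizing sequence for $I$ can lose the heteroclinic transition at infinity (the weak limit may converge to the same well at both ends), and "the quadratic part is w.l.s.c.\ plus Fatou" does not rule this out. You need an additional ingredient here --- monotone rearrangement, a concentration-compactness argument anchored at the pinning $w(0)=\tfrac14$, or simply the Bogomolnyi bound itself, which is precisely what the paper uses to sidestep the issue.

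Two further points. First, your claim that equality in the $y$-reduction holds \emph{iff} $\partial_y u^\pm\equiv 0$ is not literally correct for the Cesàro-averaged energy \eqref{re PN}: a perturbation supported in a bounded range of $y$ changes neither the average of $\alpha_2(\partial_y u)^2$ nor the averaged sliced energy, so energy minimality alone cannot give pointwise uniqueness; one must also invoke the Euler--Lagrange equation $\delta E_{\mathrm{PN}}[\vu]=0$ in \eqref{PN equation} (the paper's characteristics argument is its way of handling this equality case, and is likewise delicate). Second, in your first-integral step the conclusions $C=0$ and $w'>0$ require knowing $w'(\pm\infty)=0$ and that $w$ remains in $[0,\tfrac12]$; both are true but need an argument (finite energy plus A6 gives $\gamma(2w)\in L^1$ and $w'\in L^2$, and the range confinement follows from a truncation/comparison argument), whereas the first-order equation delivered by the Bogomolnyi bound gives monotonicity for free. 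Also, the symmetry $\gamma(1-\phi)=\gamma(\phi)$ you use for the centering identity comes from A2 and lattice periodicity (Lemma \ref{gamma re}), not from A6. None of these objections is fatal to your strategy, but the existence step must be repaired before the proof is complete.
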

\begin{mythm}\label{thm:Atom}(Existence of atomistic model and convergence) If Assumption A1--A6 holds, there exist an $\varepsilon_0$ such that for any $0<\varepsilon<\varepsilon_0$, the atomistic model problem \eqref{Atomistic equation} has a unique $\vv^\varepsilon\in S_\varepsilon$, which is also an $X_\varepsilon$--local minimizer of the total energy (\ref{at red}). Furthermore, $||\vv^\varepsilon-\vv||_{X_\varepsilon}\le C\varepsilon^2$, where $v$ is the solution of PN model.
\end{mythm}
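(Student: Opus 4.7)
My plan is to apply the consistency-plus-stability program of Luo \emph{et al.}, adapted here to the complex lattice, and close the argument with a Newton--Kantorovich / quantitative inverse-function step. Let $\vv=(v,0)$ be the PN solution from Theorem 1 and define the reference atomistic configuration by restricting $\vv$ to the two simple sublattices: $(u_\varepsilon^\pm)^A_{\vs}=v^\pm(\varepsilon\vs)$, $(u_\varepsilon^\pm)^B_{\vs}=v^\pm(\varepsilon\vs+\varepsilon\vp)$. The regularity $v^\pm(\cdot,y)\in W^{5,\infty}\cap W^{4,1}$ furnished by Theorem 1, together with $v^\pm_y\equiv 0$, is exactly what is needed to control the Taylor remainders below. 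The goal is then to produce a true critical point $\vv^\varepsilon$ of $E_{\rm a}$ within an $X_\varepsilon$-ball of radius $O(\varepsilon^2)$ around $\vu_\varepsilon$.

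\textbf{Consistency.} First I would prove $\|\delta E_{\rm a}[\vu_\varepsilon]\|_{X_\varepsilon^*}\le C\varepsilon^2$. Starting from $e_{\vx}[\vu]$ in \eqref{at re}, pair $\langle\delta E_{\rm a}[\vu_\varepsilon],\vf\rangle_\varepsilon$ with an arbitrary $\vf\in X_\varepsilon$ of unit norm and Taylor-expand each lattice difference $D_{\vs}u_\varepsilon^{\kappa\pm}$ and $D_{\vs}^{\pm\vp}u_\varepsilon^{\kappa\pm}$ in powers of $\varepsilon$, regrouping contributions by sublattice exactly as in the bookkeeping of Section~2 and Lemma~1. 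Collecting order-$\varepsilon^0$ and order-$\varepsilon^1$ contributions and converting lattice sums into integrals via a Riemann-sum estimate reproduces $\langle\delta E_{\rm PN}[\vv],\tilde\vf\rangle_0$, where $\tilde\vf$ is a piecewise-linear interpolant of $\vf$; this vanishes because $\vv$ satisfies \eqref{PN equation}. The $3$-fold rotational symmetry (used as in the proof of Lemma~1) together with A2 eliminates the odd-order residue, and the fast decay A4 makes the remaining sums involving third-- and fourth--order derivatives of $v^\pm$ absolutely convergent, yielding the $O(\varepsilon^2)$ bound.

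\textbf{Stability.} Second I would show the uniform coercivity $\langle\delta^2 E_{\rm a}[\vw]\vf,\vf\rangle_\varepsilon\ge c_0\|\vf\|_{X_\varepsilon}^2$ for $\vw$ in a fixed $X_\varepsilon$-neighborhood of $\vu_\varepsilon$, with $c_0>0$ independent of $\varepsilon$. The key device is the $A/B$ energy split $E_{\rm a}=E_{\rm a}^A+E_{\rm a}^B$ and $E_{\rm PN}=E_{\rm PN}^A+E_{\rm PN}^B$ of \eqref{A A part}--\eqref{PN b}, together with the two piecewise-linear interpolation maps $\vf\mapsto\vf_A$ and $\vf\mapsto\vf_B$ built in Section~9. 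For each half I would bound $\langle\delta^2 E_{\rm a}^A[\vu_\varepsilon]\vf,\vf\rangle_\varepsilon\ge \langle\delta^2 E_{\rm PN}^A[\vv]\vf_A,\vf_A\rangle_0-\Delta_A\|\vf\|_{X_\varepsilon}^2+o(1)$ by first transferring from $\vu_\varepsilon$ back to the perfect lattice $\vzero$ (a consistency estimate for the second variation, again via Taylor expansion), then invoking the definition of $\Delta_A$ in A8, then transferring from $\vzero$ back to $\vv$ on the PN side; analogously for $B$. Summing, applying A7 to get $\bar\vartheta\ge\vartheta/3>0$, and A8 to get $\Delta<\bar\vartheta/3$, yields a strictly positive lower bound $c_0\sim \tfrac23\bar\vartheta$. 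Lipschitz dependence of $\delta^2 E_{\rm a}$ on $\vw$ (from A3 plus the boundedness of $\nabla\vv$) extends the estimate from $\vu_\varepsilon$ to an $O(1)$ ball.

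\textbf{Closure and main obstacle.} With $\|\delta E_{\rm a}[\vu_\varepsilon]\|_{X_\varepsilon^*}=O(\varepsilon^2)$, a uniform coercivity gap $c_0$, and Lipschitz $\delta^2 E_{\rm a}$, a standard contraction argument (equivalently Newton--Kantorovich) for the map $F(\vw):=\delta E_{\rm a}[\vw]:X_\varepsilon\to X_\varepsilon^*$ produces a unique zero $\vv^\varepsilon$ with $\|\vv^\varepsilon-\vu_\varepsilon\|_{X_\varepsilon}\le 2 C\varepsilon^2/c_0$; positivity of $\delta^2 E_{\rm a}$ at $\vv^\varepsilon$ makes it a strict $X_\varepsilon$-local minimizer, and the quantitative convergence $\|\vv^\varepsilon-\vv\|_{X_\varepsilon}\le C\varepsilon^2$ follows. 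The hardest step, and the reason the simple-lattice argument does not transfer mechanically, is the stability: on the complex lattice the three-body potentials couple $A$- and $B$-atoms through the $AAB$ and $ABB$ interactions while the inter-layer potential mixes all four $A^\pm B^\mp$ pairs, and a single interpolation operator is incapable of controlling the resulting cross-sublattice quadratic form without introducing oscillations from a higher-degree interpolant. The two-interpolant splitting of Section~9, tuned to the $A/B$ energy decomposition, is precisely what produces a gap $\Delta$ small enough to fit inside the PN stability margin $\bar\vartheta$ of A7, and quantifying this is where the bulk of the technical effort will be.
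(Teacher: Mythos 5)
Your proposal follows essentially the same route as the paper: consistency of the PN solution giving an $O(\varepsilon^2)$ residual for $\delta E_{\rm a}$, stability of the atomistic Hessian via the $A/B$ energy split and the two piecewise-linear interpolants combined with Assumptions A7--A8, and a Lax--Milgram/contraction-mapping closure in a ball of radius $O(\varepsilon^2)$ around the (lattice restriction of the) PN solution. The one quantitative slip is your claim that Lipschitz continuity of $\delta^2 E_{\rm a}$ extends coercivity to an $O(1)$ ball: the paper's Lipschitz estimate carries a factor $\varepsilon^{-1}$ coming from the misfit terms, so stability only transfers to balls of radius $o(\varepsilon)$ (the paper uses $\varepsilon^2$), but since your contraction step operates in the $O(\varepsilon^2)$ ball anyway, the argument still closes.
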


\section{Proof of Theorem 1}
In this section, we prove Theorem \ref{Theorem 1}.
%Hence $\vu=(u,0),~\vphi=(\phi,0)$ and there is only the first component of the displacement $\vu$ in the (\ref{elas no}), we just need to consider the first component of $\vu,~\vphi$ in this section.
  We first prove that $v^+=-v^-$ and $\partial_yv^\pm=0$ for all  $(x,y)\in\mathbb{R}^2$. Therefore, we can reduce the two-dimensional problem to a one-dimensional problem, and then the solution existence of the two dimensional PN model follows the  existence  of the one-dimensional  PN model proved in Ref.~\cite{luo2018atomistic}.

Since the solution of the PN model (\ref{PN equation})  is the global minimizer of the associated energy, we divide the one-step minimization into a two-step minimization:

\textbf{(1)} Given $\phi\in\Phi$,  find $u_\phi=\left\{u^+_\phi,u^-_\phi\right\}$, where $u_\phi\in S_0$ with $u^\bot_\phi=\phi$ such that $$
E_{\mathrm{elas}}\left[u_{\phi}\right]=\inf _{u\in S_0, u^{\perp}=\phi} E_{\mathrm{elas}}[u]
$$ and we define $E^{II}_{\text{elas}}[\phi]=\inf _{u\in S_0, u^{\perp}=\phi} E_{\mathrm{elas}}[u]$.

\textbf{(2)} Find $\phi^*\in \Phi$ such that $$
E_{\mathrm{PN}}^{I I}\left[\phi^{*}\right]=\inf _{\phi^*\in \Phi} E_{\mathrm{PN}}^{I I}[\phi]=E_{\mathrm{elas}}^{I I}[\phi]+E_{\mathrm{mis}}[\phi].
$$
Here $$\Phi:=\left\{\phi\in C^5(\mathbb{R}^2)\mid\left\{\phi-\frac{2}{\pi}\arctan x+1,-\phi+\frac{2}{\pi}\arctan x-1\right\}\in X_0\right\}.$$

Similar to Proposition 1 in Ref.~\cite{luo2018atomistic}, we have the follow lemma:
\begin{mylem}\label{u=phi}
Suppose that $E_{\text{PN}}[u]<\infty$. Then the two-step minimization problem is equivalent to the one-step minimization problem.
\end{mylem}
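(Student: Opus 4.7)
The plan is to exploit the structural observation that the misfit energy $E_{\text{mis}}[\phi]$ depends only on the disregistry $\phi=u^\perp=u^+-u^-$, while only $E_{\text{elas}}$ sees the individual fields $u^+,u^-$. Thus the total energy $E_{\text{PN}}[u]=E_{\text{elas}}[u]+E_{\text{mis}}[u^\perp]$ naturally decouples along the fibers $\{u\in S_0\mid u^\perp=\phi\}$, and minimizing first over each fiber (step (1)) and then over $\phi$ (step (2)) should yield the same infimum and the same minimizer as a single minimization over $S_0$.

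The first task is to show that step (1) is well-posed: for each $\phi\in\Phi$ there is a unique $u_\phi\in S_0$ with $u_\phi^\perp=\phi$ that minimizes $E_{\text{elas}}$. Because $E_{\text{elas}}$ is the quadratic form
\begin{equation*}
E_{\text{elas}}[u]=\int_{\mathbb{R}^2}\alpha_1(\partial_xu^+)^2+\alpha_2(\partial_yu^+)^2+\alpha_1(\partial_xu^-)^2+\alpha_2(\partial_yu^-)^2\,\D x\,\D y,
\end{equation*}
the change of variables $u^+=(\phi+w)/2$, $u^-=(-\phi+w)/2$ diagonalizes the integrand as the sum of the $\phi$-contribution and a strictly convex, coercive quadratic in $w$ (the cross terms cancel by the identity $(a+b)^2+(a-b)^2=2a^2+2b^2$). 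The compatibility conditions in $S_0$ (vanishing at $x=0$ of $u^+_\phi+1/4$ and $u^-_\phi-1/4$, and vanishing at $x=\pm\infty$) translate to homogeneous conditions on $w$ under assumptions A5 on $\alpha_1,\alpha_2$, so a unique minimizer $w_\phi\in X_0$ exists by the direct method, giving $u_\phi$ and $E_{\text{elas}}^{II}[\phi]:=E_{\text{elas}}[u_\phi]$.

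With $u_\phi$ in hand I would prove the two-sided inequality between the infima. For the easy direction, for any $\phi\in\Phi$ we have $u_\phi\in S_0$ with $u_\phi^\perp=\phi$, so
\begin{equation*}
\inf_{u\in S_0}E_{\text{PN}}[u]\le E_{\text{PN}}[u_\phi]=E_{\text{elas}}^{II}[\phi]+E_{\text{mis}}[\phi]=E_{\text{PN}}^{II}[\phi],
\end{equation*}
and taking infimum over $\phi\in\Phi$ gives one inequality. For the reverse direction, take any $u\in S_0$ with $E_{\text{PN}}[u]<\infty$, set $\phi:=u^\perp$, verify that $\phi\in\Phi$ from the asymptotics and regularity of $u$ (this uses the definition of $\Phi$ as functions whose shift by $\tfrac{2}{\pi}\arctan x$ lies in the $X_0$ space), and then apply step (1) to obtain $u_\phi$ with $E_{\text{elas}}[u_\phi]\le E_{\text{elas}}[u]$; adding the common misfit term $E_{\text{mis}}[\phi]=E_{\text{mis}}[u^\perp]$ yields $E_{\text{PN}}^{II}[\phi]\le E_{\text{PN}}[u]$. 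Finally, I would note that a minimizer $\phi^*$ of $E_{\text{PN}}^{II}$ yields a minimizer $u^*:=u_{\phi^*}$ of $E_{\text{PN}}$, and conversely a minimizer $u^*$ of $E_{\text{PN}}$ yields $\phi^*:=(u^*)^\perp$ for $E_{\text{PN}}^{II}$, establishing the equivalence of the two formulations.

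The main obstacle I expect is the careful bookkeeping of boundary/asymptotic conditions: one must verify that $u_\phi$ lies in $S_0$ (in particular that the center condition $u^+(0,y)=1/4$ is compatible with the inner minimization), and conversely that $u^\perp\in\Phi$ for every $u\in S_0$ with finite $E_{\text{PN}}$. This is where the precise definition of $\Phi$ and the boundary data in $S_0$ must be matched, and where the assumption $E_{\text{PN}}[u]<\infty$ is used to rule out pathological tails. Once these function-space issues are handled, the equivalence follows from the quadratic decoupling argument above.
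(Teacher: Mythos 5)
Your proposal is correct and follows essentially the same route the paper relies on: the paper's own proof of this lemma simply cites \cite[Proposition 1]{luo2018atomistic}, and the underlying argument --- the misfit energy depends only on $\phi=u^\perp$ while the quadratic elastic energy decouples under $u^{\pm}=(\pm\phi+w)/2$ so that the fiberwise minimizer is $u^{\pm}=\pm\tfrac{1}{2}\phi$ --- is exactly the computation the paper carries out at the start of the proof of Theorem~\ref{Theorem 1} (``due to symmetry of quadratic functions''). The boundary-condition and regularity bookkeeping you flag (matching $S_0$ with $\Phi$) is indeed the only remaining detail, and it is deferred to the cited reference in the paper as well.
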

\begin{proof}
The rigorous proof is given in \cite[Proposition 1]{luo2018atomistic}.
\end{proof}

We also need the following two lemmas for the proof of Theorem 1. Lemma~\ref{ODE T} is from  Ref.~\cite{pinchover2005introduction}, and the proof of Lemma~\ref{gamma re}  is similar as that of \cite[Lemma 3]{luo2018atomistic}.

\begin{mylem}[\cite{pinchover2005introduction}]\label{ODE T} Suppose that the curve $\gamma:(x,y,z)=(f(s),g(s),h(s))$, $s\in \mathbb{R}$, is smooth, and $f'^2+g'^2\not=0$, if the  Jacobian determinant in $$P_0=(x_0,y_0,z_0)=(f(s_0),g(s_0),h(s_0))$$ is non-zero, i.e.,\[ J= \begin{vmatrix}
f'(s_0)&g'(s_0)\\
a(x_0,y_0,z_0)&b(x_0,y_0,z_0)
\end{vmatrix}\not=0.\] And $a(P_0),b(P_0),c(P_0)$ are smooth near $\gamma$. Then the Cauchy initial value problem:$$\left\{\begin{array}{l}{a(x,y,u)\partial_x u+b(x,y,u)\partial_y u=c(x,y,u)}\\ {u(f(s),g(s))=h(s)} .\end{array}\right. $$has a unique solution near $P_0$.
\end{mylem}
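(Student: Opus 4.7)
The plan is to carry out the classical method of characteristics. First I introduce the characteristic system of ODEs
\begin{equation*}
\dot{X}(s,t) = a(X,Y,Z), \quad \dot{Y}(s,t) = b(X,Y,Z), \quad \dot{Z}(s,t) = c(X,Y,Z),
\end{equation*}
with initial data prescribed along $\gamma$ at $t=0$ by $(X(s,0), Y(s,0), Z(s,0)) = (f(s), g(s), h(s))$. Since $a,b,c$ are smooth in a neighborhood of $\gamma$, the Picard--Lindel\"of theorem with smooth parameter dependence yields a unique smooth local solution $(X,Y,Z)(s,t)$ on some neighborhood of $(s_0, 0)$ in the $(s,t)$-plane.

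Second, I study the projection $\Phi(s,t) := (X(s,t), Y(s,t))$ onto the $(x,y)$-plane. Its Jacobian at $(s_0,0)$ is
\begin{equation*}
D\Phi(s_0,0) = \begin{pmatrix} X_s & X_t \\ Y_s & Y_t \end{pmatrix}\bigg|_{(s_0,0)} = \begin{pmatrix} f'(s_0) & a(P_0) \\ g'(s_0) & b(P_0) \end{pmatrix},
\end{equation*}
whose determinant is precisely $J\neq 0$. The inverse function theorem then supplies a smooth local inverse $\Psi(x,y) = (s(x,y), t(x,y))$ of $\Phi$ on a neighborhood of $(x_0,y_0)$. I define
\begin{equation*}
u(x,y) := Z\bigl(s(x,y),\, t(x,y)\bigr)
\end{equation*}
on this neighborhood. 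The initial condition $u(f(s), g(s)) = h(s)$ is immediate, since $\Psi(f(s),g(s)) = (s,0)$ and $Z(s,0) = h(s)$.

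Third, I verify that $u$ solves the PDE. By the chain rule,
\begin{equation*}
a\,u_x + b\,u_y = Z_s\,(a\,s_x + b\,s_y) + Z_t\,(a\,t_x + b\,t_y).
\end{equation*}
Using $X_t = a$, $Y_t = b$, and the identity $D\Phi\cdot D\Psi = I$, a short computation gives $X_t s_x + Y_t s_y = 0$ and $X_t t_x + Y_t t_y = 1$. Therefore $a u_x + b u_y = Z_t = c(X,Y,Z) = c(x,y,u)$, as required. For uniqueness, any other smooth solution $\tilde{u}$ restricted to a characteristic $t \mapsto (X(s,t), Y(s,t))$ satisfies $\frac{d}{dt}\tilde{u}(X,Y) = a\tilde{u}_x + b\tilde{u}_y = c$ with the same initial value $h(s)$; ODE uniqueness then forces $\tilde{u}(X(s,t), Y(s,t)) = Z(s,t)$, and since the characteristics foliate a neighborhood of $(x_0,y_0)$ via $\Phi$, we conclude $\tilde{u} = u$ locally.

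The main technical point is the non-degeneracy hypothesis $J\neq 0$: geometrically this is the transversality of the characteristic direction $(a(P_0), b(P_0))$ to the projected initial curve $(f'(s_0), g'(s_0))$ at $P_0$, and it is exactly what makes $\Phi$ a local diffeomorphism. If $J$ vanished, characteristics would be tangent to $\gamma$ at $P_0$ and the Cauchy problem would either fail to be solvable or fail to be uniquely so. Bookkeeping the chain-rule identities via $D\Phi\cdot D\Psi = I$ is the only computation that requires care; the remaining arguments reduce to standard ODE existence and the inverse function theorem.
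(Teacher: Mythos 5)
The paper does not prove this lemma at all: it is quoted verbatim from the textbook \cite{pinchover2005introduction} and used as a black box in the proof of Theorem 1, so there is no in-paper argument to compare against. Your proof is the standard method-of-characteristics argument that the citation points to, and the existence half is correct and complete: the characteristic system is a genuine autonomous ODE in $(X,Y,Z)$, the Jacobian condition $J\neq 0$ is exactly the invertibility of $D\Phi(s_0,0)$, and the chain-rule verification goes through because $a(x,y,u(x,y))=a(X,Y,Z)$ holds by the very definition $u\circ\Phi=Z$.

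The uniqueness step, however, is circular as written for this \emph{quasilinear} equation. You differentiate $\tilde u$ along the characteristics of $u$ and claim $\frac{d}{dt}\tilde u(X,Y)=a\tilde u_x+b\tilde u_y=c$. But $\frac{d}{dt}\tilde u(X,Y)=\tilde u_x\,a(X,Y,Z)+\tilde u_y\,b(X,Y,Z)$, whereas the PDE for $\tilde u$ only gives $a(X,Y,\tilde u)\tilde u_x+b(X,Y,\tilde u)\tilde u_y=c(X,Y,\tilde u)$; identifying the two requires $\tilde u(X,Y)=Z$, which is precisely what you are trying to prove. The standard repair is to run the characteristic flow \emph{through} $\tilde u$: solve $\dot{\tilde X}=a\bigl(\tilde X,\tilde Y,\tilde u(\tilde X,\tilde Y)\bigr)$, $\dot{\tilde Y}=b\bigl(\tilde X,\tilde Y,\tilde u(\tilde X,\tilde Y)\bigr)$ with data $(f(s),g(s))$, note that then $\bigl(\tilde X,\tilde Y,\tilde u(\tilde X,\tilde Y)\bigr)$ solves the same autonomous characteristic system as $(X,Y,Z)$ with the same initial triple $(f(s),g(s),h(s))$, and invoke ODE uniqueness to conclude the two space curves coincide, whence $\tilde u=u$ where the projected characteristics cover a neighborhood of $(x_0,y_0)$. (For the application in the paper's Theorem 1 the coefficients $a,b,c$ do not depend on $u$, so the equation is linear there and your argument is already valid in that special case.)
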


\begin{mylem}\label{gamma re}
Suppose $U(\vx)=U(-\vx)$, then we obtain \begin{align}
&(1)~~\gamma(\vxi) \geq \frac{1}{4} \gamma_{xx}(\vzero ) |\vxi|^{2},|\vxi| \leq C
,\notag\\&(2)~~\gamma(\ve_1+\vxi)=\gamma(\vxi),\notag\\&(3)~~ \gamma(\vxi_1)=\gamma(\vxi_2) \text{ when } \vxi_1 , \vxi_2 \text{ are symmetric with respect to the y-axis},
\end{align}
where $C$ depends on $\gamma_{xx}(\vzero)$ and $\ve_1=(1,0)$.
\end{mylem}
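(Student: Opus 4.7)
The plan is to establish the three properties in the order (2), (3), (1), since (1) draws quantitative information from the symmetries in (3) together with the three-fold rotational symmetry of the hexagonal lattice. Claim (2) reduces to a shift in the lattice sum: the $\vxi$-dependent summands transform as $U(\ve_1+\vxi+\vs-\vd+\ast) = U(\vxi+(\vs+\ve_1)-\vd+\ast)$, and since $\ve_1\in\mathbb{L}$ the substitution $\vs \mapsto \vs+\ve_1$ is a bijection of $\mathbb{L}$; the compensating constants $U(\vs-\vd+\ast)$ do not involve $\vxi$ and contribute identically on both sides. Summing reassembles $\gamma(\vxi)$.

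For claim (3), let $R$ denote reflection across the $y$-axis. Because $U(\vx) = V_{\rm d}(|(\vx,d)|)$ depends only on the three-dimensional distance, $U = U\circ R$. A direct check on the basis gives $R\ve_1 = -\ve_1$ and $R\ve_2 = \ve_2-\ve_1$, so $R$ is a lattice automorphism of $\mathbb{L}$; explicit computation from $\vp=(\tfrac12,\tfrac{\sqrt 3}{6})$ and $\vd=(\tfrac12,-\tfrac{\sqrt 3}{6})$ (after rescaling) yields the key identities $R\vd = \vd-\ve_1$ and $R\vp+\vp = \ve_1$. I would apply $U = U\circ R$ termwise inside $\gamma(R\vxi)$ and then substitute $\vs \mapsto R\vs+\ve_1$ in the lattice sum; the four families of summands associated with the $A^\pm A^\mp$, $A^\pm B^\mp$, $B^\pm A^\mp$ and $B^\pm B^\mp$ interactions are permuted among themselves by this combined operation, reassembling the sum that defines $\gamma(\vxi)$.

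For claim (1), symmetry (3) together with the three-fold rotational symmetry of the hexagonal lattice (which induces an analogous invariance of $\gamma$ by the same kind of bijective change of summation variable) forces $\nabla^2\gamma(\vzero)$ to be isotropic: (3) kills the off-diagonal entry, and the $2\pi/3$ rotation forces $\gamma_{xx}(\vzero) = \gamma_{yy}(\vzero)$. Combined with $\gamma(\vzero) = 0$ (inspection) and $\nabla\gamma(\vzero) = \vzero$ (since $\vzero\in\arg\min\gamma$ by A6), Taylor's theorem gives
\[
\gamma(\vxi) \;=\; \tfrac12\,\gamma_{xx}(\vzero)|\vxi|^2 + O(|\vxi|^3),
\]
with the cubic remainder controlled uniformly on a fixed neighborhood of $\vzero$ by A3--A4. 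Choosing $C$ small enough that the remainder is absorbed into $\tfrac14\gamma_{xx}(\vzero)|\vxi|^2$ yields the stated lower bound. The main obstacle is the reflection bookkeeping in (3): the shifts $\vd$ and $\vd\pm\vp$ are not individually invariant under $R$, so tracking how $R$ (followed by the $\ve_1$-translation) permutes the four families of summands is the delicate step; once it is settled, (1) and (2) follow with essentially no further work.
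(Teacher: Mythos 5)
Your proof is correct in substance, but it is worth noting that the paper does not actually prove this lemma: it simply remarks that the argument is ``similar'' to Lemma~3 of Ref.~[luo2018atomistic], which treats the simple-lattice $\gamma$-surface. What you supply is precisely the part that citation glosses over, namely the bookkeeping for the two interpenetrating sublattices, and your overall architecture (periodicity by reindexing $\vs\mapsto\vs+\ve_1$; reflection symmetry via a lattice automorphism combined with $U\circ R=U$; then isotropy of $\nabla^2\gamma(\vzero)$ from reflection plus $3$-fold rotation, $\gamma(\vzero)=0$, $\nabla\gamma(\vzero)=\vzero$, and a Taylor expansion with remainder controlled by A3--A4) is the right one and matches how the simple-lattice case is handled. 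Two small corrections. First, your ``key identity'' $R\vp+\vp=\ve_1$ is false: with $\vp=(\tfrac12,\tfrac{\sqrt3}{6})$ one has $R\vp=(-\tfrac12,\tfrac{\sqrt3}{6})=\vp-\ve_1$, so the correct statement is $\vp-R\vp=\ve_1$ (equivalently $R\vp=\vp-\ve_1$), alongside $R\vd=\vd-\ve_1$. With the corrected identity each of the four families of summands ($A^+A^-$, $A^+B^-$, $B^+A^-$, $B^+B^-$) is \emph{individually} invariant under ``reflect, then translate by a lattice vector'' --- e.g.\ the $+\vp$ family has shift $-R\vd+R\vp=-\vd+\vp$ after absorbing lattice translations --- rather than the $\pm\vp$ families being swapped as your (incorrect) identity would give; since either a swap or the identity permutation preserves the total sum, your conclusion survives, but the computation as written does not. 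Second, claim (3) genuinely needs $U\circ R=U$ and not merely the stated hypothesis $U(-\vx)=U(\vx)$; you correctly obtain this from the radial form $U(\vx)=V_{\rm d}(|(\vx,d)|)$, but you should say explicitly that you are using this structural fact about the inter-layer pair potential rather than the lemma's literal hypothesis. With these two points repaired, the argument is complete; in particular your observation that $\nabla\gamma(\vzero)=\vzero$ can be read off from A6 (or, alternatively, forced by the $2\pi/3$ rotational invariance, since that rotation has no fixed nonzero vector) and that positivity of $\gamma_{xx}(\vzero)$ from A6 lets the cubic remainder be absorbed for $|\vxi|\le C$ is exactly what is needed for claim (1).
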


%Now we begin to prove Theorem \ref{Theorem 1}.
\begin{proof}[Proof of Theorem 1]
Due to Lemma \ref{no xy}, we can rescale $y$ to $\bar{y}$:
\begin{equation}
    \bar{y}=\sqrt{\frac{\alpha_1}{\alpha_2}}y,\label{change constant}
\end{equation}
where $\alpha_1$ and $\alpha_2$ are defined in Lemma \ref{no xy}. To simplify the notation, we denote $\bar{y}$ as $y$ and $\gamma(\xi):=\gamma\big((\xi,0)\big)$ in this proof. Furthermore, we notice \eqref{change constant} dose not change the boundary condition of PN model. Thus we define $$\alpha:=\frac{1}{2}\sqrt{\alpha_1\alpha_2}. $$In the PN model, the elastic energy is that $$E_{\text{elas}}=\lim_{{R}\to +\infty}\frac{1}{2R}\int_{-R}^{R}\int_{{\mathbb{R}}}\frac{1}{2}\alpha(|\nabla u^{+}|^2+|\nabla u^{-}|^2)\,\D x \,\D y.$$

We then divide the one-step minimization problem into two-step minimization problem by Lemma \ref{u=phi}. For any $\phi\in \Phi$, we have
\begin{align}
&\arg \min _{u \in S_{0}, u^{\perp}=\phi} E_{\mathrm{elas}}[u]\notag\\=&\arg \min _{u \in S_{0}} \lim_{{R}\to +\infty}\frac{1}{2R}\int_{-R}^{R}\int_{{\mathbb{R}}} \frac{1}{2} \alpha\left(\left|\nabla u^{+}\right|^{2}+\left|\nabla u^{+}-\nabla \phi\right|^{2}\right) \,\D x \,\D y.
\end{align}
Due to symmetry of quadratic functions,we have $$\arg \min _{u \in S_{0}, u^{\perp}=\phi} E_{\mathrm{elas}}[u]=\left\{\frac{1}{2}\phi,-\frac{1}{2}\phi\right\}.$$

By Lemma \ref{u=phi}, we further minimize the following energy $E^{II}_{\text{PN}}[\phi]$:
\begin{align}
E_{\mathrm{PN}}^{I I}[\phi]&=\lim_{{R}\to +\infty}\frac{1}{2R}\int_{-R}^{R}\int_{{\mathbb{R}}}\left(\frac{1}{4} \alpha|\nabla \phi|^{2}+\gamma(\phi)\right) \,\D x \,\D y\notag\\
&\ge\lim_{{R}\to +\infty}\frac{1}{2R}\int_{-R}^{R}\int_{{\mathbb{R}}}|\nabla \phi(x,y)| \sqrt{\alpha \gamma(\phi(x,y))}\,\D x \,\D y\notag\\
&=\lim_{{R}\to +\infty}\frac{1}{2R}\int_{-R}^{R}\int_{{\mathbb{R}}}\left|\nabla \Gamma(\phi(x,y))\right|\,\D x \,\D y\notag\\
&\geq \Big|\lim_{{R}\to +\infty}\frac{1}{2R}\int_{-R}^{R}\int_{{\mathbb{R}}}\nabla \Gamma(\phi(x,y))\,\D x \,\D y\Big|\notag\\
&=\Big|\Big(\lim_{{R}\to +\infty}\frac{1}{2R}\int_{-R}^{R} \big[\lim _{x \rightarrow+\infty} \Gamma(\phi(x,y))-\lim _{x \rightarrow-\infty} \Gamma\left(\phi(x,y)\right)\big]\,\D y,\notag\\&\quad~ \lim_{{R}\to +\infty}\frac{1}{2R}\int_{{\mathbb{R}}}\big[\Gamma(\phi(x,R))-\Gamma(\phi(x,-R))\big]\,\D x\Big)\Big|\notag\\&=\Big|\Big(\int_{0}^{1} \sqrt{\alpha \gamma(\eta)} \mathrm{d} \eta,\lim_{{R}\to +\infty}\frac{1}{2R}\int_{{\mathbb{R}}}\big[\Gamma(\phi(x,R))-\Gamma(\phi(x,-R))\big]\,\D x\Big)\Big|\notag\\&\ge\int_{0}^{1} \sqrt{\alpha \gamma(\eta)} \mathrm{d} \eta ,
\end{align}
where $\Gamma(\xi)=\int_{0}^{\xi} \sqrt{\alpha \gamma(\eta)} \mathrm{d} \eta \text { for } \xi \in \mathbb{R}$. Note that the first inequality holds if and only if $\frac{1}{2} \sqrt{\alpha}|\nabla \phi|=\sqrt{\gamma \circ \phi}$. The second inequality  holds if and only if there exists a constant $\lambda$, s.t $\partial_x g=\lambda \partial_y g$ or $\partial_y g=0$ in $\mathbb{R}\times (-R,R)$ for all $R$, where $g=\Gamma(\phi(x,y))$, and $g(0,y)=\int_{0}^{\frac{1}{2}} \sqrt{\alpha \gamma(\eta)} \mathrm{d} \eta$.

Due to Lemma \ref{ODE T}, we can solve this partial differential equation of first order by the method of characteristics except $\partial_y g=0$.

(1) $\lambda \partial_x g=\partial_y g,\lambda \not=0$, i.e.,  \[ J= \begin{vmatrix}
0&1\\
\lambda&-1
\end{vmatrix}\not=0.\]Thus this problem is equivalent to $$\left\{\begin{array}{l}{\frac{\,\D x}{\,\D t}=\lambda,\quad \frac{\,\D y}{\,\D t}=-1,\quad \frac{\,\D z}{\,\D t}=0}\\ {(x,y,z)|_{t=0}=(0,s,\int_{0}^{\frac{1}{2}} \sqrt{\alpha \gamma(\eta)} \mathrm{d} \eta)} .\end{array}\right. $$Thus we get $g\equiv \int_{0}^{\frac{1}{2}} \sqrt{\alpha \gamma(\eta)} \mathrm{d} \eta$ near $\{0\}\times (-R,R)$, which is contradictory because
 the dislocation solution near the $x=0$ is not the constant solution.

(2) $\partial_x g=0,\lambda=0$, i.e.,\[ J= \begin{vmatrix}
0&1\\
1&0
\end{vmatrix}\not=0.\]Thus we can also have $g\equiv \int_{0}^{\frac{1}{2}} \sqrt{\alpha \gamma(\eta)} \mathrm{d} \eta$ near $\{0\}\times (-R,R)$, which is contradictory.

(3) $\partial_y g=0,\lambda=0$, i.e.,\[ J= \begin{vmatrix}
0&1\\
0&1
\end{vmatrix}=0,\]which can let second inequality hold its equality. Hence $g=\int_{0}^{\phi(x,y)} \sqrt{\alpha \gamma(\eta)} \mathrm{d} \eta $, the $\partial_y g=0$ is equal to $\partial_y \phi(x,y)=0$ near $\{0\}\times (-R,R)$. Due to the arbitrariness of $R$ and extension theorem, we have $\partial_y \phi(x,y)=0$ in $\mathbb{R}^2$.

In addition, the third inequality hold its equality if and only if $$\lim_{{R}\to +\infty}\frac{1}{2R}\int_{{\mathbb{R}}}[\Gamma(\phi(x,R))-\Gamma(\phi(x,-R))]\,\D x=0,$$ which holds when $\partial g=0$.

In  summary, we have $E_{\mathrm{PN}}^{I I}[\phi]\ge\int_{0}^{1} \sqrt{\alpha \gamma(\eta)} \mathrm{d} \eta $, and the equality holds if and only if \begin{equation}
\left\{\begin{array}{l}{\frac{1}{2} \sqrt{\alpha}|\nabla \phi|=\sqrt{\gamma \circ \phi}}\\ {\partial_y \phi(x,y)=0} .\end{array}\right. \label{equation ode}
\end{equation}
We have shown that $\phi(x,y)$ only depends on $x$ when $u$ is the $X_0$-global minimizer of the energy of $E_{\text{PN}}$ and $\partial_x\phi\ge 0$ is obvious. Thus we can only consider the following equation:\begin{equation}\frac{\,\D \phi}{\,\D x}=\frac{2}{\sqrt{\alpha}} \sqrt{\gamma(\phi)}, \quad \phi(0)=\frac{1}{2}.\label{Ode}\end{equation}

Now we have  reduced the two dimensional problem to a one dimensional problem. Then Theorem 1 follows Lemma \ref{gamma re} and \cite[Theorem 1]{luo2018atomistic}.
\end{proof}

In the proof of Theorem \ref{Theorem 1}, we  a two-dimensional problem to a one-dimensional problem. Thus we can get the stability of  the solution of the PN model from \cite{luo2018atomistic}:
\newtheorem{mypro}{Proposition}
\begin{mypro}\label{stability of PN}
 Let $\vv$ be the dislocation solution of PN model, then there exist $\vartheta>0$ such that for any $\vf\in X_0$, we have \begin{equation}
\left \langle \delta^2E_{\text{PN}}[\vv]\vf,\vf \right \rangle_0\ge\vartheta
||\vf||_{X_0}^2.\label{pro 1}
\end{equation}
\end{mypro}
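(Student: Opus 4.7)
\textbf{Proof proposal for Proposition~\ref{stability of PN}.} The plan is to reduce the two-dimensional stability inequality to the one-dimensional stability already established for the simple-lattice PN model in \cite{luo2018atomistic}, using the structural information about $\vv$ obtained in Theorem~\ref{Theorem 1}. Specifically, Theorem~\ref{Theorem 1} gives $v^+(x,y)=-v^-(x,y)=v(x)$ with $v'>0$ and $v\in W^{5,\infty}(\mathbb{R})$. Hence every coefficient appearing in the linearized operator (the elastic constants $\alpha_1,\alpha_2$ and the Hessian $\gamma''(\vv^\perp(x))$ evaluated along the solution) is independent of $y$.

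First I would write out $\langle\delta^2E_{\text{PN}}[\vv]\vf,\vf\rangle_0$ explicitly. Using \eqref{elas formula} the elastic contribution separates as $\alpha_1(\partial_x f^\pm)^2+\alpha_2(\partial_y f^\pm)^2$, and differentiating $E_{\text{mis}}[\vphi]=\int\gamma(\vphi)\,\D x\,\D y$ twice at $\vphi=\vv^\perp$ in the direction $\vf^\perp$ yields $\gamma''(v(x))(f^\perp(x,y))^2$ (only the $(1,1)$ Hessian entry enters, since displacements are constrained to the $x$-direction). Thus
\[
\langle\delta^2E_{\text{PN}}[\vv]\vf,\vf\rangle_0
=\lim_{R\to\infty}\frac{1}{2R}\int_{-R}^{R}\!\!\int_{\mathbb{R}}\Big[\alpha_1\sum_{\pm}(\partial_x f^{\pm})^2+\alpha_2\sum_{\pm}(\partial_y f^{\pm})^2+\gamma''(v(x))(f^\perp)^2\Big]\,\D x\,\D y.
\]

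Next I would apply the one-dimensional stability result at almost every fixed $y$. For such $y$, the slice $x\mapsto f^\pm(x,y)$ belongs to the 1D analogue of $X_0$ (the trace condition $f^\pm(0,y)=0$ and the decay as $x\to\pm\infty$ are inherited from the definition of $X_0$), so the analogue of Proposition~\ref{stability of PN} in the one-dimensional simple-lattice setting, established in \cite{luo2018atomistic}, gives a constant $\vartheta_1>0$, independent of $y$, with
\[
\int_{\mathbb{R}}\!\Big[\alpha_1\sum_{\pm}(\partial_x f^{\pm})^2+\gamma''(v(x))(f^\perp)^2\Big]\D x \;\ge\; \vartheta_1\!\int_{\mathbb{R}}\!\Big[\alpha_1\sum_{\pm}(\partial_x f^{\pm})^2+\tfrac{4\sqrt{3}}{3}(f^\perp)^2\Big]\D x.
\]
Because the constant is uniform in $y$, Fubini and the averaging limit $\lim_R\frac{1}{2R}\int_{-R}^R$ transfer this bound to the full quadratic form, yielding the desired estimate on the $x$-derivative and $f^\perp$ contributions to $\|\vf\|_{X_0}^2$. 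The remaining $\alpha_2(\partial_y f^\pm)^2$ pieces appear identically on both sides of \eqref{pro 1}, so they can be absorbed by taking $\vartheta=\min\{\vartheta_1,1\}$.

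The main obstacle I anticipate is the slice-wise application of the 1D stability: I must check that $f\in X_0$ implies that for a.e.\ $y$ the slice $f(\cdot,y)$ satisfies the hypotheses of the 1D stability theorem (finiteness of the 1D energy-norm, the zero-trace condition at $x=0$, and the correct decay at $\pm\infty$), and that the constant $\vartheta_1$ from \cite{luo2018atomistic} is genuinely $y$-independent, which follows because the coefficient $\gamma''(v(x))$ and the shape of the 1D operator do not depend on $y$. Once this measure-theoretic bookkeeping is in place, the conclusion is immediate and reduces to \cite[Proposition on stability of PN]{luo2018atomistic}, mirroring the dimensional reduction already used for existence in the proof of Theorem~\ref{Theorem 1}.
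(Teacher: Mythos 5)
Your proposal is correct and follows essentially the same route as the paper: the paper likewise applies the one-dimensional stability result of Luo et al.\ slice-wise in $y$, observes that the constant is uniform in $y$ because $v$ depends only on $x$, and then averages $\frac{1}{2R}\int_{-R}^{R}\,\D y$ to obtain \eqref{pro 1}. The only cosmetic difference is that the paper carries the $\alpha_2(\partial_y f^\pm)^2$ terms through both sides of the slice inequality rather than absorbing them separately at the end.
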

\begin{proof}
	Thanks to \cite[Proposition 3]{luo2018atomistic}, for all $y\in\mathbb{R}$, $v(\cdot,y)$ satisfies:\begin{align}
	&\int_{\mathbb{R}}\alpha_1 (\partial_x f^+(x,y))^2+\alpha_2(\partial_y f^+(x,y))^2+2\gamma''(2v)(f^+(x,y))^2\,\D x\notag\\\ge&\vartheta(y)\left[\int_{\mathbb{R}}\alpha_1 (\partial_x f^+(x,y))^2+\alpha_2(\partial_y f^+(x,y))^2+\frac{8\sqrt{3}}{3}(f^+(x,y))^2\,\D x\right],\label{one dim}
	\end{align}
	where $\vartheta(y)$ is the constant depending on $y$. Due to uniform of $v$ in $y$-direction, $\vartheta(y)$ is also uniform in $y$-direction. Thus we calculate the average value of both side in (\ref{one dim}) along $y$-direction and obtain (\ref{pro 1}).
\end{proof}

\section{Consistency of PN Model}

In this section, we prove the consistency of PN model.

For simplicity of notations, we introduce some notations:
\begin{align}
U_{k, \vs}&:=\sup _{|\vxi-\vs| \leq 1}\left|\nabla^{k} U(\vxi)\right|, \quad \vs \in \mathbb{L},\label{U max}\\
V_{\vbeta,\vs_1,\vs_2}&:=\sup _{|\vxi_i-\vs_i| \leq 1}\left|\nabla^{\vbeta} V(\vxi_1,\vxi_2)\right|, \quad i=1,2,~~\vbeta \in \mathbb{Z}^2,\\\notag
\vv_{k,\vs_1,\vs_2}&:=\sup_{|\vx-\vs_1|<|\vs_2|}\left|\nabla^{\vbeta} v(x,y)\right|, \quad |\vbeta|=k,
\end{align}
where $\vv=(v,0)$.
\begin{mylem}\label{A4 come}
	Suppose that Assumptions A2--A4 hold and $k=1,2\cdots,5$. Then there exist constants $H_1, H_2$ such that\begin{equation}
	\sum_{\vs}|\vs|^{k+1}U_{k,\vs}\le H_1,\quad\sum_{\langle\vs_1,\vs_2\rangle}|\vs_1|^{a_1}|\vs_2|^{a_2}V_{\valpha,\vs_1,\vs_2}\le H_2,\label{le1}
	\end{equation}
	where $a_i\le |\beta_i|+4$ and $\beta_i$ is defined in A4 for $i=1,~2$.
\end{mylem}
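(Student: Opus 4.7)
The plan is to decompose each sum into a finite core piece near the lattice origin, which is controlled by continuity (Assumption A3), and a tail piece, where the decay in Assumption A4 beats the polynomial weight against the two--dimensional lattice density.

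For the first bound, fix a cutoff $R_0>2$. In the tail region $\{|\vs|>R_0\}$ every $\vxi$ entering the supremum defining $U_{k,\vs}$ satisfies $|\vxi|\ge|\vs|-1\ge|\vs|/2>|\vp|$, so Assumption A4 applies pointwise inside the sup and gives $U_{k,\vs}\le 2^{k+4+\theta}|\vs|^{-k-4-\theta}$. Multiplying by the weight $|\vs|^{k+1}$ produces the summand $|\vs|^{-3-\theta}$. Since the number of lattice points in an annulus of unit width at radius $r$ is $O(r)$, the tail sum is controlled by $\int_{R_0}^\infty r\cdot r^{-3-\theta}\,\D r=\int_{R_0}^\infty r^{-2-\theta}\,\D r$, which converges for any $\theta>0$. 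In the core region $\{|\vs|\le R_0\}$ there are only finitely many lattice points, each contributing a finite $U_{k,\vs}$ since by Assumption A3 the function $\nabla^k U$ is continuous on $\mathbb{R}^2\setminus\{\vzero\}$ and the relevant closed ball avoids the singularity for the lattice points being summed; for the handful of closest neighbors one interprets the sup as restricted to the regular set.

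For the second bound, carry out the analogous splitting in each of the two variables. Choose $R_1>1+|\vp|$ and partition the sum into three regions: (i) both $|\vs_1|,|\vs_2|>R_1$, (ii) exactly one of them is at most $R_1$ while the other exceeds $R_1$, and (iii) both are at most $R_1$. In region (i), Assumption A4 yields $V_{\vbeta,\vs_1,\vs_2}\le C|\vs_1|^{-\beta_1-6-\theta}|\vs_2|^{-\beta_2-6-\theta}$, and the hypothesis $a_i\le\beta_i+4$ turns the summand into $C|\vs_1|^{-2-\theta}|\vs_2|^{-2-\theta}$, a product of two independent convergent two--dimensional tail sums by the same annular counting argument. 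In region (ii), say $|\vs_1|\le R_1$ and $|\vs_2|>R_1$, the outer sum in $\vs_1$ is finite, while applying A4 only in $\vxi_2$ (and bounding $\nabla^\vbeta V$ in $\vxi_1$ by its maximum over a compact regular set via A3) produces $C'(\vs_1)|\vs_2|^{-\beta_2-6-\theta}$, whose sum in $\vs_2$ is again a convergent tail. Region (iii) contributes finitely many finite summands.

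The proof is essentially bookkeeping, resting on the two key estimates $U_{k,\vs}=O(|\vs|^{-k-4-\theta})$ and $V_{\vbeta,\vs_1,\vs_2}=O(|\vs_1|^{-\beta_1-6-\theta}|\vs_2|^{-\beta_2-6-\theta})$ for lattice points far from the singular set, combined with the two--dimensional lattice density $\#\{|\vs|\sim r\}=O(r)$. The one point I would check most carefully is that the suprema defining $U_{k,\vs}$ and $V_{\vbeta,\vs_1,\vs_2}$ are finite at the lattice points closest to the origin, where the ball of radius one in the definition can approach the singular loci of $U$ and $V$; this is handled by A3, which gives $C^5$ regularity away from the singularity, together with the interpretation that the sup is taken on the regular part of the ball. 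No step is analytically delicate; the role of the lemma is simply to package A4 into the moment bounds needed downstream.
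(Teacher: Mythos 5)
Your proof is correct and follows essentially the same route as the paper: apply Assumption A4 to reduce each summand to $|\vs|^{-2-\theta}$ (or $|\vs_1|^{-2-\theta}|\vs_2|^{-2-\theta}$) and invoke summability over the two-dimensional lattice. The paper's version is a one-line estimate that silently applies A4 to every lattice point; your explicit core/tail split, with A3 controlling the finitely many terms where the unit ball in the supremum approaches the singular set, is a more careful rendering of the same argument.
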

\begin{proof}
	Without loss of generality, we just prove
	$$\sum_{\langle\vs_1,\vs_2\rangle}|\vs_1|^{a_1}|\vs_2|^{a_2}V_{\valpha,\vs_1,\vs_2}\le H_2.$$
	
	As for A4, we have \begin{align}
	\sum_{\langle\vs_1,\vs_2\rangle}|\vs_1|^{a_1}|\vs_2|^{a_2}V_{\valpha,\vs_1,\vs_2}\le& \sum_{\langle\vs_1,\vs_2\rangle}|\vs_1|^{a_1}|\vs_2|^{a_2} |\vs_1|^{-a_1-2-\theta}|\vs_2|^{-a_2-2-\theta}\notag\\\le&\sum_{\vs_1}|\vs_1|^{-2-\theta}\sum_{\vs_2}|\vs_2|^{-2-\theta}\le H_2.\notag
	\end{align}
\end{proof}
\begin{mylem}\label{s come}
	Suppose that A1--A5 hold, then for $k\le 4$, we have
	$$\varepsilon^2\lim_{R\to+\infty}\frac{1}{2R}\sum_{\vx\in\mathbb{L}_{R}}\vv_{k,\vx,\vs}\le C|\vs|,$$
	where $\vv$ is the dislocation solution in Theorem \ref{Theorem 1}.
\end{mylem}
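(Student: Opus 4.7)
The plan is to reduce the two-dimensional lattice average to a one-dimensional Lebesgue integral and then apply a maximal-function-type estimate. By Theorem~\ref{Theorem 1}, $\partial_y v^{\pm}\equiv 0$, so $v^{+}$ depends only on $x$ and $|\nabla^{\boldsymbol{\beta}}v(y_{1},y_{2})|=|\partial_x^{k}v^{+}(y_{1})|$ for every multi-index $\boldsymbol{\beta}$ with $|\boldsymbol{\beta}|=k\ge 1$. Consequently the supremum in the definition of $\vv_{k,\vx,\vs}$ collapses to
\[
\vv_{k,\vx,\vs}=\sup_{|\tilde x-x_{1}|<|\vs|}|\partial_x^{k}v^{+}(\tilde x)|=:G_{|\vs|}(x_{1}),
\]
a function of the first coordinate of $\vx$ alone.

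Since $G_{|\vs|}$ is independent of $y$ and the rescaled triangular lattice $\mathbb{L}$ has unit-cell area $\tfrac{\sqrt 3}{2}\varepsilon^{2}$, a Riemann-sum comparison (with decay of $G_{|\vs|}$ at infinity ensured by $v^{+}(\cdot,y)\in W^{4,1}(\mathbb{R})$ from Theorem~\ref{Theorem 1}) yields
\[
\varepsilon^{2}\sum_{\vx\in\mathbb{L}_{R}}G_{|\vs|}(x_{1})=\tfrac{4R}{\sqrt 3}\int_{\mathbb{R}}G_{|\vs|}(x)\,\D x+o(R).
\]
Dividing by $2R$ and letting $R\to\infty$ reduces the claim to the one-dimensional inequality $\int_{\mathbb{R}}G_{|\vs|}(x)\,\D x\le C|\vs|$.

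Setting $g=|\partial_x^{k}v^{+}|$ and $r=|\vs|$, the fundamental theorem of calculus gives $|g(y)-g(x)|\le\int_{x-r}^{x+r}|g'(z)|\,\D z$ whenever $|y-x|<r$, so $G_{r}(x)\le g(x)+\int_{x-r}^{x+r}|g'(z)|\,\D z$. Fubini then produces
\[
\int_{\mathbb{R}}G_{r}(x)\,\D x\le\|\partial_x^{k}v^{+}\|_{L^{1}}+2r\|\partial_x^{k+1}v^{+}\|_{L^{1}}.
\]
For $1\le k\le 3$ both terms on the right are finite by $v^{+}(\cdot,y)\in W^{4,1}(\mathbb{R})$. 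Since $|\vs|$ is bounded below by the (rescaled) lattice spacing, the constant term $\|\partial_x^{k}v^{+}\|_{L^{1}}$ is absorbed into $C|\vs|$, giving the desired bound.

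The delicate step is the borderline case $k=4$: Theorem~\ref{Theorem 1} supplies only $\partial_x^{5}v^{+}\in L^{\infty}(\mathbb{R})$, so $L^{1}$ control must come from elsewhere. I would recover it from the first-order ODE $\phi'=\tfrac{2}{\sqrt{\alpha}}\sqrt{\gamma(\phi)}$, $\phi=2v^{+}$, derived in the proof of Theorem~\ref{Theorem 1}. Iterated differentiation expresses $\partial_x^{j}\phi$ as a polynomial in $\phi',\dots,\partial_x^{j-1}\phi$ with coefficients that are derivatives of $\gamma$ evaluated at $\phi$. Linearising at the minima $\phi\in\{0,1\}$, which are nondegenerate by Assumption~A6, yields exponential convergence of $\phi$ to its asymptotic values and hence exponential (in particular $L^{1}$) decay of every $\partial_x^{j}\phi$. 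This is the main obstacle; all other steps are routine once the one-dimensional reduction from Theorem~\ref{Theorem 1} is in hand.
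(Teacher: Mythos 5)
Your proposal is correct, and its first step coincides exactly with the paper's: use $\partial_y v^\pm=0$ from Theorem~\ref{Theorem 1} to collapse the two-dimensional lattice average to a one-dimensional estimate. Where you diverge is what happens after that reduction. The paper's proof stops there and simply invokes Lemma~7 of \cite{luo2018atomistic}, which supplies the one-dimensional bound for the simple-lattice dislocation profile; you instead reprove that bound from scratch via the pointwise estimate $G_r(x)\le g(x)+\int_{x-r}^{x+r}|g'|$, Fubini, and the $W^{4,1}$ regularity, absorbing the $r$-independent term using $|\vs|\ge 1$ for $\vs\in\mathbb{L}^*$ (a restriction that is indeed implicit in how the lemma is applied, e.g.\ in \eqref{o4too2}). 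Your treatment of the borderline case $k=4$ is also sound: the stated regularity $v^+(\cdot,y)\in W^{5,\infty}\cap W^{4,1}$ does not directly give $\partial_x^5 v^+\in L^1$, and extracting exponential decay of all derivatives from the first-order ODE \eqref{Ode} together with the nondegeneracy of $\gamma$ at its minima (Assumption A6, Lemma~\ref{gamma re}) is exactly the mechanism that makes the cited Lemma~7 of \cite{luo2018atomistic} work in the first place. In fact, once you have that exponential decay you could dispense with the case split entirely, since it yields $L^1$ control of every derivative up to the regularity of $\gamma$. The trade-off is transparency versus economy: your version is self-contained and makes visible which properties of the dislocation profile are actually used, while the paper's version is a two-line reduction that leans on the earlier simple-lattice result.
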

\begin{proof}
	Since in Theorem \ref{Theorem 1}, we have proved $\partial_y v^\pm=0$ and $\vv=(v,0)$. Thus two dimensional problem can be reduced into one dimensional problem. Thanks to \cite[Lemma 7]{luo2018atomistic}, we have proved Lemma \ref{s come}.
\end{proof}

In the following analysis of the paper, the constant $C$ may be different from line to line.

\begin{mypro}\label{consistency pro}
	Suppose that Assumptions A1--A6 hold. Let $\vv$ be the dislocation solution of the PN model in Theorem 1, then there exist $C$ and $\varepsilon_0$, when $0<\varepsilon<\varepsilon_0$ and $\vf\in X_\varepsilon$ we have
	\begin{equation}
	\Big|\langle\delta E_{\text{a}}[\vv]-\delta E_{\text{PN}}[\vv],\vf\rangle_\varepsilon\Big|=\Big|\langle\delta E_{\text{a}}[\vv],\vf\rangle_\varepsilon\Big|\le C\varepsilon^2\|\vf\|_{X_\varepsilon},\label{consistency}
	\end{equation}
	where $C$ is independent on $\varepsilon$.
\end{mypro}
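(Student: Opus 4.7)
Since $\vv$ is the solution of the PN model, the Euler--Lagrange equation gives $\delta E_{\text{PN}}[\vv]=0$, so the estimate \eqref{consistency} reduces to controlling $|\langle \delta E_{\text{a}}[\vv],\vf\rangle_\varepsilon|$ alone. The plan is to Taylor-expand every contribution to $\delta E_{\text{a}}[\vv]$ around the continuum PN displacement $\vv$, absorb the leading terms into $\delta E_{\text{PN}}[\vv]$ (which vanishes), and show that the remainders are $O(\varepsilon^2)\|\vf\|_{X_\varepsilon}$ uniformly in $\vf$. Because $\vv=(v,0)$ is $y$-independent and $v^+(\cdot,y)\in W^{5,\infty}(\mathbb{R})\cap W^{4,1}(\mathbb{R})$ by Theorem~\ref{Theorem 1}, the displacement field is as smooth as we need to carry this out to fourth order.

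First I would split $\delta E_{\text{a}}[\vv]$ into its intralayer (AAA, BBB, AAB, ABB) and interlayer ($A^+A^-$, $A^+B^-$, $B^+A^-$, $B^+B^-$) contributions using formula \eqref{at re}. For each intralayer three-body term such as $V\bigl(\vs_1+D_{\vs_1}\vv^{A\pm}_{\vx},\,\vs_2+D_{\vs_2}\vv^{A\pm}_{\vx}\bigr)$, I would expand in the difference $D_{\vs_i}\vv^{A\pm}_{\vx}-(\vs_i\cdot\nabla)\vv^\pm$, which is $O(\varepsilon^2|\vs_i|^2)$ by a Taylor remainder estimate and the smoothness of $\vv$. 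The zeroth-order piece in $D\vv$ contributes the reference energy (independent of $\vu$) and drops out of the variation; the first-order piece reproduces exactly the gradient of the continuum elastic energy appearing in $\delta E_{\text{PN}}[\vv]$ (by Lemma~\ref{no xy}, matching \eqref{elas formula} and \eqref{elas no}); the second-order piece is the error. The AAB and ABB terms are handled identically, expanding the Taylor series around the reference vectors $\vs_1\pm\vp$ rather than $\vs_1$. For the interlayer terms, the overall factor $\varepsilon^2$ already present in \eqref{at re} plus the identification of the Riemann-sum approximation to the $\gamma$-surface integral in \eqref{eqn:misfit00} produces an $O(\varepsilon^2)$ discrepancy.

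The estimates on the remainders follow the same philosophy as in Ref.~\cite{luo2018atomistic}. Schematically, each error term has the form
\begin{equation*}
\varepsilon^2\lim_{R\to\infty}\frac{1}{2R}\sum_{\vx\in\mathbb{L}_R}\sum_{\langle\vs_1,\vs_2\rangle} R_{\vs_1,\vs_2}(\vx)\cdot D_{\vs_i}\vf^{\kappa\pm}_\vx,
\end{equation*}
where $R_{\vs_1,\vs_2}(\vx)$ is bounded by a product of a derivative of $V$ evaluated near $(\vs_1,\vs_2)$ and a second-order derivative of $\vv$ at points within distance $O(|\vs_i|)$ of $\vx$. Applying Cauchy--Schwarz in the discrete inner product $(\cdot,\cdot)_\varepsilon$, the difference-quotient factor $D_{\vs_i}\vf^{\kappa\pm}_\vx$ contributes $\|\vf\|_{X_\varepsilon}$, and the residual sum is controlled by Lemma~\ref{A4 come} (fast decay of $V$ and $U$ against polynomial growth in $|\vs_i|$) combined with Lemma~\ref{s come} (summability of $y$-averaged derivatives of $\vv$). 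The symmetry hypothesis A2 is crucial for cancelling the odd-order Taylor remainders that would otherwise produce $O(\varepsilon)$ rather than $O(\varepsilon^2)$ errors; in particular, swapping $\vs\mapsto-\vs$ in the inner sums eliminates the first-order-in-$\varepsilon$ correction coming from the difference $D_{\vs}\vv-(\vs\cdot\nabla)\vv$.

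The main obstacle is the complex-lattice bookkeeping. Unlike the simple-lattice proof of \cite{luo2018atomistic}, the AAB/ABB three-body interactions couple the $A$ and $B$ sublattices via the non-vanishing shift $\vp$, so the Taylor base point must be chosen consistently (around $\vs_1\pm\vp$), and one must verify that the mixed linear term in $\nabla\vv$ produced by expanding $D^{\vp}_{\vs_1}\vv$ still reconstructs the AAB/ABB piece of \eqref{elas no} rather than leaving an unmatched $O(1)$ term. A parallel care is required for the four interlayer species, where the base-point shifts $\pm\vp$ change the point at which the $\gamma$-surface density is sampled. Once these sublattice-dependent expansions are aligned with the decomposition inherent in \eqref{eqn:elastic-atomistic} and \eqref{eqn:a-mis}, the remaining work is routine Cauchy--Schwarz plus the summability lemmas, and the bound $C\varepsilon^2\|\vf\|_{X_\varepsilon}$ drops out.
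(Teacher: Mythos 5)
Your overall architecture (use $\delta E_{\text{PN}}[\vv]=0$, Taylor-expand the atomistic first variation about the continuum fields, match the low-order terms with the PN energy density, and bound the remainders by Cauchy--Schwarz together with the decay and summability lemmas) is the same as the paper's, and your treatment of the AAA, BBB and $A^+A^-$, $B^+B^-$ contributions is fine, since those reduce to the simple-lattice argument of Ref.~\cite{luo2018atomistic}. The gap is in the cancellation mechanism you invoke for the AAB/ABB terms. You claim that Assumption A2 together with the substitution $\vs\mapsto-\vs$ in the inner sums eliminates the odd-order Taylor corrections. That works on a simple lattice because $\mathbb{L}^*$ is inversion-symmetric, but for the mixed three-body terms the reference vector is $\vs_1+\vp$, and under $\vs_1\mapsto-\vs_1$ it becomes $-\vs_1+\vp\neq-(\vs_1+\vp)$: the sublattice shift $\vp$ breaks the inversion symmetry, so the first- and third-order terms do \emph{not} pair off inside the AAB sum alone. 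Since an uncancelled $O(\varepsilon^3)$-per-atom term contributes only $O(\varepsilon)$ after the $y$-averaged summation (cf.\ the bookkeeping in \eqref{o4too2}), your argument as written yields first-order, not second-order, consistency.

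The paper closes this gap with two ideas you would need to supply. First, the $O(\varepsilon)$ terms are removed by grouping each AAB bond with its images under the $2\pi/3$ and $4\pi/3$ rotations of the hexagonal lattice (the computation of Lemma~\ref{no xy}), not by inversion; the $O(\varepsilon^2)$ terms then cancel by construction of the PN elastic energy, as you say. Second --- and this is the genuinely new step for the complex lattice --- the rotational symmetry is explicitly insufficient for the $O(\varepsilon^3)$ terms; instead each such term in the AAB part is paired with the corresponding term in the ABB part, the two bond configurations being centrally symmetric about $\vx+\frac{1}{2}\vp$, and the pair is shown to combine into an $O(\varepsilon^4)$ quantity (estimate \eqref{var3}), which then sums to $O(\varepsilon^2)$. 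A similar, though easier, cross-species regrouping is needed for the $A^+B^-$ and $B^+A^-$ misfit terms (the splitting into $R_{\text{mis},1}$ and $R_{\text{mis},2}$), which you dismiss as a Riemann-sum identification. Without these lattice-specific cancellations the proposal does not reach the claimed rate.
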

\begin{proof}
	As for the $\Big|\langle\delta E_{\text{a}}[\vv]-\delta E_{\text{PN}}[\vv],\vf\rangle_\varepsilon\Big|$, there are two parts, elastic energy parts and misfit energy parts. Furthermore, for the elastic energy parts, it can be divided into four parts, $AAA$, $AAB$, $ABB$ and $BBB$. For the $AAA$ and $BBB$ parts, they are simple lattice cases, which has been proved in \cite{luo2018atomistic} (second order  accuracy). The remaining question is to estimate $AAB$ and $ABB$ parts:\begin{align}
	&R_\text{elas}\notag\\=&\lim_{R\to+\infty}\frac{1}{4R}\sum_{\vx\in\mathbb{L}_{R}}\sum_{\langle\vs_1,\vs_2\rangle}\Big\{\left[\partial_{1}V(\vp+\vs_1+\varepsilon D_{\vs_1}^{\vp} \vv^{A\pm}_{\vx},\vs_2+{\varepsilon D_{\vs_2}\vv^{A\pm}_{\vx}})\right]\notag\\&\varepsilon D_{\vs_1}^{\vp} \vf^{A\pm}_{\vx}+\left[\partial_{2}V(\vp+\vs_1+\varepsilon D_{\vs_1}^{\vp} \vv^{A\pm}_{\vx},\vs_2+{\varepsilon D_{\vs_2}\vv^{A\pm}_{\vx}})\right]\varepsilon D_{\vs_2}\vf^{A\pm}_{\vx}\notag\\&\left[\partial_{1}V(-\vp+\vs_1+\varepsilon D_{\vs_1}^{-\vp} \vv^{B\pm}_{\vx},\vs_2+{\varepsilon D_{\vs_2}\vv^{B\pm}_{\vx}})\right]\varepsilon D_{\vs_1}^{-\vp} \vf^{B\pm}_{\vx}\notag\\&+\left[\partial_{2}V(-\vp+\vs_1+\varepsilon D_{\vs_1}^{-\vp} \vv^{B\pm}_{\vx},\vs_2+{\varepsilon D_{\vs_2}\vv^{B\pm}_{\vx}})\right]\varepsilon D_{\vs_2}\vf^{B\pm}_{\vx}\Big\}\notag\\&+\frac{\varepsilon^2}{2}\sum_{\langle\vs_1,\vs_2\rangle}\sum_{i+j=2}\tbinom{2}{i}\partial_{ij}V(\vs_1+ \vp,\vs_2)\big[(\vs_1+\vp)\cdot(\nabla )\big]^i(\vs_2\cdot\nabla)^j \vv^{A\pm}_{\vx}\vf^{A\pm}_{\vx}\notag\\&+\frac{\varepsilon^2}{2}\sum_{\langle\vs_1,\vs_2\rangle}\sum_{i+j=2}\tbinom{2}{i}\partial_{ij}V(\vs_1+ \vp,\vs_2)\big[(\vs_1+\vp)\cdot(\nabla )\big]^i(\vs_2\cdot\nabla)^j \vv^{B\pm}_{\vx}\vf^{B\pm}_{\vx}.
	\end{align}
	The first four terms are from $\delta E_{\text{a}}[\vv]$ and the last two terms is from $\delta E_{\text{PN}}[\vv]$.
	
	As for the misfit energy parts, there are four parts, $A^+A^-$ (the inter-layer interactions between the atoms $A$ in the upper layer and the atoms $A$ in the lower layer), $A^+B^-$, $B^+A^-$ and $B^+B^-$. $A^+A^-$ and $B^+B^-$ are the simple lattice cases. Therefore we restrict our attention on $A^+B^-$ and $B^+A^-$. We estimate $B^+A^-$ only:\begin{align}
	&R_\text{mis}\notag\\=&\lim_{R\to+\infty}\frac{\varepsilon^2}{2R}\sum_{\vx\in\mathbb{L}_{R}}\sum_{\vs}\Big[U'(\vs+\vp-\vd+\vv_{\vx+\vs}^{B+}-\vv_{\vx}^{A-})(\vf_{\vx+\vs}^{B+}-\vf_{\vx}^{A-})\notag\\&+U'(\vs+\vp-\vd+\vv_{\vx}^{B+}-\vv_{\vx-\vs}^{A-})(\vf_{\vx}^{B+}-\vf_{\vx-\vs}^{A-})\notag\\&-U'(\vs+\vp-\vd+\vv_{\vx}^{A+}-\vv_{\vx}^{A-})(\vf_{\vx}^{A+}-\vf_{\vx}^{A-})\notag\\&-U'(\vs+\vp-\vd+\vv_{\vx}^{B+}-\vv_{\vx}^{B-})(\vf_{\vx}^{B+}-\vf_{\vx}^{B-})\Big]
	\end{align}
	The first two terms are from $\delta E_{\text{a}}[\vv]$ and the last two terms are from $\delta E_{\text{PN}}[\vv]$.
	
	1. \textbf{$R_\text{elas}$:}
	
	The calculation is too hard if we calculate $R_\text{elas}$ directly as \cite{luo2018atomistic}. However, we just need to estimate the order of $\varepsilon$. In other words, we show $\varepsilon,~\varepsilon^2$ and $\varepsilon^3$ order terms disapper in $R_\text{elas}$. Without loss the generality, we consider one term from $AAB$ interactions in atomistic model: \begin{align}
	&\lim_{R\to+\infty}\frac{1}{4R}\sum_{\vx\in\mathbb{L}_{R}}\sum_{\langle\vs_1,\vs_2\rangle}\notag\\&\left[\partial_{1}V(\vp+\vs_1+\varepsilon D_{\vs_1}^{\vp} \vv^{A\pm}_{\vx},\vs_2+{\varepsilon D_{\vs_2}\vv^{A\pm}_{\vx}})\right]\varepsilon D_{\vs_1}^{\vp} \vf^{A\pm}_{\vx}\notag.\label{consider term}
	\end{align}
	
	Due to Lemma \ref{no xy}, we notice the pairwise $AB$ has rotational symmetry:
	\begin{align}
	&\lim_{R\to+\infty}\frac{1}{4R}\sum_{\vx\in\mathbb{L}_{R}}\sum_{\langle\vs_1,\vs_2\rangle}\notag\\&\left[\partial_{1}V(\vp+\vs_1+\varepsilon D_{\vs_1}^{\vp} \vv^{A\pm}_{\vx},\vs_2+{\varepsilon D_{\vs_2}\vv^{A\pm}_{\vx}})\right] \vf^{A\pm}_{\vx}\notag\\=&\lim_{R\to+\infty}\frac{1}{12R}\sum_{\vx\in\mathbb{L}_{R}}\sum_{\langle\vs_1,\vs_2\rangle}\Big[\partial_{1}V(\vp+\vs_1+\varepsilon D_{\vs_1}^{\vp} \vv^{A\pm}_{\vx},\vs_2+{\varepsilon D_{\vs_2}\vv^{A\pm}_{\vx}})\notag\\&+\partial_{1}V(\vp+\vs_{11}+\varepsilon D_{\vs_{11}}^{\vp} \vv^{A\pm}_{\vx},\vs_{21}+\varepsilon D_{\vs_{21}}\vv^{A\pm}_{\vx})\notag\\&+\partial_{1}V(\vp+\vs_{12}+\varepsilon D_{\vs_{12}}^{\vp} \vv^{A\pm}_{\vx},\vs_{22}+\varepsilon D_{\vs_{22}}\vv^{A\pm}_{\vx})\Big] \vf^{A\pm}_{\vx}
	\end{align}
	where the definitions of $\vs_{11}$, $\vs_{12}$, $\vs_{21}$ and $\vs_{22}$ are from Lemma \ref{no xy} (by rotation of $2\pi/3$ and $4\pi/3$ for $\vs_1$ and $\vs_2$). Thanks to the $\vf^{A+}_{\vx}=-\vf^{A-}_{\vx}$ and $\vv^{A+}_{\vx}=-\vv^{A-}_{\vx}$, there is no even order terms in the Taylor expansion at $(\vp+\vs_1,\vs_2)$:\begin{align}
	&\lim_{R\to+\infty}\frac{1}{4R}\sum_{\vx\in\mathbb{L}_{R}}\sum_{\langle\vs_1,\vs_2\rangle}\notag\\&\left[\partial_{1}V(\vp+\vs_1+\varepsilon D_{\vs_1}^{\vp} \vv^{A\pm}_{\vx},\vs_2+{\varepsilon D_{\vs_2}\vv^{A\pm}_{\vx}})\right]\varepsilon D_{\vs_1}^{\vp} \vf^{A\pm}_{\vx}\notag\\=&\lim_{R\to+\infty}\frac{1}{6R}\sum_{\vx\in\mathbb{L}_{R}}\sum_{\langle\vs_1,\vs_2\rangle}\Bigg\{\partial_{20}V(\vp+\vs_1,\vs_2)\varepsilon( D_{\vs_{1}}^{\vp} \vv^{A+}_{\vx}+ D_{\vs_{11}}^{\vp} \vv^{A+}_{\vx}+ D_{\vs_{12}}^{\vp} \vv^{A+}_{\vx})\notag\\&+\partial_{11}V(\vp+\vs_1,\vs_2)(\varepsilon D_{\vs_{2}} \vv^{A+}_{\vx}+\varepsilon D_{\vs_{21}} \vv^{A+}_{\vx}+\varepsilon D_{\vs_{22}} \vv^{A+}_{\vx})\notag\\&+\varepsilon^3\sum_{\vt}\partial_{\vt}V(\vxi)\tbinom{3}{t_1}\Big[(D_{\vs_{1}}^{\vp} \vv^{A+}_{\vx})^{t_1}(D_{\vs_{2}} \vv^{A+}_{\vx})^{t_2}+(D_{\vs_{11}}^{\vp} \vv^{A+}_{\vx})^{t_1}(D_{\vs_{21}} \vv^{A+}_{\vx})^{t_2}\notag\\&+(D_{\vs_{21}}^{\vp} \vv^{A+}_{\vx})^{t_1}(D_{\vs_{22}} \vv^{A+}_{\vx})^{t_2}\Big]\Bigg\}\vf^{A+}_{\vx},\label{change relas}
	\end{align}
	where $\vxi$ is a four dimensional vector, $$t\in\left\{\vt=(t_1+1,t_2)\mid t_1,t_2\in\mathbb{N},t_1\ge0,t_2\ge0,t_1+t_2=3\right\}.$$ Due to the calculate in Lemma \ref{no xy} (the symmetric of $\vs_i,~\vs_{i1},~\vs_{i2}$), we know $\varepsilon$ terms vanish in (\ref{change relas}). Furthermore, $\varepsilon^2$ terms vanish in $R_\text{elas}$ since the formula of elastic energy in PN model is from the second order terms of the atomistic model (the definition of (\ref{elas no})). In other words, the definition of elastic energy in Peierls--Nabarro model is to collect all the second order terms of intralayer interactions in atomistic model. Hence the second-order terms in $R_\text{elas}$ disappears naturally. The remained problem is to show $\varepsilon^3$ terms vanish in the $R_\text{elas}$. For $O(\varepsilon^4)$ terms, the sum of them is $O(\varepsilon^2)$ as (\ref{o4too2}). But by direct calculation, the the symmetric of $\vs_i,~\vs_{i1},~\vs_{i2}$ can not derives the disappearance of $\varepsilon^3$. However, for each $\varepsilon^3$ term in (\ref{consider term}), we are able to find the similar terms in the $ABB$ part. Now we restrict our attention to the $O(\varepsilon^3)$ terms in \begin{equation}\partial_{20}V(\vp+\vs_1,\vs_2)(\varepsilon D_{\vs_{1}}^{\vp} \vv^{A+}_{\vx}+\varepsilon D_{\vs_{11}}^{\vp} \vv^{A+}_{\vx}+\varepsilon D_{\vs_{12}}^{\vp} \vv^{A+}_{\vx})\vf^{A+}_{\vx},\end{equation}which can be denoted as \begin{equation}
	\partial_{20}V(\vp+\vs_1,\vs_2)|\vs_1|^3\varepsilon^3(\vh\cdot\nabla^3\vv^{A+}_{\vx})\vf^{A+}_{\vx},
	\end{equation}
	where $\vh$ is a constant vector and $\vh\cdot\nabla^3\vv^{A+}_{\vx}$ is a two-dimensional vector. In addition, in the $ABB$ part, the similar terms is \begin{equation}
	\partial_{20}V(\vp+\vs_1,\vs_2)|\vs_1|^3\varepsilon^3(-\vh\cdot\nabla^3\vv^{B+}_{\vx})\vf^{B+}_{\vx}.
	\end{equation}
	The reason why the direction of corresponding item is $-\vh$ is shown in Figure \ref{h}. As in Figure \ref{h}, three red lines are interactions in $AAB$ (\ref{consider term}) part and three blue lines are interactions in $ABB$ part and they are center symmetry of $\vx+\frac{1}{2}\vp$ from each other.
	\begin{figure}[h]
		\centering
		\scalebox{0.35}{\includegraphics{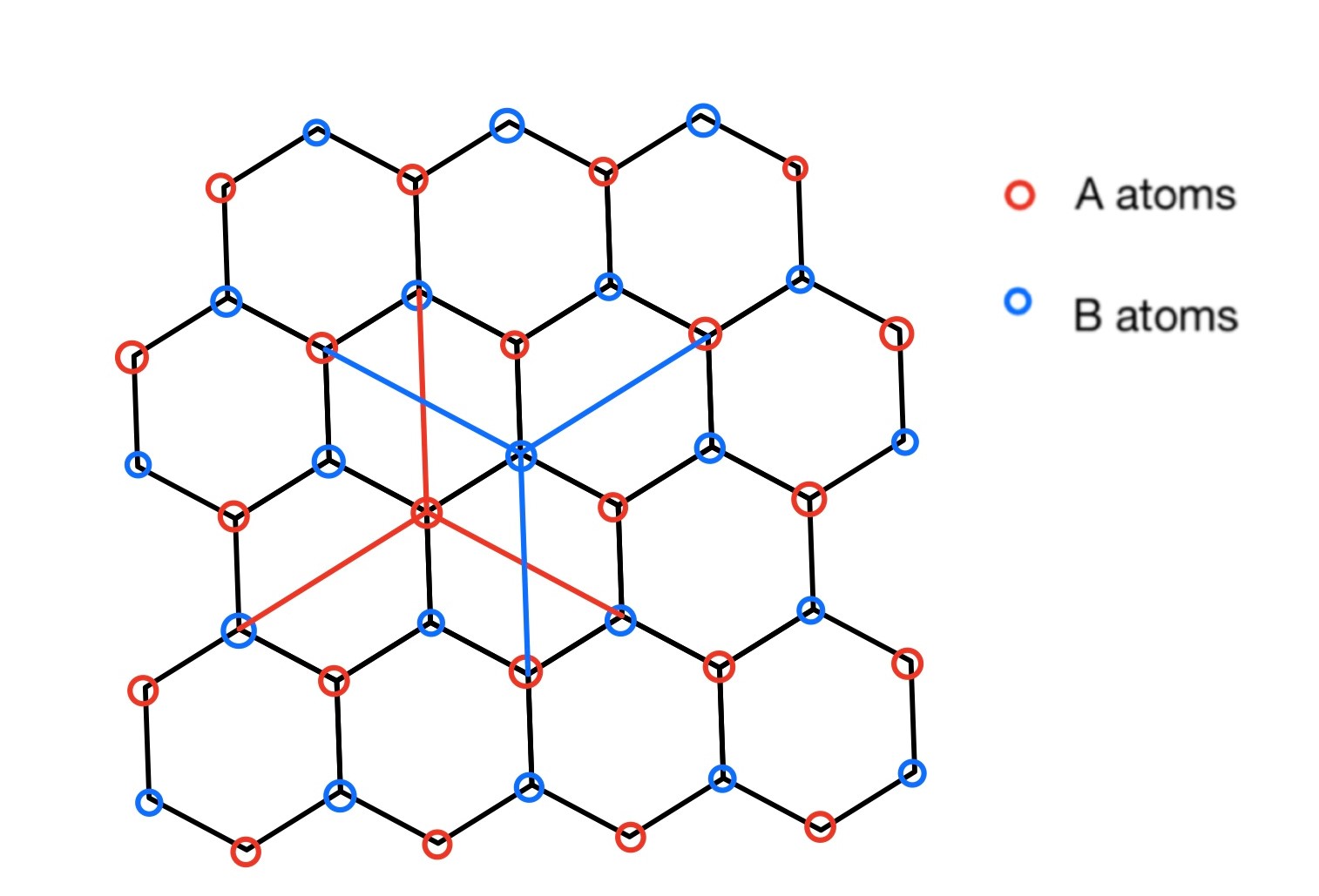}}
		\caption{Similar terms in the interactions in $ABB$ part.}
		\label{h}
	\end{figure}
	Combining those two terms, we have \begin{align}
	&\left|\partial_{20}V(\vp+\vs_1,\vs_2)\varepsilon^3\left[(-\vh\cdot\nabla^3\vv^{B+}_{\vx})\vf^{B+}_{\vx}+(\vh\cdot\nabla^3\vv^{A+}_{\vx})\vf^{A+}_{\vx}\right]\right|\notag\\\le&\left|\partial_{20}V(\vp+\vs_1,\vs_2)\varepsilon^3|\vs_1|^3\left[(-\vh\cdot\nabla^3\vv^{B+}_{\vx})\vf^{B+}_{\vx}+(\vh\cdot\nabla^3\vv^{A+}_{\vx})\vf^{B+}_{\vx}\right]\right|\notag\\&+\left|\partial_{20}V(\vp+\vs_1,\vs_2)\varepsilon^3|\vs_1|^3\left[(-\vh\cdot\nabla^3\vv^{A+}_{\vx})\vf^{B+}_{\vx}+(\vh\cdot\nabla^3\vv^{A+}_{\vx})\vf^{A+}_{\vx}\right]\right|\notag\\\le&\left|\partial_{20}V(\vp+\vs_1,\vs_2)\varepsilon^4|\vs_1|^3\left[C\vv_{4,\vx,\vs_1}\vf^{B+}_{\vx}+(\vh\cdot\nabla^3\vv^{A+}_{\vx})D^{\vp}_0\vf^{A+}_{\vx}\right]\right|.\label{var3}
	\end{align}
	Similar property can be used for other $\varepsilon^3$ terms. Thus we notice $\varepsilon^3$ terms reduce to $\varepsilon^4$ terms in $R_\text{elas}$. As for the $O(\varepsilon^4)$ terms, we have
	\begin{align}
	&\lim_{R\to+\infty}\frac{1}{2R}\sum_{\langle\vs_1,\vs_2\rangle}\sum_{\vx\in\mathbb{L}_{R}} C  \varepsilon^4 V_{(t_1+1,t_2),s_1,s_2}|\vs_1|^{t_1}|\vs_2|^{t_2}\vv_{4,\vx,\vs_2}\vf_{\vx}^{A+}\notag\\\le&\varepsilon^2\sum_{\langle\vs_1,\vs_2\rangle}V_{(t_1,t_2),s_1,s_2}|\vs_1|^{t_1}|\vs_2|^{t_2+1}\|\vf\|_{X_\varepsilon}\notag\\\le&C\varepsilon^2\|\vf\|_{X_\varepsilon},\label{o4too2}
	\end{align}
	where the last two inequalities are from Lemma \ref{A4 come}, Lemma \ref{s come} and $t_1+t_2=3$. Thus the $R_\text{elas}$ is a $O(\varepsilon^2)$ term.
	
	2. \textbf{$R_\text{mis}$:}
	
	We divide the $R_\text{mis}$ into two parts:\begin{align}
	&R_\text{mis,1}\notag\\=&\lim_{R\to+\infty}\frac{\varepsilon^2}{2R}\sum_{\vx\in\mathbb{L}_{R}}\sum_{\vs}\Big[U'(\vs+\vp-\vd+\vv_{\vx+\vs}^{B+}-\vv_{\vx}^{A-})\notag\\&+U'(\vs+\vp-\vd+\vv_{\vx}^{B+}-\vv_{\vx-\vs}^{A-})-U'(\vs+\vp-\vd+\vv_{\vx}^{A+}-\vv_{\vx}^{A-})\notag\\&-U'(\vs+\vp-\vd+\vv_{\vx}^{B+}-\vv_{\vx}^{B-})\Big](\vf_{\vx}^{A+}-\vf_{\vx}^{B-})\notag\\&R_\text{mis,2}\notag\\=&\lim_{R\to+\infty}\frac{\varepsilon^2}{2R}\sum_{\vx\in\mathbb{L}_{R}}\sum_{\vs}\Big[U'(\vs+\vp-\vd+\vv_{\vx+\vs}^{B+}-\vv_{\vx}^{A-})(\vf_{\vx+\vs}^{B+}-\vf_{\vx}^{B+})\notag\\&+U'(\vs+\vp-\vd+\vv_{\vx}^{B+}-\vv_{\vx-\vs}^{A-})(\vf_{\vx}^{A-}-\vf_{\vx-\vs}^{A-})\notag\\&-U'(\vs+\vp-\vd+\vv_{\vx}^{A+}-\vv_{\vx}^{A-})(\vf_{\vx}^{A+}-\vf_{\vx}^{B+})\notag\\&-U'(\vs+\vp-\vd+\vv_{\vx}^{B+}-\vv_{\vx}^{B-})(\vf_{\vx}^{A-}-\vf_{\vx}^{B-})\Big]
	\end{align}
	
	As for the $R_\text{mis,1}$, we apply Taylor expansion at $U'(\vs+\vp-\vd+\vv_{\vx}^{B+}-\vv_{\vx}^{A-})$ and obtain \begin{align}
	&R_\text{mis,1}\notag\\\le&\lim_{R\to+\infty}\frac{\varepsilon^2}{2R}\sum_{\vx\in\mathbb{L}_{R}}\sum_{\vs}\Big[U'(\vs+\vp-\vd+\vv_{\vx}^{B+}-\vv_{\vx}^{A-})\notag\\&\times|\vv_{\vx+\vs}^{B+}+\vv_{\vx-\vs}^{A+}-\vv_{\vx}^{A+}-\vv_{\vx}^{B+}|(\vf_{\vx}^{A+}-\vf_{\vx}^{B-})\notag\\&+U_{3,\vs}\left(|\vv_{\vx+\vs}^{B+}-\vv_{\vx}^{B+}|^2+|\vv_{\vx}^{A+}-\vv_{\vx-\vs}^{A+}|^2+2|\vv_{\vx}^{A+}-\vv_{\vx}^{B+}|^2\right)\notag\\&\times(\vf_{\vx}^{A+}-\vf_{\vx}^{B-}).
	\end{align}
	Due to \begin{align}
	|\vv_{\vx+\vs}^{B+}+\vv_{\vx-\vs}^{A+}-\vv_{\vx}^{A+}-\vv_{\vx}^{B+}|&=\left|\varepsilon(\vs+\vp)\cdot\nabla \vv_{\vx}^{A+}-\varepsilon(\vs+\vp)\cdot\nabla \vv_{\vx}^{B+}+O(\varepsilon)\right|\notag\\&=\left|\varepsilon^2(\vs+\vp)\cdot\nabla^2 \vv_{\vx}^{A+}\cdot(-\vp)+O(\varepsilon^2)\right|\notag\\|\vv_{\vx+\vs}^{B+}-\vv_{\vx}^{B+}|^2&=\varepsilon^2|\vs\cdot\nabla \vv_{\vs}^{B+}+O(\varepsilon)|^2,
	\end{align}
	each term in $R_\text{mis,1}$ is the $\varepsilon^4$ terms. Similarly with $\ref{o4too2}$, we know $R_\text{mis,1}$ is a $\varepsilon^2$ term.
	
	For $R_\text{mis,2}$, due to the symmetric of $\vf$ and $\vv$, we obtain \begin{align}
	&R_\text{mis,2}=\lim_{R\to+\infty}\frac{\varepsilon^2}{2R}\sum_{\vx\in\mathbb{L}_{R}}\sum_{\vs}\Big[U'(\vs+\vp-\vd+\vv_{\vx+\vs}^{B+}-\vv_{\vx}^{A-})\notag\\&-U'(\vs+\vp-\vd+\vv_{\vx+2\vs}^{B+}-\vv_{\vx+\vs}^{A-})\Big](\vf_{2\vs+\vx}^{B+}-\vf_{\vx+\vs}^{A+})\notag\\&-\Big[U'(\vs+\vp-\vd+\vv_{\vx}^{A+}-\vv_{\vx}^{A-})-U'(\vs+\vp-\vd+\vv_{\vx}^{B+}-\vv_{\vx}^{B-})\Big]\notag\\&\times(\vf_{\vx}^{A+}-\vf_{\vx}^{B+})\notag\\=&\lim_{R\to+\infty}\frac{\varepsilon^2}{2R}\sum_{\vx\in\mathbb{L}_{R}}\sum_{\vs}U_{2,\vs}(-\varepsilon\nabla\vs\cdot\vv_{\vx+\vs}^{B+}-\varepsilon\vs\cdot\nabla\vv_{\vx}^{A+}+O(\varepsilon)(\varepsilon D_{\vs}^{\vp}\vf_{\vx+\vs}^{A+})\notag\\&-U_{2,\vs}(2\varepsilon\vp\cdot\nabla\vv_{\vx}^{A+}+O(\varepsilon))(\varepsilon D_{\vzero}^{\vp}\vf_{\vx+\vs}^{A+})\label{mis 2}
	\end{align}
	
	Therefore, each term in $R_\text{mis,2}$ is the $\varepsilon^4$ terms. Similarly with $\ref{o4too2}$, we know $R_\text{mis,1}$ is a $\varepsilon^2$ term.
	
\end{proof}

In the proof of the consistency, we assume the symmetric of $\vf$ and $\vv$. However, we can remove this assumption. For an example, in the $R_\text{mis,1}$, \begin{align}&\varepsilon^2\sum_{\vs}-U'(\vs+\vp-\vd+\vv_{\vx}^{A+}-\vv_{\vx}^{A-})(\vf_{\vx}^{A+}-\vf_{\vx}^{B+})\notag\\&-U'(\vs+\vp-\vd+\vv_{\vx}^{B+}-\vv_{\vx}^{B-})(\vf_{\vx}^{A-}-\vf_{\vx}^{B-})\notag\\=&\varepsilon^2\sum_{\vs}-\Big[U'(\vs+\vp-\vd+\vv_{\vx}^{A+}-\vv_{\vx}^{A-})-U'(\vs+\vp-\vd+\vv_{\vx}^{B+}-\vv_{\vx}^{B-})\Big]\notag\\&\times(\vf_{\vx}^{A+}-\vf_{\vx}^{B+})-U'(\vs+\vp-\vd+\vv_{\vx}^{B+}-\vv_{\vx}^{B-})(\vf_{\vx}^{A-}-\vf_{\vx}^{B-}+\vf_{\vx}^{A+}-\vf_{\vx}^{B+})\notag\end{align}

We already know the first term is the second order terms due to Eqs.~(\ref{mis 2}). The second term is a new term since we do not assume the symmetric of $\vf$. However, we denote $\vf_{\vx}^{A+}+\vf_{\vx}^{A-}=\varepsilon\bar{D}_0\vf_{\vx}^{A+}$. Hence, we obtain \begin{align}
&\varepsilon^2\sum_{\vs}-U'(\vs+\vp-\vd+\vv_{\vx}^{B+}-\vv_{\vx}^{B-})(\vf_{\vx}^{A-}-\vf_{\vx}^{B-}+\vf_{\vx}^{A+}-\vf_{\vx}^{B+})\notag\\=&\varepsilon^2\sum_{\vs}-U'(\vs+\vp-\vd+\vv_{\vx}^{B+}-\vv_{\vx}^{B-})(\varepsilon\bar{D}_0\vf_{\vx}^{A+}-\varepsilon\bar{D}_0\vf_{\vx}^{B+}).\notag
\end{align}

Similarlity, we can obtain $$\varepsilon^2\sum_{\vs}-U'(\vs+\vp-\vd+\vv_{\vx}^{A+}-\vv_{\vx}^{A-})(\varepsilon\bar{D}_0\vf_{\vx}^{B+}-\varepsilon\bar{D}_0\vf_{\vx}^{A+})$$ from $R_\text{mis,2}$. Then we combining them and find each term is still $\varepsilon^4$ order.

%\begin{myre}\label{comcom}
%	In \cite{weinan2007cauchy}, it finds that complex lattice has lower precision than simple lattice in Cauchy--Born rule. In \cite{luo2018atomistic}, it says that the consistency of PN model in 1D simple lattice is $\varepsilon^2$. In our prove, we have also show the precision is $\varepsilon^2$, which is same with simple lattice. The first reason is due to we have two symmetry layers, which is different with \cite{weinan2007cauchy}. The second reason is due to we use the property of atom $A$ and atom $B$ are same type of atoms.
%\end{myre}

\section{Stability of Atomistic Model and Proof of Theorem 2}\label{sec:stability}

In this section, we prove stability of the atomistic model for the complex lattice. Since we already have stability of the Peierls--Nabarro model, our method is to show that the gap between $\delta^2E_\text{PN}[v]$ and $\delta^2E_\text{a}[v]$ is small. Here we are not able to prove the stability directly as the consistency because we do not know the exact value of $\delta^2E_\text{PN}[v]$ in the discrete space. More precisely, in the consistency of the Peierls--Nabarro model, we know $\delta E_\text{PN}[v]=0$ because $v$ is the solution of the Peierls--Nabarro model; whereas for $\delta^2E_\text{PN}[v]$, we just know it is positive (stability of Peierls--Nabarro model Proposition \ref{stability of PN}) in the continuum space.

Therefore, we construct interpolation polynomials on the complex lattice in order to estimate the energies of the two models and to compare two models. However, there are four kinds of intralayer interactions (elastic energy) and four kinds of inter-layer interactions (misfit energy). It is difficult to describe the complex lattice by a single interpolation polynomial. Even if we establish a single interpolation polynomials for complex lattice, the order of the polynomial will be very high and the resulting energy will have unphysical oscillations. In order to solve this problem, we construct two interpolation polynomials to describe the energy on the complex lattice, and each polynomial  describes a part of the interactions in the atomistic model.

\subsection{Atomistic Dislocation Condition (ADC)}\label{sec:adc}
In this subsection, we introduce the Atomistic Dislocation Condition (ADC).

We focus on the upper layer. Consider an A atom  located on $\{x=0\}$ as shown in Figure \ref{value}. By assumption, the $x$-component of the displacement of this atom is $\frac{1}{4}$.
If on the crystallographic line where atoms  have the same $y$ coordinate, there is no atom at $x=0$, e.g., atoms B and C in Figure \ref{value},  we  assume that the average of the displacement about the two atoms is $\frac{1}{4}$. That is,
\begin{equation}\label{eqn:adc}
u_A=\frac{1}{4}, \ \ {\rm or}\ \  \frac{u_C+u_B}{2}=\frac{1}{4},
\end{equation}
for all atoms on or near $x=0$. More precisely, if there is an atom located at $x=0$ on a crystallographic line with the same $y$ coordinate, the first equation in \eqref{eqn:adc} holds; while if there no atom located at $x=0$ on a crystallographic line with the same $y$ coordinate,  the second equation in \eqref{eqn:adc} holds; see Figure \ref{value}.
 If the atomistic displacement field on the discrete space satisfies this assumption, we say that the atomistic model satisfies \textbf{Atomistic Dislocation Condition (ADC)}.
\begin{figure}[h]
	\centering
	\scalebox{0.30}{\includegraphics{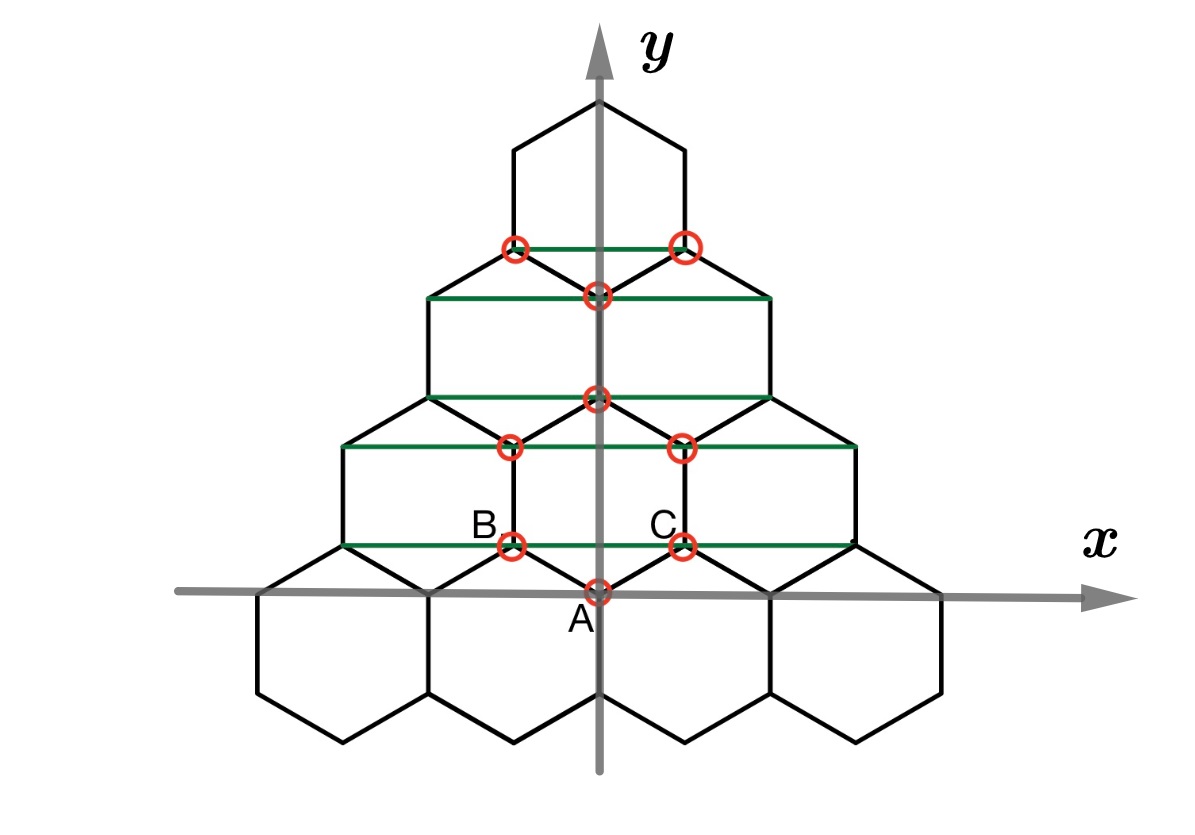}}
	\caption{Atomistic Dislocation Condition (ADC).}
	\label{value}
\end{figure}

%Now the definitions of $X_\varepsilon, S_\varepsilon$ read as:
%\begin{align}
%X_\varepsilon=\Bigg\{&f\in H^1_\varepsilon\mid\|f\|_{X_\varepsilon}<\infty, \lim_{\vs\cdot\ve_1\to\infty}f_{\vs}^{\kappa\pm}=0,~f_{\vs}^{\kappa\pm}=-f_{-\vs}^{\kappa\pm},\notag
%\\&\left\{f^+,f^-\right\}+\left\{\frac{1}{4},-\frac{1}{4}\right\}\text{ satisfies ADC. }\Bigg\}\\
%S_\varepsilon=\{&f\mid f-u_\varepsilon\in X_\varepsilon\},
%\end{align}
%where $u_0,~u_\varepsilon$ is defined in \eqref{s0}.

\subsection{Interpolation Polynomials of Complex Lattice}
In this subsection, we construct interpolation polynomials for estimating the energy  of the atomistic model with complex lattice, which enables us to prove the  stability of atomistic model  by comparing the atomistic model and the PN model directly. 

Now we construct the interpolation polynomials for a test function $\vf\in X_\varepsilon$. Two different interpolation polynomials are constructed on the complex lattice based on the two kind of atoms, A and B, and the four kinds of intralayer interactions, AAA, AAB and BBB, BBA. Here we focus on the intralayer interactions because the interlayer interactions can be handled in a relatively easier way.

\begin{figure}[h]
	\centering
\centering
	\subfigure[]{\includegraphics[width=0.65\textwidth]{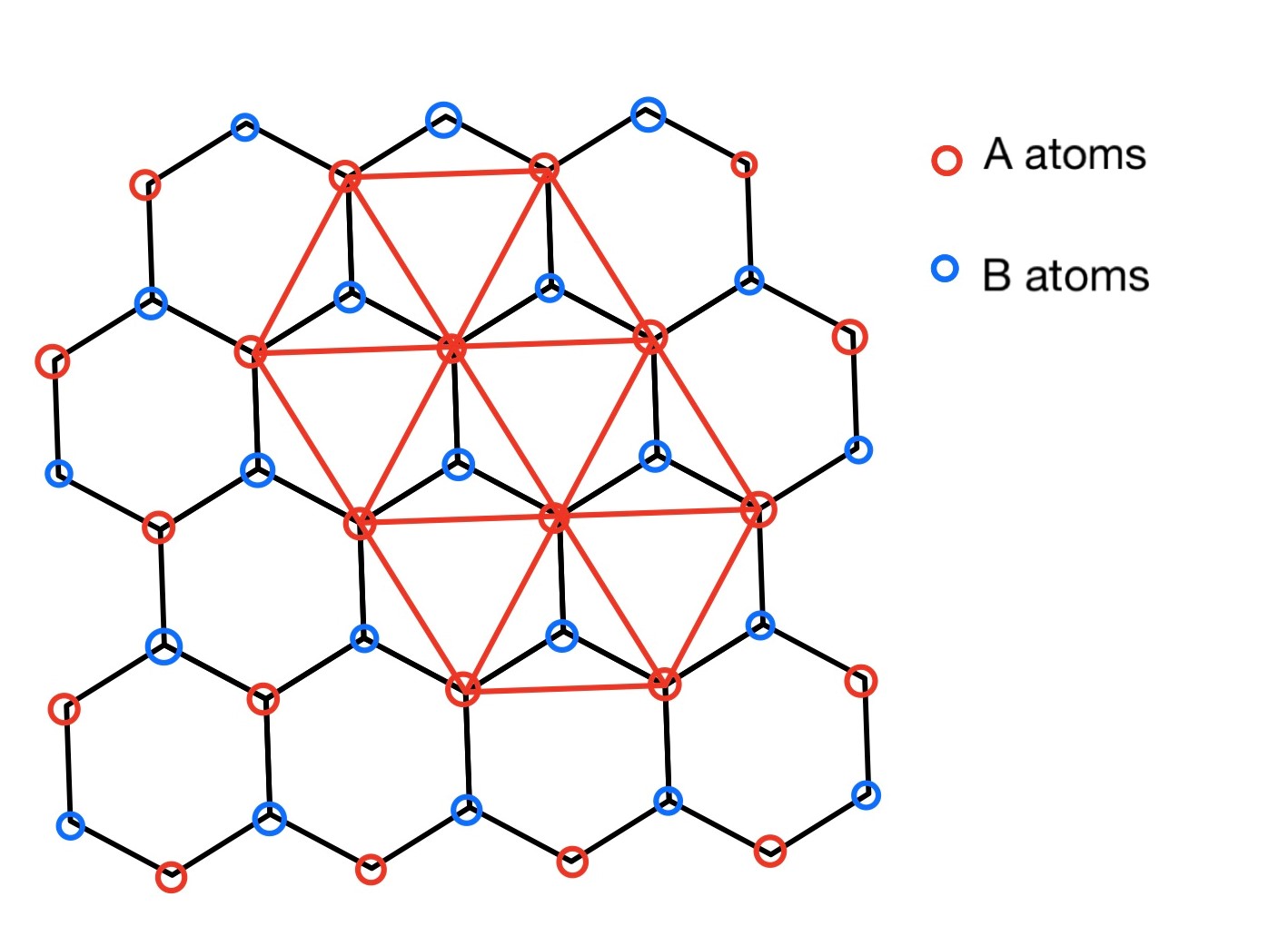}}
	\subfigure[]{\includegraphics[width=0.65\textwidth]{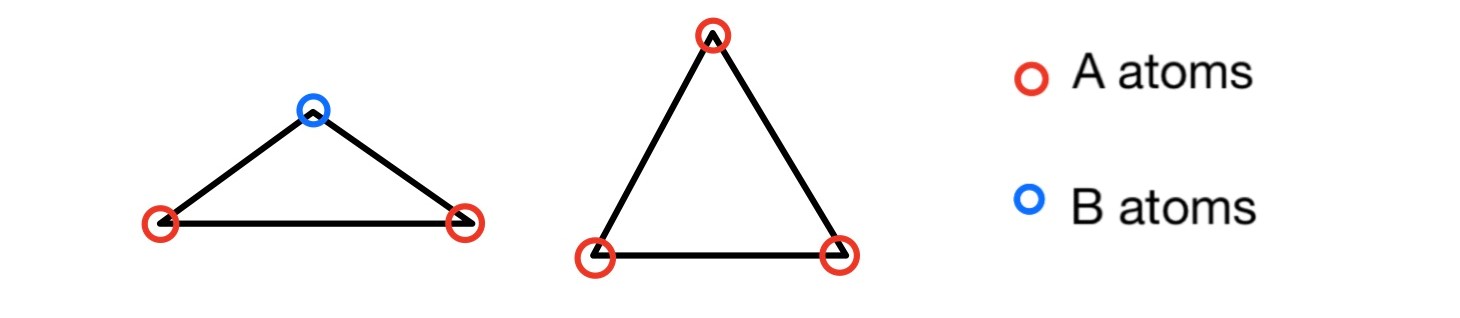}}%\label{two kind of triangles}
	\caption{(a) AAA and AAB interactions using a single interpolation polynomial. (b) Two kinds of triangles.}
	\label{Interpolation Polynomial for A}
\end{figure}

We first consider the AAA and AAB interactions. By connecting the neighboring A atoms,   $\mathbb{R}^2$ plane is divided into triangles; see Figure~\ref{Interpolation Polynomial for A}(a). As shown in Figure~\ref{Interpolation Polynomial for A}(b), 
 there are only two kind of triangles, whose three vertices are A atoms (AAA interactions) and  two A atoms and one B atom (AAB interactions), respectively.

For each triangle, we construct the linear Lagrange interpolation for the interaction energy.  Thus we get a piecewise linear interpolating function in $\mathbb{R}^2$. Below we give a brief introduction to linear Lagrange interpolation on a triangle.  More details can be found, e.g. in \cite{brenner2007mathematical}.

Suppose $T(x,y)=Ax+By+C$ is an interpolation polynomial in each equilateral triangles. We denote three vertices in the equilateral triangle as $(x_1,y_1)$, $(x_2,y_2)$ and $(x_3,y_3)$, and the value of $f$ in these three vertices as $T_1$, $T_2$ and $T_3$. By these notations, $$
\left[\begin{array}{l}{T_{1}} \\ {T_{2}} \\ {T_{3}}\end{array}\right]=\left[\begin{array}{lll}{1} & {x_{1}} & {y_{1}} \\ {1} & {x_{2}} & {y_{2}} \\ {1} & {x_{3}} & {y_{3}}\end{array}\right]\left[\begin{array}{l}{C} \\ {A} \\ {B}\end{array}\right].
$$
Solving this linear system, we obtain
\begin{equation}
\begin{array}{l}{A=\left[\left(y_{3}-y_{1}\right) \cdot\left(T_{2}-T_{1}\right)-\left(y_{2}-y_{1}\right) \cdot\left(T_{3}-T_{1}\right)\right] / \Delta}, \\ {B=\left[\left(x_{2}-x_{1}\right) \cdot\left(T_{3}-T_{1}\right)-\left(x_{3}-x_{1}\right) \cdot\left(T_{2}-T_{1}\right)\right] / \Delta}, \\ {\Delta=\left(x_{2}-x_{1}\right) \cdot\left(y_{3}-y_{1}\right)-\left(x_{3}-x_{1}\right) \cdot\left(y_{2}-y_{1}\right)}, \end{array}\label{triangles}
\end{equation}
Here $\Delta$ is twice the area of the triangle.

Now we prove that the piecewise linear function belongs to $X_0$.
\begin{mylem}\label{well define}
	Suppose $\vf=\left\{\vf^+,\vf^-\right\}\in X_\varepsilon$. We can find a piecewise linear function $\vf_A=\left\{\vf_A^+,\vf_A^-\right\}\in X_0$, such that $\vf=\vf_A$ on $\mathbb{L}_{\vp}$.
\end{mylem}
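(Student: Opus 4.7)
The plan is to define $\vf_A^\pm$ by piecewise affine Lagrange interpolation on the triangulation constructed in Figure~\ref{Interpolation Polynomial for A}, taking values $\vf_{\vx}^{A\pm}$ at A-vertices and $\vf_{\vx}^{B\pm}$ at B-vertices, and then to verify the three defining properties of $X_0$. On each triangle the three vertex values determine a unique affine function via \eqref{triangles}, and this is taken to be $\vf_A^\pm$ on that triangle. The identity $\vf_A=\vf$ on $\mathbb{L}_{\vp}$ is immediate from the construction. Continuity of $\vf_A^\pm$ on $\mathbb{R}^2$ follows from the standard observation that two neighboring triangles agree on their shared edge, because each side restricts on this edge to the unique affine function through the two common endpoint values.

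The decay and norm parts of $\vf_A\in X_0$ are routine. The limit $\lim_{x\to\pm\infty}\vf_A^\pm(x,y)=0$ follows from $\lim_{\vs\cdot\ve_1\to\pm\infty}\vf_{\vs}^{\kappa\pm}=0$ in the definition of $X_\varepsilon$, since $\vf_A^\pm$ at any point is a convex combination of at most three nearby vertex values. Finiteness of $\|\vf_A\|_{X_0}$ is the standard piecewise-affine finite element estimate: on each triangle of diameter $O(\varepsilon)$ the constant gradient $\nabla\vf_A^\pm$ has magnitude at most $C\varepsilon^{-1}$ times the maximum vertex-value difference, so integrating the square of the gradient over the triangle (area $O(\varepsilon^2)$) and summing reproduces the discrete sums of $|D_{\vs}\vf|^2$ entering $\|\vf\|_{X_\varepsilon}^2$; an analogous argument handles the $L^2$ piece. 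One obtains $\|\vf_A\|_{X_0}\le C\|\vf\|_{X_\varepsilon}<\infty$.

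The key step is the dislocation boundary condition $\vf_A^\pm(0,y)=0$ for every $y\in\mathbb{R}$, and this is where the Atomistic Dislocation Condition on $\vf+\{\tfrac14,-\tfrac14\}$ is used. ADC provides two facts: (i) every atom of $\mathbb{L}_{\vp}$ lying on $\{x=0\}$ carries $\vf_{\vs}^{\kappa\pm}=0$, and (ii) every pair of atoms on a common horizontal line that is symmetric across $\{x=0\}$ has mean zero. Using the mirror symmetry of the lattice (and hence of the triangulation) about the $y$-axis, each triangle $T$ with $T\cap\{x=0\}\neq\emptyset$ falls into one of two cases: (a) $T$ is itself mirror-symmetric about the $y$-axis, with one vertex on $\{x=0\}$ and the opposite edge an $AA$-segment whose midpoint lies on $\{x=0\}$; or (b) $T$ has one of its edges lying along $\{x=0\}$ between two vertex-atoms of $\mathbb{L}_{\vp}$ (necessarily one A and one B). In case (a), the vertex value is zero by (i) and, by linearity of $\vf_A^\pm$ along the opposite edge combined with (ii), the midpoint value is zero as well; in case (b), both endpoints are atoms on $\{x=0\}$ and so both carry zero by (i). Since the restriction of an affine function on $T$ to a line is affine in one parameter, vanishing at the two endpoints of $T\cap\{x=0\}$ forces $\vf_A^\pm\equiv 0$ on that segment, and patching over all triangles crossing $\{x=0\}$ yields $\vf_A^\pm\equiv 0$ on the full line.

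The main obstacle is precisely this boundary step: unlike decay and norm bounds, which are essentially interpolation-theoretic, it depends on a rigidity property of $\vf$ at lattice scale. Without the two-case formulation of ADC in \eqref{eqn:adc}, a generic piecewise-linear interpolation of an $X_\varepsilon$ field would take nonzero values on $\{x=0\}$ at points interior to triangles rather than at lattice sites, and $\vf_A$ would fail to lie in $X_0$. The two cases in ADC are calibrated precisely so that for each triangle crossing $\{x=0\}$ the two endpoints of the intersection segment both carry zero values, forcing the affine function on the triangle to vanish on this segment; this is the reason ADC is built into the definition of $X_\varepsilon$.
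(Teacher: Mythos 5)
Your proof is correct and follows essentially the same route as the paper: piecewise linear Lagrange interpolation on the A-triangulation, continuity from agreement of neighboring affine pieces on shared edges, and the Atomistic Dislocation Condition to force $\vf_A^\pm(0,y)=0$ on each triangle meeting the $y$-axis. You are in fact somewhat more complete than the paper's own argument, which treats only the mirror-symmetric triangle explicitly (your case (a)), omits the case of a triangle with an edge lying along $\{x=0\}$, and leaves the decay and norm-finiteness parts of membership in $X_0$ implicit (the latter being absorbed into Lemma~\ref{norm di} afterwards).
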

\begin{proof}
	Without loss of generality, we consider  $f^+\in X_\varepsilon$, where $\vf^+=(f^+,0)$. Since $\mathbb{R}^2$ is divided into triangles as shown in Figure~\ref{Interpolation Polynomial for A}(a) and each triangles has a interpolation polynomial, we define $f_A^+$ in $\mathbb{R}^2$ by combining all the interpolation polynomials, and  $f_A^+$ is a linear function on each triangle. Since the value of $f_A^+$ on boundary of triangles come from linear combination of the values at two vertices, we know that the two interpolation polynomials on the two neighboring triangles have the same value on the boundary where they meet. Thus $f_A^+$ is well-defined.
Moreover, since $f_A^+$ is continuous in each triangles, $f_A^+\in C(\mathbb{R}^2)$.

%	\begin{figure}[h]%current location
%		\centering
%		\scalebox{0.6}{\includegraphics{near.jpg}}
%		\caption{near the $\{x=0\}$}
%		\label{near} %framework,fig1
%	\end{figure}
	
	Finally, we prove that  $f_A^+$ satisfies the boundary conditions of $X_0$. Since $f_A^++\frac{1}{4}$ defined on $\mathbb{L}_{\vp}$ satisfies ADC in Eq.~\eqref{eqn:adc}, we know that for a triangle that containing the $y$-axis, e.g., the triangle ABC in Figure~\ref{value}, we have 
$$f_A^+|_A=0, \ \ \frac{f_A^+|_B+f_A^+|_C}{2}=0.$$
 By direct calculation like \eqref{triangles}, we have $f_A^+(0,y)=0$ inside this triangle. Therefore,$$f_A^+(0,y)=0\text{ for all } y\in \mathbb{R}.$$
\end{proof}

Similarly, another linear interpolating function can be constructed based on the  ABB and BBB interactions. In this case, the $\mathbb{R}^2$ plane is into triangles by connecting neighboring B atoms.

%Furthermore, we notice that there are only two kind of triangles. Among three vertices are B (BBB interactions) or two vertices are B and another is A (BBA interactions).
%Similarly, we can define $f_B=\left\{f^+_B,f^-_B\right\}$ in this condition by linear Lagrange.

\subsection{Stability of Atomistic Model}
In this part, we prove the stability of atomistic model, which means we are going to prove
\begin{equation}
\left\langle\delta^{2} E_\text{a}[\vv] \vf, \vf\right\rangle_{\varepsilon}\ge C\|\vf\|^2_{X_\varepsilon},\label{goal}
\end{equation}
for $C>0$ and $\vf\in X_\varepsilon$. In addition, we get that \eqref{goal} is still vaild when $\vu$ is close to $\vv$, where $\vv$ is the solution of PN model. The method that we use is energy estimate by two interpolation polynomials defined in Section 9.2.

First we prove the stability gap between $E_\text{a}^A[\vv]$ and $E_{\mathrm{PN}}^A[\vv]$ based on the interpolation polynomials.

\begin{mylem}\label{change}
	Suppose that A1--A6 hold, and $\vv$ is the dislocation solution of the PN model in Theorem \ref{Theorem 1}. Then there exists  constants $\varepsilon_0$ and $C$, such that for $0<\varepsilon<\varepsilon_0$ and $\vf\in X_{\varepsilon}$, we have
	\begin{align}
	&\left\langle\delta^{2} E_\text{a}^A[\vv] \vf, \vf\right\rangle_{\varepsilon}-\left\langle\delta^{2} E_{\mathrm{PN}}^A[\vv] {\vf}_A, {\vf}_A\right\rangle_{0}\notag\\=&\left\langle\delta^{2} E_\text{a}^A[\vzero] \vf, \vf\right\rangle_{\varepsilon}-\left\langle\delta^{2} E_{\mathrm{PN}}^A[\vzero] {\vf}_A, {\vf}_A\right\rangle_{0}+O(\varepsilon)\|\vf\|_{X_{\varepsilon}}^{2}.\label{lem 10}
	\end{align}
\end{mylem}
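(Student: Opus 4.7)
The strategy is to isolate the $\vv$-dependence on each side of \eqref{lem 10} and show that those contributions match up to $O(\varepsilon)\|\vf\|_{X_\varepsilon}^2$. Define
$$A_1 := \langle \delta^2 E_a^A[\vv]\vf, \vf\rangle_\varepsilon - \langle \delta^2 E_a^A[\vzero]\vf, \vf\rangle_\varepsilon,\quad P_1 := \langle \delta^2 E_{\text{PN}}^A[\vv]\vf_A, \vf_A\rangle_0 - \langle \delta^2 E_{\text{PN}}^A[\vzero]\vf_A, \vf_A\rangle_0,$$
so \eqref{lem 10} is equivalent to $|A_1 - P_1| \le C\varepsilon \|\vf\|_{X_\varepsilon}^2$. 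Both $A_1$ and $P_1$ measure the linear response of the respective second variations to switching on the displacement $\vv$, and the continuum PN energy is designed so that its second variation agrees with that of the atomistic model to leading order once the piecewise-linear interpolation $\vf \mapsto \vf_A$ of Section~9.2 is used.

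First, I would expand $A_1$ directly from \eqref{A A part}. Only the coefficients depend on $\vv$: the intralayer piece features coefficients of the form $\partial_{ij}V(\vs_1 + \varepsilon D_{\vs_1}^{\vp}\vv_\vx^{A\pm}, \vs_2 + \varepsilon D_{\vs_2}\vv_\vx^{A\pm})$ multiplying quadratic forms in $D\vf$, while the misfit piece features $U''(\vs - \vd + \varepsilon D\vv)$ multiplying $|\vf|^2$. Writing each coefficient as its value at $\vv = \vzero$ plus a fundamental-theorem-of-calculus remainder, and then expanding $\varepsilon D_{\vs}\vv \approx \varepsilon\vs\cdot\nabla\vv(\vx)$, the first-order-in-$\varepsilon$ part of $A_1$ becomes a discrete Riemann sum against $\partial^3 V(\vs_1,\vs_2)$ (respectively $U^{(3)}(\vs-\vd)$) weighted by $\nabla\vv$. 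The higher-order remainders are controlled by $C\varepsilon^2 \|\vf\|_{X_\varepsilon}^2$ via Lemmas~\ref{A4 come} and~\ref{s come}, together with the $W^{5,\infty}\cap W^{4,1}$ regularity of $\vv$ from Theorem~\ref{Theorem 1} and Cauchy--Schwarz.

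For $P_1$, the elastic part of $\delta^2 E_{\text{PN}}^A[\vu]$ in \eqref{PN a} is a fixed quadratic form in $\nabla\vu^\pm$ with $\vu$-independent weights and hence drops out, leaving only the misfit contribution $P_1 = \int \bigl[\gamma_A''(\vv^\perp) - \gamma_A''(\vzero)\bigr]|\vf_A^\perp|^2 \,\D x\,\D y$, whose leading Taylor term equals $\int \gamma_A^{(3)}(\vzero)\,\vv^\perp\cdot|\vf_A^\perp|^2\,\D x\,\D y$. I would match the leading misfit part of $A_1$ to this continuum integral by converting the discrete sum over $\mathbb{L}$ into an integral against $\vf_A$ using Lemma~\ref{well define} (which also provides the interpolation bound $\|\vf_A\|_{X_0}\le C\|\vf\|_{X_\varepsilon}$), and by bounding the resulting Riemann-to-integral error by $C\varepsilon$ using the smoothness of $\vv$ and the decay of $U^{(k)}$ from A4. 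The leading intralayer elastic contribution to $A_1$ has no PN counterpart and must be shown directly to be $O(\varepsilon)\|\vf\|_{X_\varepsilon}^2$; this follows from the $3$-fold rotational symmetry of the $A$-sublattice, exactly as in Lemma~\ref{no xy}: after summing over the three rotated copies of each $\langle\vs_1,\vs_2\rangle$, the first-order coefficient $\varepsilon\,\partial^3 V(\vs_1,\vs_2)\cdot(\vs_1\cdot\nabla\vv, \vs_2\cdot\nabla\vv)$ either averages to zero or leaves only higher-order contributions.

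The main obstacle is this last symmetry cancellation combined with the interpolation step: one must track which rotated copies of each triangle contribute to the $AAA$ versus $AAB$ groupings, verify that the anisotropic $\nabla\vv$-weighted first-order corrections cancel after summation, and simultaneously bound the interpolation error incurred by replacing $\vf$ by $\vf_A$. This parallels the vanishing of the $\partial_x u\,\partial_y u$ cross term in Lemma~\ref{no xy}, but here applied to third derivatives of $V$ and against a lower-regularity test function $\vf$. Care is also needed with the $y$-averaging in $\langle\cdot,\cdot\rangle_\varepsilon$ and $\langle\cdot,\cdot\rangle_0$ and with the ADC boundary condition in $X_\varepsilon$; both are handled uniformly thanks to the $y$-independence of $\vv$ (Theorem~\ref{Theorem 1}). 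Once these cancellations and the interpolation bound are in place, summing the absolutely convergent remainders using A4 and Lemma~\ref{A4 come} yields $|A_1 - P_1| \le C\varepsilon\|\vf\|_{X_\varepsilon}^2$, which is \eqref{lem 10}.
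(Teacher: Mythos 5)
Your overall architecture matches the paper's proof: you isolate the $\vv$-dependence on both sides, observe that the elastic part of $\delta^{2}E_{\mathrm{PN}}^{A}$ is $\vv$-independent and drops out of $P_1$, Taylor-expand the coefficients $\partial_{ij}V$ and $U''$ around $\vv=\vzero$, and compare the remaining misfit contributions to the continuum integral by a Riemann-sum and interpolation argument; the paper organizes exactly these steps as five remainders $R_1,\dots,R_5$. The one place your reasoning goes astray is the intralayer elastic term: you claim its $O(\varepsilon)\|\vf\|_{X_\varepsilon}^2$ bound ``follows from the $3$-fold rotational symmetry,'' but that cancellation is both unnecessary and doubtful here. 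It is unnecessary because the term already carries an explicit factor of $\varepsilon$ from the coefficient expansion, so summing the third-derivative bounds $V_{(3,0),\vs_1,\vs_2}+V_{(2,1),\vs_1,\vs_2}$ against the quadratic form in $D\vf$ via Lemma~\ref{A4 come} and Cauchy--Schwarz immediately yields $C\varepsilon\|\vf\|_{X_\varepsilon}^{2}$, which is precisely how the paper treats $R_1$; and it is doubtful because in the second variation the test function enters through $(D_{\vs_1}\vf_{\vx}^{A\pm})^{i}(D_{\vs_2}\vf_{\vx}^{A\pm})^{j}$, which changes when $\langle\vs_1,\vs_2\rangle$ is rotated, unlike the linear factor $\vf_{\vx}^{A\pm}$ that makes the rotational cancellation work in Lemma~\ref{no xy} and in the consistency proof. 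Since only an $O(\varepsilon)$ bound is required in \eqref{lem 10}, this does not create a gap in the conclusion, but you should replace the symmetry argument by the direct estimate.
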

\begin{proof}
	By direct calculation, we have
	\begin{flalign}
	&\left\langle\delta^{2} E_\text{a}^A[\vv] \vf, \vf\right\rangle_{\varepsilon}\notag\\=&\lim_{R\to+\infty}\frac{1}{2R}\sum_{\vx\in\mathbb{L}_{R}}\Big\{\frac{\varepsilon^2}{6}\sum_{\langle\vs_1,\vs_2\rangle}\sum_{i+j=2}\tbinom{2}{i}\partial_{ij}V(\vs_1+\varepsilon D_{\vs_1}\vv_{\vx}^{A\pm},\vs_2+\varepsilon D_{\vs_2}\vv_{\vx}^{A\pm})\notag\\&(D_{\vs_1}\vf_{\vx}^{A\pm})^i(D_{\vs_2}\vf_{\vx}^{A\pm})^j\notag\\&+\frac{\varepsilon^2}{2}\sum_{\langle\vs_1,\vs_2\rangle}\sum_{i+j=2}\tbinom{2}{i}\partial_{ij}V(\vp+\vs_1+\varepsilon D^{\vp}_{\vs_1}v_{\vx}^{A\pm},\vs_2+\varepsilon D_{\vs_2}v_{\vx}^{A\pm})\notag\\&(D^{\vp}_{\vs_1}\vf_{\vx}^{A\pm})^i(D_{\vs_2}\vf_{\vx}^{A\pm})^j\notag\\&+\varepsilon^2\sum_{\vs}\big[U''({\vv^{A+}_{\vx+\vs}-\vv^{A-}_{\vx}}+\vs-\vd)(\vf^{A+}_{\vx+\vs}-\vf^{A-}_{\vx})^2\notag\\&+U''({\vv^{A+}_{\vx+\vs}-\vv^{B-}_{\vx}}+\vs-{\vp}-\vd)(\vf^{A+}_{\vx+\vs}-\vf^{B-}_{\vx})^2\big]\Big\}\label{A have part},\\&\left\langle\delta^{2} E_\text{a}^A[\vzero] \vf, \vf\right\rangle_{\varepsilon}\notag\\=&\lim_{R\to+\infty}\frac{1}{2R}\sum_{\vx\in\mathbb{L}_{R}}\Big\{\frac{\varepsilon^2}{6}\sum_{\langle\vs_1,\vs_2\rangle}\sum_{i+j=2}\tbinom{2}{i}\partial_{ij}V(\vs_1,\vs_2)(D_{\vs_1}\vf_{\vx}^{A\pm})^i(D_{\vs_2}\vf_{\vx}^{A\pm})^j\notag\\&+\frac{\varepsilon^2}{2}\sum_{\langle\vs_1,\vs_2\rangle}\sum_{i+j=2}\tbinom{2}{i}\partial_{ij}V(\vp+\vs_1,\vs_2)(D^{\vp}_{\vs_1}\vf_{\vx}^{A\pm})^i(D_{\vs_2}\vf_{\vx}^{A\pm})^j\notag\\&+\varepsilon^2\sum_{\vs}\big[U''(\vs-\vd)(\vf^{A+}_{\vx+\vs}-\vf^{A-}_{\vx})^2+U''(\vs-{\vp}-\vd)(\vf^{A+}_{\vx+\vs}-\vf^{B-}_{\vx})^2\big]\Big\}\label{A no part}\\&\left\langle\delta^{2} E_{\mathrm{PN}}^A[\vv] {\vf}_A, {\vf}_A\right\rangle_{0}\notag\\=&\lim_{R\to+\infty}\frac{1}{R}\int_{-R}^{R}\int_{\mathbb{R}}\left[\alpha_1(\partial_y\vf^+_A)^2+\alpha_2(\partial_y\vf^+_A)^2+2\nabla^2\gamma_A(\vphi)(\vf_A^\perp)^2\right]\,\D x\,\D y,\label{ P have part}\\
	&\left\langle\delta^{2} E_{\mathrm{PN}}^A[\vzero] {\vf}_A, {\vf}_A\right\rangle_{0}\notag\\=&\lim_{R\to+\infty}\frac{1}{R}\int_{-R}^{R}\int_{\mathbb{R}}\left[\alpha_1(\partial_y\vf^+_A)^2+\alpha_2(\partial_y\vf^+_A)^2+2\nabla^2\gamma_A(\vzero)(\vf_A^\perp)^2\right]\,\D x\,\D y.\label{ P no part}
	\end{flalign}
	We divide \eqref{lem 10} into following five parts, $R_i$, $i=1,2,\cdots,5$. The remaining problem is to show that these five parts are $O(\varepsilon)$.
	
	1. \begin{align}
	R_1=&\varepsilon^2\lim_{R\to+\infty}\frac{1}{2R}\sum_{\vx\in\mathbb{L}_{R}}\Big\{\frac{1}{6}\sum_{\langle\vs_1,\vs_2\rangle}\sum_{i+j=2}\tbinom{2}{i}\notag\\&\partial_{ij}V(\vs_1+\varepsilon D_{\vs_1}\vv_{\vx}^{A\pm},\vs_2+\varepsilon D_{\vs_2}\vv_{\vx}^{A\pm})(D_{\vs_1}\vf_{\vx}^{A\pm})^i(D_{\vs_2}\vf_{\vx}^{A\pm})^j\notag\\+&\frac{1}{2}\sum_{\langle\vs_1,\vs_2\rangle}\sum_{i+j=2}\tbinom{2}{i}\partial_{ij}V(\vp+\vs_1+\varepsilon D^{\vp}_{\vs_1}\vv_{\vx}^{A\pm},s_2+\varepsilon D_{\vs_2}\vv_{\vx}^{A\pm})\notag\\&(D^{\vp}_{\vs_1}\vf_{\vx}^{A\pm})^i(D_{\vs_2}\vf_{\vx}^{A\pm})^j\notag\\-&\frac{1}{6}\sum_{\langle\vs_1,\vs_2\rangle}\sum_{i+j=2}\tbinom{2}{i}\partial_{ij}V(\vs_1,\vs_2)(D_{\vs_1}\vf_{\vx}^{A\pm})^i(D_{\vs_2}\vf_{\vx}^{A\pm})^j\notag\\-&\frac{1}{2}\sum_{\langle\vs_1,\vs_2\rangle}\sum_{i+j=2}\tbinom{2}{i}\partial_{ij}V(\vp+\vs_1,\vs_2)(D^{\vp}_{\vs_1}\vf_{\vx}^{A\pm})^i(D_{\vs_2}\vf_{\vx}^{A\pm})^j\Big\}.\label{R1}
	\end{align}
	Without loss of generality, we just calculate the complex lattice parts with $i=2,j=0$:
	\begin{align}
	&\lim_{R\to+\infty}\frac{1}{2R}\sum_{x\in\mathbb{L}_{R}} \sum_{\langle\vs_1,\vs_2\rangle}[\varepsilon^2\partial_{20}V(\vp+\vs_1+\varepsilon D^{\vp}_{\vs_1}\vv_{\vx}^{A\pm},\vs_2+\varepsilon D_{\vs_2}\vv_{\vx}^{A\pm})\notag\\&-\partial_{20}V(\vp+\vs_1,\vs_2)](D^{\vp}_{\vs_1}\vf_{\vx}^{A\pm})^2\notag\\\le&\varepsilon^3\lim_{R\to+\infty}\frac{1}{2R}\sum_{\vx\in\mathbb{L}_{R}} \sum_{\langle\vs_1,\vs_2\rangle}(V_{(3,0),\vp+\vs_1,\vs_2}+V_{(2,1),\vp+\vs_1,\vs_2})\notag\\&(\vv^{\vp}_{1,\vx,\vs_1}+\vv_{1,\vx,\vs_1})(D^{\vp}_{\vs_1}\vf_{\vx}^{A\pm})^2\notag\\\le&C\varepsilon \|\vf\|_\varepsilon^2,
	\end{align}
	where the reason is the same as \eqref{o4too2}.
	
	2.\begin{align}
	R_2=&\lim_{R\to+\infty}\frac{\varepsilon^2}{2R}\sum_{\vx\in\mathbb{L}_{R}} \sum_{\vs}\big[U''({\vv^{A+}_{\vx+\vs}-\vv^{A-}_{\vx}}+\vs-\vd)(\vf^{A+}_{\vx+\vs}-\vf^{A-}_{\vx})^2\notag\\&+U''({\vv^{B+}_{\vs+\vx}-\vv^{A-}_{\vx}}+\vs-{\vp}-\vd)(\vf^{B+}_{\vx+\vs}-\vf^{A-}_{\vx})^2\notag\\&-U''({\vv^{A+}_{\vx}-\vv^{A-}_{\vx}}+\vs-\vd)(\vf^{A+}_{\vx+\vs}-\vf^{A-}_{\vx})^2\notag\\&-U''({\vv^{A+}_{\vx}-\vv^{A-}_{\vx}}+\vs-{\vp}-\vd)(\vf^{B+}_{\vx+\vs}-\vf^{A-}_{\vx})^2\big].
	\end{align}
	Without loss of generality, we just calculate the complex lattice parts:
	\begin{align}
	&\lim_{R\to+\infty}\frac{\varepsilon^2}{2R}\sum_{x\in\mathbb{L}_{R}} \sum_{\vs}\big[U''({\vv^{B+}_{\vs+\vx}-\vv^{A-}_{\vx}}+\vs+{\vp})\notag\\&-U''({\vv^{A+}_{\vx}-\vv^{A-}_{\vx}}+\vs+{\vp})\big](\vf^{B+}_{\vx+\vs}-\vf^{A-}_{\vx})^2\notag\\\le&\varepsilon\lim_{R\to+\infty}\frac{1}{2R}\sum_{\vx\in\mathbb{L}_{R}} \varepsilon^2\sum_{\vs}U_{3,\vs+\vp}(\varepsilon D_{\vs}\vf_{\vx}^{B+}+\vf_{\vx}^{B+}+\vf_{\vx}^{A+})^2\notag\\\le&C\varepsilon \|\vf\|_\varepsilon^2,
	\end{align}
	where the reason is the same as \eqref{o4too2}.
	
	3.\begin{align}
	R_3=&\lim_{R\to+\infty}\frac{\varepsilon^2}{2R}\sum_{\vx\in\mathbb{L}_{R}} \sum_{\vs}\Big\{\big[U''({\vv^{A+}_{\vx}-\vv^{A-}_{\vx}}+\vs-\vd) -U''(\vs-\vd)\big]\notag\\&\big[(\vf^{A+}_{\vx+\vs}-\vf^{A-}_{\vx})^2-(\vf^{A+}_{\vx}-\vf^{A-}_{\vx})^2\big]\notag\\&+\big[U''({\vv^{A+}_{\vx}-\vv^{A-}_{\vx}}+\vs+\vp-\vd) -U''(\vs+\vp-\vd)\big]\notag\\&\big[(\vf^{B+}_{\vx+\vs}-\vf^{A-}_{\vx})^2-(\vf^{A+}_{\vx}-\vf^{A-}_{\vx})^2\big]\Big\}.
	\end{align}
	By the same way in $R_2$, we have $$R_3\le C\varepsilon \|\vf\|_\varepsilon^2.$$

\begin{figure}[htbp]%current location
		\centering
		\scalebox{0.35}{\includegraphics{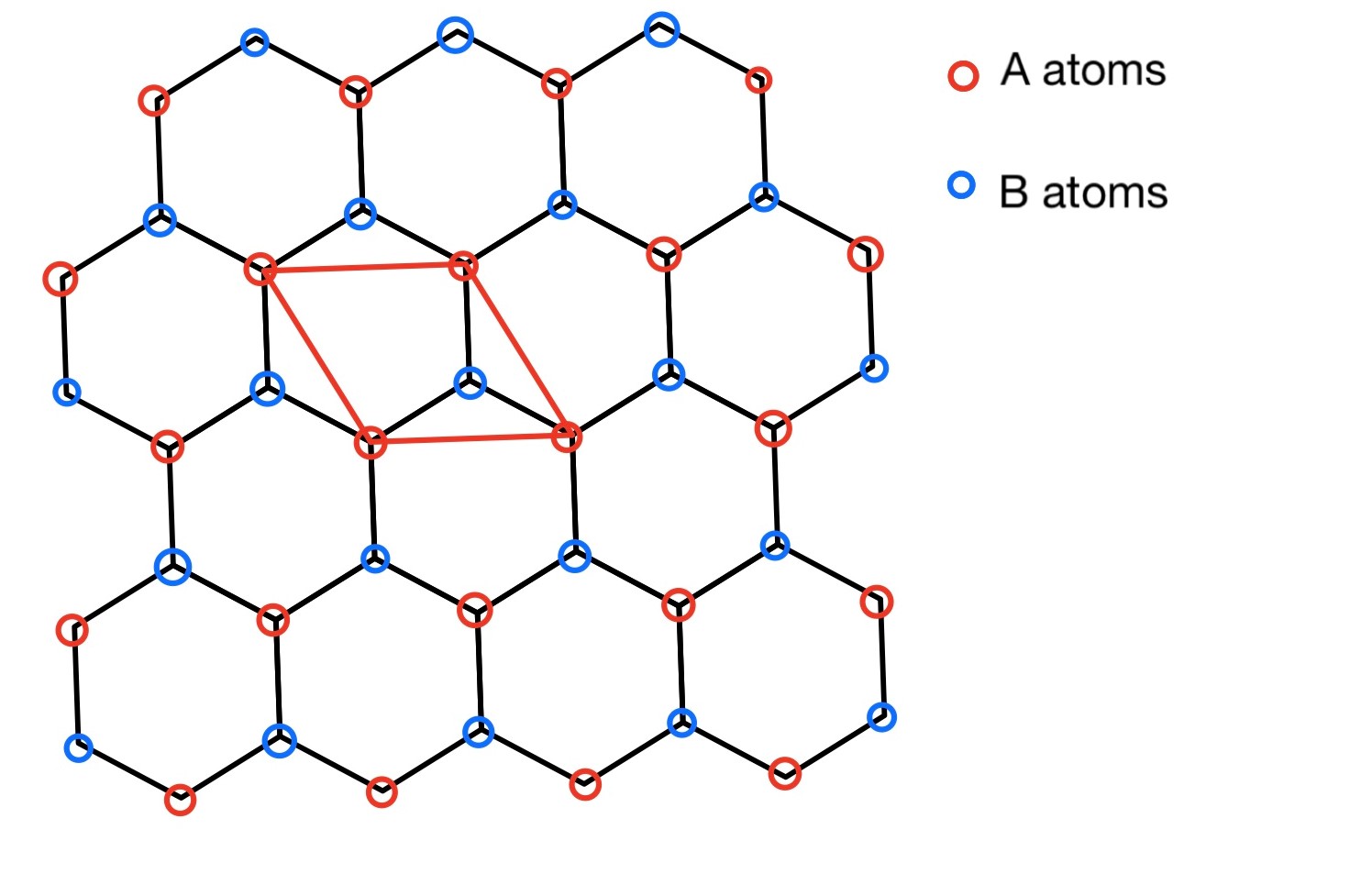}}
		\caption{Tessellation of $\mathbb{R}^2$.}
		\label{divide 2} %framework,fig1
	\end{figure}
	
	4.	The next two terms, $R_4$ and $R_5$, will be compared with the continuum model.
% We divide $\mathbb{R}^2$ into infinite parts.
		We divide the $\mathbb{R}^2$ by parallelograms as shown in Figure~\ref{divide 2}. For each parallelogram, there is a B atom. When the label of B atom is $\vs$, we denote this  parallelogram as $P_{\vs}$.
	\begin{align}
	R_4=&\sum_{\vs}\sum_{\vx}\lim_{R\to+\infty}\frac{1}{2R}\int_{P_{\vx}\cap (-R,R)\times \mathbb{R}}\sum_{\vs}\Big\{\big[U''(\vs+\vv_{\vx}^{A+}-\vv_{\vx}^{A-}-\vd)\notag\\&-U''(\vs+\vv^+-\vv^--\vd)\big](\vf^{A+}_{\vx}-\vf^{A-}_{\vx})^2+\big[U''(\vs+\vp+\vv_{\vx}^{A+}-\vv_{\vx}^{A-}-\vd)\notag\\&-U''(\vs+\vp+\vv^+-\vv^--\vd)\big](\vf^{A+}_{\vx}-\vf^{A-}_{\vx})^2\Big\}\,\D x\,\D y.
	\end{align}
	Since $$\Big|U''(\vs+\vp+\vv_{\vx}^{A+}-\vv_{\vx}^{A-}-\vd)-U''(\vs+\vp+\vv^+-\vv^--\vd)\Big|\le \varepsilon U_{3,\vs+\vp}\|\nabla \vv\|_\infty,$$ we can get
	\begin{equation}
	R_4\le C\varepsilon \|\vf\|_\varepsilon^2.\label{R4}
	\end{equation}

	5.\begin{align}
	R_5=&\sum_{\vs}\sum_{\vx}\lim_{R\to+\infty}\frac{1}{2R}\int_{P_{\vx}\cap (-R,R)\times \mathbb{R}}\sum_{\vs}\Big\{\big[U''(\vs+\vv^+-\vv^-)\notag\\&-U''(\vs)\big][(\vf^{A+}_{\vx}-\vf^{A-}_{\vx})^2-(\vf_A^+-\vf_A^-)^2]\notag\\& + \big[U''(\vs+\vp+\vv^+-\vv^-)\notag\\&-U''(\vs+\vp)\big][(\vf^{A+}_{\vx}-\vf^{A-}_{\vx})^2-(\vf_A^+-\vf_A^-)^2]\Big\}\,\D x\,\D y .
	\end{align}
	Since we have $\vf=\{(f^+,0),(f^-,0)\}$, \begin{align}
	&(f^{A+}_{\vx}-f^{A-}_{\vx})^2-(f_A^+-f_A^-)^2\notag\\=&(  f^{A+}_{\vx}+f_A^+-f^{A-}_{\vx}-f_A^-)(f^{A+}_{\vx}-f_A^++f^{A-}_{\vx}+f_A^-)\notag
	\end{align}
	In addition, \begin{align}&\int_{P_{\vx}}2f_A^+\,\D x\,\D y\\=&\frac{\sqrt{3}}{2}\varepsilon^2\left[\frac{1}{3}(f^{A+}_{\vx+\ve_2-\ve_1}+f^{B+}_{\vx})+\frac{5}{9}(f^{A+}_{\vx}+f^{A+}_{\vx+\ve_2})+\frac{2}{9}f^{A+}_{\vx+\ve_1}\right].\end{align}
	Therefore we can have
	\begin{align}
	&\int_{P_{x}}\big[(f^{A+}_{\vx}-f^{A-}_{\vx})^2-(f_A^+-f_A^-)^2\big]\,\D x\,\D y\notag\\\le&\frac{\sqrt{3}}{2}\varepsilon^32(f_{\vx}^{A+}+f^{A+}_{\vx+\ve_2-\ve_1}+f^{B+}_{\vx}+f^{A+}_{\vx+\ve_2}+f^{A+}_{\vx+\ve_1})\notag\\&\times(D_{\ve_1+\ve_2}f^{A+}_{\vx}+D_{\ve_2}f^{A+}_{\vx}+D_{\ve_1}f^{A+}_{\vx}+D^{\vp}_{\vzero}f^{A+}_{\vx}).
	\end{align}
	Thus \begin{align}R_5\le&  C\varepsilon^3 \sum_{\vs}\sum_{\vx}U_{3,\vs+\vp}2(\vf_{\vx}^{A+}+\vf^{A+}_{\vx+\ve_2-\ve_1}+\vf^{B+}_{\vx}+\vf^{A+}_{\vx+\ve_2}+\vf^{A+}_{\vx+\ve_1})\notag\\&\times(D_{\ve_1+\ve_2}\vf^{A+}_{\vx}+D_{\ve_2}\vf^{A+}_{\vx}+D_{\ve_1}\vf^{A+}_{\vx}+D^{\vp}_{\vzero}\vf^{A+}_{\vx})\notag\\ \le& C\varepsilon\|\vf\|_\varepsilon^2.\label{R5}
	\end{align}
\end{proof}

Next we define the stability gap to prove the stability of atomistic model.

\begin{mylem}\label{Delta}
	Suppose that A1--A6 hold, and $\vv$ is the dislocation solution of the PN model in Theorem \ref{Theorem 1}. There exist constants $\varepsilon_0$ and $C$, such that for $0<\varepsilon<\varepsilon_0$ and $\vf\in X_{\varepsilon}$, we have
	$$\left\langle\delta^{2} E^A_\text{a}[\vzero] \vf, \vf\right\rangle_{\varepsilon}-\left\langle\delta^{2} E^A_\text{PN}[\vzero] {\vf_A}, {\vf_A}\right\rangle_{0} \geq-\Delta_A\|\vf\|_{X_{\varepsilon}}^{2}+O(\varepsilon)\|\vf\|_{X_{\varepsilon}}^{2},$$
	where $\Delta_A$ we show in the proof.
\end{mylem}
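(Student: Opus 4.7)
The plan is to expand both quadratic forms explicitly on the triangular and parallelogram tessellations used in Lemma~\ref{change} and to recognize the resulting gap as the quantity $\Delta_A$ from Assumption~A8 modulo a correction of order $\varepsilon$. By Eq.~\eqref{A no part}, the atomistic second variation $\langle\delta^{2} E_\text{a}^A[\vzero]\vf,\vf\rangle_\varepsilon$ is a translation-invariant sum of products of difference quotients $D_{\vs_1}\vf,D_{\vs_2}\vf,D^{\vp}_{\vs_1}\vf$ weighted by the pure-lattice Hessians $\partial_{ij}V(\vs_1,\vs_2)$ and $\partial_{ij}V(\vp+\vs_1,\vs_2)$, together with misfit terms weighted by $U''(\vs-\vd)$ and $U''(\vs-\vp-\vd)$. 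By Eq.~\eqref{ P no part}, the corresponding continuum form on the interpolant $\vf_A$ is the integral of $\alpha_1(\partial_x\vf_A^+)^2+\alpha_2(\partial_y\vf_A^+)^2+2\nabla^2\gamma_A(\vzero)(\vf_A^\perp)^2$, with coefficients independent of $\vx$.

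First I would rewrite the continuum form as a discrete sum over lattice cells. By Lemma~\ref{well define}, $\nabla\vf_A^\pm$ is constant on each triangle of the $A$-tessellation (Figure~\ref{Interpolation Polynomial for A}(a)), so the elastic integral over each triangle reduces to an explicit quadratic function of the three vertex values of $\vf$, hence to $\varepsilon^2\sum_{\vx\in\mathbb{L}}Q^{\rm el}_\vx(\vf)$ with $Q^{\rm el}_\vx$ expressible in terms of $\{D_{\vs}\vf^{A\pm}_\vx, D^{\vp}_{\vs}\vf^{A\pm}_\vx\}$. The misfit integral is handled analogously on the parallelogram cells $P_\vx$ of Figure~\ref{divide 2}. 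Subtracting the two cell-by-cell yields an explicit translation-invariant quadratic form $\mathcal{Q}_\varepsilon(\vf)$ for the gap; this formula is the explicit representation of $\delta^{2} E_\text{a}^A[\vzero]$ promised in the statement of the lemma.

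Next, by the definition of $\Delta_A$ in Assumption~A8,
\begin{equation*}
\Delta_A=\lim_{\varepsilon\to 0}\sup_{\|\vf\|_{X_\varepsilon}=1}\Big(\langle\delta^{2} E^A_\text{PN}[\vzero]\vf_A,\vf_A\rangle_{0}-\langle\delta^{2} E^A_\text{a}[\vzero]\vf,\vf\rangle_{\varepsilon}\Big),
\end{equation*}
so by homogeneity, for every $\varepsilon<\varepsilon_0$ and every nonzero $\vf\in X_\varepsilon$,
\begin{equation*}
\langle\delta^{2} E^A_\text{PN}[\vzero]\vf_A,\vf_A\rangle_{0}-\langle\delta^{2} E^A_\text{a}[\vzero]\vf,\vf\rangle_{\varepsilon}\le(\Delta_A+\eta(\varepsilon))\|\vf\|_{X_\varepsilon}^2
\end{equation*}
with $\eta(\varepsilon)\to 0$. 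To upgrade $\eta(\varepsilon)$ to $O(\varepsilon)$ as stated, I would apply the telescoping/Abel-summation procedure already used in the proof of Lemma~\ref{change}: each residual $D_\vs\vf-(\vs\cdot\nabla)\vf_A$ on a single cell carries one extra factor of $\varepsilon$ times a bounded second difference of $\vf$, and the sums over $\vs_1,\vs_2,\vs$ converge absolutely by Lemma~\ref{A4 come}. Rearranging produces the desired lower bound.

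The main obstacle is the $\vf$-uniformity of the $O(\varepsilon)$ correction. Unlike Lemma~\ref{change}, there is no smooth background $\vv$ to Taylor-expand against, so the correction must be read off purely from the discrete structure of $\mathcal{Q}_\varepsilon$. The subtle cross-cell misfit terms of the form $U''(\vs-\vd)(\vf^{A+}_{\vx+\vs}-\vf^{A-}_\vx)^2$, in which values of $\vf$ at two different lattice points appear, must be matched with the piecewise-linear interpolant across triangles by a Cauchy--Schwarz estimate using $\|\vf_A\|_1\le C\|\vf\|_{X_\varepsilon}$; the parallelogram-averaging identity employed for $R_5$ in the proof of Lemma~\ref{change} is the natural tool for absorbing the residuals into $O(\varepsilon)\|\vf\|_{X_\varepsilon}^2$.
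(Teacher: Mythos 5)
Your overall architecture --- rewriting the continuum form $\langle\delta^2E^A_{\rm PN}[\vzero]\vf_A,\vf_A\rangle_0$ as a cell-by-cell discrete sum (triangles for the elastic part, parallelograms $P_{\vx}$ for the misfit part), subtracting it from the atomistic form, identifying the resulting gap with $\Delta_A$, and absorbing the misfit residual into $O(\varepsilon)\|\vf\|_{X_\varepsilon}^2$ via the $R_4$/$R_5$-type averaging identities --- matches the paper's. Your last paragraph on the cross-cell misfit terms is essentially the paper's treatment of its residual $R_{\rm g2}$.

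The genuine gap is your claim that ``each residual $D_{\vs}\vf-(\vs\cdot\nabla)\vf_A$ on a single cell carries one extra factor of $\varepsilon$ times a bounded second difference of $\vf$.'' Here $\vf$ is an arbitrary element of $X_\varepsilon$, not a smooth field: the $X_\varepsilon$-norm controls only first difference quotients, so second difference quotients of $\vf$ are not bounded by $\|\vf\|_{X_\varepsilon}$ with a gain of $\varepsilon$, and for a non-nearest-neighbour $\vs$ the quantity $D_{\vs}\vf-(\vs\cdot\nabla)\vf_A$ is an $O(1)$ (not $O(\varepsilon)$) combination of first difference quotients. If your estimate were correct it would force the entire elastic gap to be $O(\varepsilon)$, i.e.\ $\Delta_A=0$ for every admissible potential, contradicting Remark \ref{Ipni} (which shows $\Delta_A=0$ only for nearest-neighbour interactions) and rendering Assumption A8 vacuous. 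The paper's mechanism is different: by the construction of $\vf_A$ and \eqref{triangles}, the \emph{nearest-neighbour} elastic terms (the sets $A_1$, $A_2$ of \eqref{nearest set}) cancel \emph{exactly} against the continuum integral, and the remaining non-nearest-neighbour residual $R_{\rm g1}$ --- which is genuinely $O(1)$ --- has its extremal value over the unit sphere of $X_\varepsilon$ taken as the \emph{definition} of $\Delta_A$ (this is the explicit formula the lemma promises and that A8 refers to); no factor of $\varepsilon$ is ever extracted from the elastic residual. Relatedly, your step 2 obtains only $\eta(\varepsilon)\to0$ from the limit defining $\Delta_A$, and the upgrade to the stated $O(\varepsilon)$ rate rests entirely on the flawed step 3, so the inequality as stated is not established by your argument.
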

\begin{proof}
	Thanks to \eqref{A no part} and \eqref{ P no part}, we only need to calculate the following parts:
	\begin{align}
	&R_\text{g1}\notag\\=&\lim_{R\to+\infty}\frac{1}{2R}\sum_{\vx\in\mathbb{L}_{R}}\Big\{\frac{\varepsilon^2}{6}\sum_{\langle\vs_1,\vs_2\rangle}\sum_{i+j=2}\tbinom{2}{i}\partial_{ij}V(\vs_1,\vs_2)(D_{\vs_1}\vf_{\vx}^{A\pm})^i(D_{\vs_2}v_{\vx}^{A\pm})^j\notag\\&+\frac{\varepsilon^2}{2}\sum_{\langle\vs_1,\vs_2\rangle}\sum_{i+j=2}\tbinom{2}{i}\partial_{ij}V(\vp+\vs_1,\vs_2)(D^{\vp}_{\vs_1}\vf_{\vx}^{A\pm})^i(D_{\vs_2}\vf_{\vx}^{A\pm})^j\notag\\&-\lim_{R\to+\infty}\frac{1}{R}\int_{-R}^{R}\int_{\mathbb{R}}\left[\alpha_1(f_A)_x^2+\alpha_2(f_B)_y^2\right]\,\D x\,\D y.\label{g1}\end{align}
	\begin{align}
	R_\text{g2}=&\lim_{R\to+\infty}\frac{1}{2R}\sum_{\vx\in\mathbb{L}_{R}}\Big\{\varepsilon^2\sum_{\vs}\big[U''(\vs-\vd)(\vf^{A+}_{\vx+\vs}-\vf^{A-}_{\vx})^2+U''(\vs-{\vp}-\vd)\notag\\&\times(\vf^{B+}_{\vx+\vs}-\vf^{A-}_{\vx})^2\big]\Big\}-\lim_{R\to+\infty}\frac{1}{R}\int_{-R}^{R}\int_{\mathbb{R}}2\nabla^2\gamma(\vzero)\vf^2_A\,\D x\,\D y.\label{g2}
	\end{align}
	Due to \eqref{R4} and \eqref{R5}, we have $$R_\text{g2}\le C\varepsilon \|\vf\|_{X_{\varepsilon}}^{2}.$$
	By the the definition of $\vf_A$ and \eqref{triangles}, we have
	\begin{align}
	R_\text{g1}=&\lim_{R\to+\infty}\frac{1}{2R}\sum_{\vx\in\mathbb{L}_{R}}\sum_{\langle\vs_1,\vs_2\rangle\backslash{A_1}}\sum_{i+j=2}\tbinom{2}{i}\notag\\&\Bigg[\frac{\varepsilon^2}{6}\partial_{ij}V(\vs_1,\vs_2)(D_{\vs_1}\vf_{\vx}^{A\pm})^i(D_{\vs_2}\vv_{\vx}^{A\pm})^j\notag\\&+\frac{\varepsilon^2}{2}\sum_{\langle\vs_1,\vs_2\rangle\backslash{A_2}}\sum_{i+j=2}\tbinom{2}{i}\partial_{ij}V(\vp+\vs_1,\vs_2)(D^{\vp}_{\vs_1}\vf_{\vx}^{A\pm})^i(D_{\vs_2}\vf_{\vx}^{A\pm})^j\Bigg]\notag\\&-\lim_{R\to+\infty}\frac{1}{4\sqrt{3}R}\int_{-R}^{R}\int_{\mathbb{R}}\sum_{i+j=2}\tbinom{2}{i}\notag\\&\Bigg\{\frac{1}{3}\sum_{\langle\vs_1,\vs_2\rangle\backslash{A_1}}\partial_{ij}V(\vs_1,\vs_2)
	\big[(\vs_1)\cdot(\nabla {\vf_A}^\pm)\big]^i\big[(\vs_2)\cdot(\nabla {\vf_A}^\pm)\big]^j-\sum_{\langle\vs_1,\vs_2\rangle\backslash{A_2}}\notag\\&\partial_{ij}V(\vs_1+ \vp,\vs_2)\big[(\vs_1+\vp)\cdot(\nabla {\vf_A}^\pm)\big]^i\big[(\vs_2)\cdot(\nabla {\vf_A}^\pm)\big]^j\Bigg\}\,\D x\,\D y,
	\end{align}
	where \begin{align}
	A_1:=&\{(\ve_1,\ve_2),(-\ve_1,\ve_2-\ve_1),(\ve_1-\ve_2,-\ve_2\},\notag\\A_2:=&\{(\vzero,\ve_2),(\vzero,\ve_1),(-\ve_2,-\ve_2),(-\ve_2,-\ve_2+\ve_1),\notag\\&(-\ve_1,-\ve_1),(-\ve_1,-\ve_1+\ve_2)\}\label{nearest set}.
	\end{align}
	Since $\vf_A$ is defined by $\vf$, $R_\text{g1}$ is the function of $\vf$. We define the $\Delta_A$:
	\begin{equation}
	\Delta_A:=-\lim_{\varepsilon\to 0}\sup_{\|\vf\|_{X_\varepsilon}=1}R_\text{g1}.\label{delta}
	\end{equation}
	
	This completes the proof.
\end{proof}

\begin{figure}[h]%current location
		\centering
		\scalebox{0.2}{\includegraphics{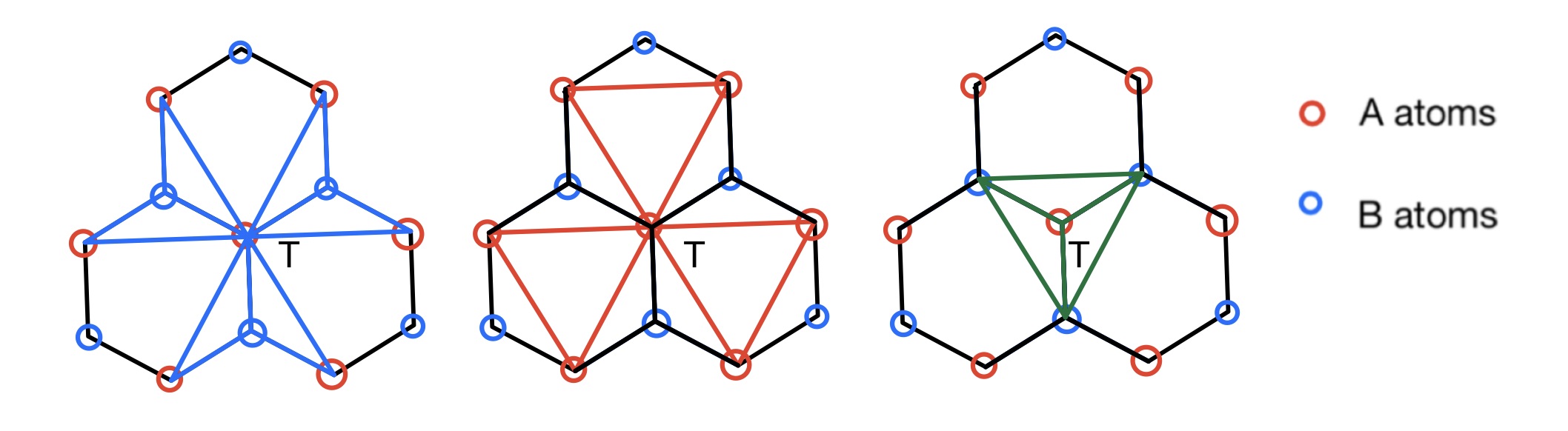}}
		\caption{Interpolation polynomial for nearest interaction.}
		\label{nearest} %framework,fig1
	\end{figure}

\begin{myre}\label{Ipni}
	In the \textbf{A8} and Lemma \ref{Delta}, we assume that $\Delta_A$ is small, it means that we assume the ``nearest" interactions dominate the elastic energy. Definition of the ``nearest" interactions on the complex lattice is illustrated in Figure~\ref{nearest}.
	For the $A$ atom, $T$, there are 12 triangles containing $T$: six blue triangles, three red triangles and three green triangles (come from B part). In Figure~\ref{nearest}, the blue triangles are for the  $AAB$ interactions, and the red triangles are for the $AAA$ interactions. In addition, in the part B, there are three $ABB$ interactions containing $T$. In other words, if we consider  those 12 interactions as nearest interactions in elastic energy, $\Delta_A=0$ can be obtained. Hence we define it as \textbf{interpolation polynomial for nearest interaction}.
\end{myre}

\begin{mylem}\label{norm di}
	Suppose that A1--A6 hold, $f\in X_{\varepsilon}$. There exist constants $\varepsilon_0$ and $C$, such that for $0<\varepsilon<\varepsilon_0$,
	\begin{equation}
	(1- \Delta_A-C\varepsilon)\|\vf\|^2_{X_\varepsilon}\le \|\vf_A\|^2_{X_0}\le(1+\Delta_A+C\varepsilon)\|\vf\|^2_{X_\varepsilon},
	\end{equation}
	where the $\vf_A$ defined in Lemma \ref{well define}.
\end{mylem}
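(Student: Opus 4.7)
The plan is to decompose
\begin{align*}
\|\vf_A\|_{X_0}^2 - \|\vf\|_{X_\varepsilon}^2 = I_\text{elas} + I_\text{mis},
\end{align*}
where $I_\text{elas}$ compares the continuous elastic quadratic form $\alpha_1\big((\partial_x f_A^+)^2 + (\partial_x f_A^-)^2\big)_v + \alpha_2\big((\partial_y f_A^+)^2 + (\partial_y f_A^-)^2\big)_v$ with the discrete elastic form $(\vf^+,\vf^+)_a + (\vf^-,\vf^-)_a$, and $I_\text{mis}$ compares $\tfrac{4\sqrt{3}}{3}(\vf_A^\perp,\vf_A^\perp)_v$ with the discrete sum $(\vf^\perp,\vf^\perp)_\varepsilon$. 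I then estimate each piece separately and show the misfit gap is $O(\varepsilon^2)$ while the elastic gap is controlled by $\Delta_A$ plus an $O(\varepsilon)$ correction.

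For $I_\text{mis}$ I exploit the piecewise-linearity of $\vf_A$. On each reference triangle $T$ of area $\tfrac{\sqrt{3}\varepsilon^2}{4}$ with vertex values $f_1,f_2,f_3$, the exact mass-matrix quadrature
\begin{align*}
\int_T f_A^2\,\D x\,\D y = \frac{|T|}{3}\sum_{i=1}^{3} f_i^2 - \frac{|T|}{12}\sum_{i<j}(f_i-f_j)^2
\end{align*}
splits the integral into a lumped-mass piece and a gradient-like correction. Summing over all triangles, the lumped-mass piece matches the discrete sum $(\vf^\perp,\vf^\perp)_\varepsilon$ precisely because the prefactor $\tfrac{4\sqrt{3}}{3}$ is the reciprocal of the $A$-sublattice cell area, while the correction is bounded by $C\varepsilon^2\|\nabla \vf_A\|_v^2$, which by uniform ellipticity (Assumption A5) is in turn controlled by $C\varepsilon^2\|\vf\|_{X_\varepsilon}^2$. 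Hence $|I_\text{mis}| \leq C\varepsilon^2\|\vf\|_{X_\varepsilon}^2$.

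For $I_\text{elas}$ I identify both quadratic forms with the split energies already analyzed. Inspecting \eqref{A no part} against the definition of $(\cdot,\cdot)_A$ (and its $B$-analog) shows that $(\vf^+,\vf^+)_a + (\vf^-,\vf^-)_a$ equals the sum of the intra-layer parts of $\langle \delta^2 E_\text{a}^A[\vzero]\vf,\vf\rangle_\varepsilon$ and $\langle \delta^2 E_\text{a}^B[\vzero]\vf,\vf\rangle_\varepsilon$; formula \eqref{ P no part} gives the matching identification on the PN side against the $\alpha_1,\alpha_2$ parts of $\|\vf_A\|_{X_0}^2$. Therefore $I_\text{elas}$ equals, up to the inter-layer discrepancy $R_\text{g2}$ that was shown $O(\varepsilon)\|\vf\|_{X_\varepsilon}^2$ in the proof of Lemma~\ref{Delta}, the sum over the $A$- and $B$-halves of the quantity $R_\text{g1}$ from \eqref{g1}. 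The upper bound $I_\text{elas} \leq \Delta_A\|\vf\|_{X_\varepsilon}^2 + C\varepsilon\|\vf\|_{X_\varepsilon}^2$ then follows from the supremum definition of $\Delta_A$ in Assumption~A8, and the matching lower bound $I_\text{elas} \geq -\Delta_A\|\vf\|_{X_\varepsilon}^2 - C\varepsilon\|\vf\|_{X_\varepsilon}^2$ follows directly from Lemma~\ref{Delta}.

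The main delicate step is that a single interpolation $\vf_A$ appears in both halves of $\|\vf_A\|_{X_0}^2$, whereas the cleanest $B$-analog of Lemma~\ref{Delta} would use $\vf_B$. However, $\vf_A$ and $\vf_B$ are both piecewise-linear interpolants of the same underlying nodal field on $\mathbb{L}_{\vp}$ and differ pointwise by at most $C\varepsilon\|\nabla\vf_A\|_\infty$, so replacing $\vf_B$ by $\vf_A$ in the $B$-side changes the continuous elastic form by at most $O(\varepsilon)\|\vf\|_{X_\varepsilon}^2$, which is absorbed into the $C\varepsilon$ term. Combining the elastic and misfit estimates then yields the two-sided bound $(1 - \Delta_A - C\varepsilon)\|\vf\|_{X_\varepsilon}^2 \le \|\vf_A\|_{X_0}^2 \le (1 + \Delta_A + C\varepsilon)\|\vf\|_{X_\varepsilon}^2$.
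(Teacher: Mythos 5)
Your overall route is the same as the paper's: split $\|\vf_A\|_{X_0}^2-\|\vf\|_{X_\varepsilon}^2$ into an elastic gap, identified with $R_\text{g1}$ and bounded through the definition of $\Delta_A$ in Lemma~\ref{Delta} and Assumption A8, plus a misfit-norm gap $\frac{4\sqrt{3}}{3}\|\vf_A^{\perp}\|_{v}^2-\|\vf^\perp\|^2_\varepsilon$ bounded by $C\varepsilon\|\vf\|_{X_\varepsilon}^2$ (this is the paper's $R_5$-type computation). Two points need repair. First, in the misfit estimate the triangles of the $A$-triangulation do not all have area $\frac{\sqrt{3}}{4}\varepsilon^2$: the $AAB$ triangles, obtained by joining a $B$ atom at the centroid of an up-triangle to its three $A$ vertices, have area $\frac{\sqrt{3}}{12}\varepsilon^2$. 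Consequently the lumped-mass weights of $A$ nodes and $B$ nodes differ ($\tfrac{5\sqrt{3}}{12}\varepsilon^2$ versus $\tfrac{\sqrt{3}}{12}\varepsilon^2$ per node), so the lumped-mass sum does \emph{not} reproduce $(\vf^\perp,\vf^\perp)_\varepsilon$ exactly; the residue per cell is of the form $C\varepsilon^2(f^{A}-f^{B})(f^{A}+f^{B})$, which sums to $O(\varepsilon)\|\vf\|_{X_\varepsilon}^2$ rather than $O(\varepsilon^2)$. This is harmless for the lemma, but the $O(\varepsilon^2)$ bound you state is not what your computation yields.

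Second, and this is the genuine gap: to fold the $B$-half of the elastic comparison into $\|\vf_A\|_{X_0}^2$ you replace $\vf_B$ by $\vf_A$ and justify this by the pointwise bound $\|f_A-f_B\|_\infty\le C\varepsilon\|\nabla f_A\|_\infty$. Pointwise closeness of the two interpolants does not control the difference of the gradient forms $\int|\nabla f_A|^2-\int|\nabla f_B|^2$: the two gradients are piecewise-constant first difference quotients of the same nodal data on \emph{different} triangulations, and their difference is a second difference of the data, which for a general (non-smooth) $f\in X_\varepsilon$ is of the same order as the first differences themselves, not $O(\varepsilon)$ smaller. As written this step fails. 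The paper avoids the issue by never introducing $\vf_B$ into this lemma: $R_\text{g1}$ in \eqref{g1} is defined directly as the gap between the discrete elastic form and the continuous form built from the single interpolant, and $\Delta_A$ is by definition the supremum of that gap, so the elastic bound is essentially tautological given A8. To keep your route you must either absorb the $\vf_A$/$\vf_B$ discrepancy into the quantity whose supremum defines $\Delta_A$ (as the paper does) or establish an $H^1$-level, not $L^\infty$-level, comparison between the two interpolants, which does not hold in general.
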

\begin{proof}
	\begin{align}
	&\Big|\|\vf_A\|^2_{X_0} - \|\vf\|^2_{X_\varepsilon} \Big |=\left|R_\text{g1}+\frac{4\sqrt{3}}{3}\|\vf_A^{\perp}\|_{\vv}^2-\|\vf^\perp\|^2_\varepsilon\right|\notag\\\le&\big|-\Delta_A+C\varepsilon\big|\|\vf\|^2_{X_\varepsilon},
	\end{align}
	where the last inequality is due to Lemma \ref{Delta}.
\end{proof}

\begin{mypro}\label{sta a}
	Suppose that A1--A8 hold, and $\vv$ is the dislocation solution of the PN model in Theorem \ref{Theorem 1}. There exist constants $\varepsilon_0$ and $C>0$, such that for $0<\varepsilon<\varepsilon_0$ and $\vf\in X_{\varepsilon}$, we have
	$$\left\langle\delta^{2} E_\text{a}[\vv] \vf, \vf\right\rangle_{\varepsilon}\ge C\|\vf\|^2_{X_\varepsilon}.$$
\end{mypro}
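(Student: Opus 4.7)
The plan is to transfer the stability of the Peierls--Nabarro model (Proposition~\ref{stability of PN} together with Assumption A7) to the atomistic side via the two piecewise linear interpolants $\vf_A$ and $\vf_B$ built in Section~9.2, using Lemmas~\ref{change}, \ref{Delta}, and \ref{norm di} to control the three sources of error: (i) freezing the background from $\vv$ to the perfect lattice $\vzero$, (ii) the intrinsic atomistic--continuum gap at $\vzero$, and (iii) the norm transfer from the discrete space $X_\varepsilon$ to the continuum space $X_0$. Assumption A8 then ensures that the accumulated gap is strictly smaller than the PN stability threshold, which leaves a positive lower bound after adding the $A$ and $B$ contributions.

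\textbf{Execution.} First split $E_\text{a}=E_\text{a}^A+E_\text{a}^B$ as in \eqref{A A part}--\eqref{A B part} and treat each piece symmetrically. For the $A$ piece, chain the inequalities
\begin{align}
\langle \delta^2 E_\text{a}^A[\vv]\vf,\vf\rangle_\varepsilon
&= \langle \delta^2 E_\text{PN}^A[\vv]\vf_A,\vf_A\rangle_0 + \bigl[\langle \delta^2 E_\text{a}^A[\vzero]\vf,\vf\rangle_\varepsilon - \langle \delta^2 E_\text{PN}^A[\vzero]\vf_A,\vf_A\rangle_0\bigr] + O(\varepsilon)\|\vf\|_{X_\varepsilon}^2 \notag \\
&\ge \bar\vartheta\, \|\vf_A\|_{X_0}^2 - \Delta_A \|\vf\|_{X_\varepsilon}^2 + O(\varepsilon)\|\vf\|_{X_\varepsilon}^2 \notag \\
&\ge \bigl[\bar\vartheta(1-\Delta_A - C\varepsilon) - \Delta_A - C\varepsilon\bigr]\|\vf\|_{X_\varepsilon}^2,
\end{align}
where the identity is Lemma~\ref{change}, the second line uses Assumption A7 to bound the PN term by $\bar\vartheta \|\vf_A\|_{X_0}^2$ and Lemma~\ref{Delta} to bound the bracketed gap, and the last line invokes the norm comparison $\|\vf_A\|_{X_0}^2 \ge (1-\Delta_A - C\varepsilon)\|\vf\|_{X_\varepsilon}^2$ from Lemma~\ref{norm di}. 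The same reasoning with $E_\text{PN}^B$, $\vf_B$, $\Delta_B$ applies to the $B$ piece. Summing the two chains gives
\begin{align}
\langle \delta^2 E_\text{a}[\vv]\vf,\vf\rangle_\varepsilon \ge \bigl[2\bar\vartheta(1-\Delta) - 2\Delta - C\varepsilon\bigr]\|\vf\|_{X_\varepsilon}^2,\qquad \Delta:=\max\{\Delta_A,\Delta_B\}.
\end{align}
By Assumption A8, $\Delta < \tfrac13$ and $\Delta < \tfrac13\bar\vartheta$, so
$$2\bar\vartheta(1-\Delta) - 2\Delta \;>\; 2\bar\vartheta\cdot \tfrac{2}{3} - \tfrac{2\bar\vartheta}{3} \;=\; \tfrac{2\bar\vartheta}{3} \;>\; 0.$$
Choosing $\varepsilon_0$ small enough to absorb the $C\varepsilon$ remainder yields the desired positive constant $C$ (for instance $C=\bar\vartheta/3$).

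\textbf{Main obstacle.} The technical work has already been done in Lemmas~\ref{change}, \ref{Delta}, \ref{norm di}; what remains within this proposition is the bookkeeping that keeps three small quantities simultaneously under control---the intrinsic gap $\Delta_A$ from Lemma~\ref{Delta}, the interpolation norm deficit $\Delta_A$ from Lemma~\ref{norm di}, and the PN stability constant $\bar\vartheta$---so that the composite coefficient $\bar\vartheta(1-\Delta) - \Delta$ remains strictly positive. The estimate $\bar\vartheta(1-\Delta) - \Delta > \bar\vartheta/3$ under A8 is tight but does close, and this is precisely what motivates the particular thresholds in Assumption A8 as well as the choice (Section~9.2) to use \emph{two} linear interpolants rather than a single high-degree one: keeping each $\Delta_\kappa$ uniformly small is what makes the whole argument go through (cf.\ Remark~\ref{Ipni}, where nearest-neighbor interactions give $\Delta_A=0$).
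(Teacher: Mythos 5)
Your proposal is correct and follows essentially the same route as the paper's proof: split the second variation into the $A$ and $B$ parts, apply Lemma~\ref{change} to reduce to the gap at the perfect lattice, bound that gap by Lemma~\ref{Delta}, transfer norms with Lemma~\ref{norm di}, invoke the PN stability (Proposition~\ref{stability of PN} together with A7) for $\bar\vartheta>0$, and close with A8. The only differences are cosmetic bookkeeping of the final constant (the paper settles for $\tfrac14\bar\vartheta$ per piece and $\tfrac16\vartheta$ overall, while you track $\tfrac13\bar\vartheta$ per piece), which does not affect the validity of the argument.
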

\begin{proof}
	By Lemmas \ref{change},  \ref{Delta}, \ref{norm di} and Proposition \ref{stability of PN}, we have \begin{align}
	&\left\langle\delta^{2} E^A_\text{a}[\vv] \vf, \vf\right\rangle_{\varepsilon}\ge \left\langle\delta^{2} E_{\text{PN}}^{A}[\vv] \vf_A, \vf_A\right\rangle_0-\Delta_A\|\vf\|^2_{X_\varepsilon}+C\varepsilon\|\vf\|^2_{X_\varepsilon}\notag\\\ge &\bar{\vartheta} \|\vf_A\|^2_{X_0}-\Delta_A\|\vf\|^2_{X_\varepsilon}+C\varepsilon\|\vf\|^2_{X_\varepsilon}\ge\big[\bar{\vartheta}(1-\Delta_A)-\Delta_A-C\varepsilon\big]\|\vf\|^2_{X_\varepsilon}\notag\\\ge& \frac{1}{4}\bar{\vartheta}\|\vf\|^2_{X_\varepsilon},
	\end{align}
	where the last inequality is due to A6. Similar property can be obtained in $\delta^{2} E^B_\text{a}[\vv]$ part. Therefore we obtain \begin{equation}
	\left\langle\delta^{2} E_\text{a}[\vv] \vf, \vf\right\rangle_{\varepsilon}=\left\langle\delta^{2} E^A_\text{a}[\vv] \vf, \vf\right\rangle_{\varepsilon}+\left\langle\delta^{2} E^B_\text{a}[\vv] \vf, \vf\right\rangle_{\varepsilon}\ge\frac{1}{2}\bar{\vartheta}\|\vf\|^2_{X_\varepsilon}\ge\frac{1}{6}\vartheta\|\vf\|^2_{X_\varepsilon}.
	\end{equation}
\end{proof}

Now we have consistency of PN model (Proposition \ref{consistency pro}) and stability of atomistic model (Proposition \ref{sta a}), we can proof Theorem \ref{thm:Atom} similarly as in \cite{luo2018atomistic}.

\begin{mylem}\label{near small}
	Suppose that Assumptions A1--A6 hold. Let $\vv$ be the dislocation solution in PN model. There exist constants $\varepsilon_0$ and $C$, such that for $0<\varepsilon<\varepsilon_0$ and $\vu,\vu'\in S_{\varepsilon}$ satisfying $\|\vu-\vv\|_{X_{\varepsilon}}\le \varepsilon^2$  and $\|\vu'-\vv\|_{X_{\varepsilon}}\le \varepsilon^2$, we have
	\begin{equation}
	\left|\left\langle\left(\delta^{2}E_\text{PN} [\vu]-\delta^{2} E_\text{a}\left[\vu^{\prime}\right]\right) \vf, \vg\right\rangle_{\varepsilon}\right| \leq C\varepsilon^{-1} \left\|\vu-\vu^{\prime}\right\| _{X_{\varepsilon}}\|\vf\|_{X_{\varepsilon}}\|\vg\|_{X_{\varepsilon}}
	\label{lemm 9}
	\end{equation}
	for all $\vf,\vg\in X_\varepsilon$ and $C$ is independent on $\varepsilon$.
\end{mylem}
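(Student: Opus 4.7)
The lemma is a Lipschitz-type estimate for the second variation with respect to the base configuration; in the Newton-type argument used to conclude Theorem \ref{thm:Atom} the factor $\varepsilon^{-1}\|\vu-\vu'\|_{X_\varepsilon}$ will be of order $\varepsilon$, providing the slack needed to close the iteration. The plan is to split
\begin{equation*}
\delta^2 E_{\text{PN}}[\vu] - \delta^2 E_{\text{a}}[\vu'] = \bigl(\delta^2 E_{\text{a}}[\vu] - \delta^2 E_{\text{a}}[\vu']\bigr) + \bigl(\delta^2 E_{\text{PN}}[\vu] - \delta^2 E_{\text{a}}[\vu]\bigr)
\end{equation*}
and treat the two pieces separately.

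For the first, purely atomistic, difference I would begin from the explicit formula for $\langle\delta^2 E_{\text{a}}[\vu]\vf,\vg\rangle_\varepsilon$ (the bilinear generalization of \eqref{A have part}) and Taylor expand each factor $\partial_{ij}V(\vs_1+\varepsilon D_{\vs_1}\vu,\vs_2+\varepsilon D_{\vs_2}\vu)$ and each $U''(\vs+\vu_{\cdot}^{\kappa+}-\vu_{\cdot}^{\kappa'-})$ around the analogous quantity evaluated at $\vu'$. The first-order remainder produces a third-derivative $\partial_{ijk}V$ (or $U^{(3)}$) multiplied by an increment $\varepsilon D_{\vs}(\vu-\vu')$ and by $D_{\vs_1}\vf\cdot D_{\vs_2}\vg$. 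Bounding the increment pointwise by $2\|\vu-\vu'\|_{\ell^\infty(\sL_\vp)}$, summing over $\vs_1,\vs_2,\vs$ using the decay bounds of Lemma \ref{A4 come}, and applying Cauchy--Schwarz on the discrete bilinear form in $(\vf,\vg)$ yields
\begin{equation*}
\bigl|\langle(\delta^2 E_{\text{a}}[\vu]-\delta^2 E_{\text{a}}[\vu'])\vf,\vg\rangle_\varepsilon\bigr|\le C\,\|\vu-\vu'\|_{\ell^\infty(\sL_\vp)}\,\|\vf\|_{X_\varepsilon}\|\vg\|_{X_\varepsilon}.
\end{equation*}
The $\varepsilon^{-1}$ factor in the claimed bound then arises from the discrete inverse-type estimate $\|\vu-\vu'\|_{\ell^\infty(\sL_\vp)}\le C\varepsilon^{-1}\|\vu-\vu'\|_{X_\varepsilon}$, proved by passing to the piecewise linear extension constructed in Lemma \ref{well define}, whose continuous $H^1$ norm is equivalent to the $X_\varepsilon$ norm, and combining a 2D Sobolev embedding with the standard inverse estimate on triangles of diameter $\varepsilon$.

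For the second (consistency-type) difference, the key observation is that since $\|\vu-\vv\|_{X_\varepsilon}\le\varepsilon^2$ the analysis of Lemmas \ref{change} and \ref{Delta}, carried out at $\vv$, transfers to $\vu$ with only $O(\varepsilon^2)$ corrections obtained by one further Taylor expansion of the nonlinear factors of $V$ and $U$; together with assumption A8 this yields
\begin{equation*}
\bigl|\langle(\delta^2 E_{\text{PN}}[\vu]-\delta^2 E_{\text{a}}[\vu])\vf,\vg\rangle_\varepsilon\bigr|\le C\varepsilon\,\|\vf\|_{X_\varepsilon}\|\vg\|_{X_\varepsilon},
\end{equation*}
which is absorbed into the final bound in the regime of interest, where $\varepsilon^{-1}\|\vu-\vu'\|_{X_\varepsilon}$ is at least of order $\varepsilon$.

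\textbf{Main obstacle.} The principal technical difficulty is the justification of the discrete inverse inequality $\|\cdot\|_{\ell^\infty(\sL_\vp)}\le C\varepsilon^{-1}\|\cdot\|_{X_\varepsilon}$: in two dimensions the embedding $H^1\hookrightarrow L^\infty$ fails, so one cannot simply quote Sobolev. Instead, one must exploit both the lattice spacing and the $y$-averaged structure of $X_\varepsilon$: locally on each triangle the piecewise linear interpolant satisfies the sharp inverse estimate $\|\cdot\|_{L^\infty(T)}\le C\varepsilon^{-1}\|\cdot\|_{L^2(T)}$, and the uniform-in-$y$ control of $\vv$ from Theorem \ref{Theorem 1} allows the $y$-average in $X_\varepsilon$ to be replaced by a pointwise-in-$y$ $L^2$ bound; combining the two produces the stated rate. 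A secondary point is the $\ell^\infty$ control of discrete gradients $D_{\vs}(\vu-\vu')$, which reduces to the above via $|\varepsilon D_{\vs}(\vu-\vu')|\le 2\|\vu-\vu'\|_{\ell^\infty(\sL_\vp)}$.
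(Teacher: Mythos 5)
There is a genuine gap, and it comes from your choice of decomposition. You split the difference as $(\delta^2 E_{\text{a}}[\vu]-\delta^2 E_{\text{a}}[\vu'])+(\delta^2 E_{\text{PN}}[\vu]-\delta^2 E_{\text{a}}[\vu])$ and bound the second (consistency-type) piece by $C\varepsilon\|\vf\|_{X_\varepsilon}\|\vg\|_{X_\varepsilon}$, to be ``absorbed'' because $\varepsilon^{-1}\|\vu-\vu'\|_{X_\varepsilon}$ is ``at least of order $\varepsilon$ in the regime of interest.'' The lemma has no hypothesis bounding $\|\vu-\vu'\|_{X_\varepsilon}$ from below: taking $\vu=\vu'$ makes the right-hand side of \eqref{lemm 9} vanish, so your second piece must be identically zero for the stated inequality to hold, and a fixed $O(\varepsilon)$ bound cannot be absorbed into a right-hand side that can be made arbitrarily small. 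Moreover, the $O(\varepsilon)$ bound you assert for that piece does not follow from Lemmas \ref{change} and \ref{Delta} together with A8: those results give only a one-sided estimate with a gap $\Delta_A$ that is an $O(1)$ constant (assumed $<\bar{\vartheta}/3$, not tending to $0$), and they compare $\langle\delta^2E^A_{\text{a}}\vf,\vf\rangle_\varepsilon$ with $\langle\delta^2E^A_{\text{PN}}\vf_A,\vf_A\rangle_0$ — the PN form is tested on the interpolant $\vf_A$ under the continuum pairing, which is not the object appearing in \eqref{lemm 9}. What the paper actually proves (and what is used in Proposition \ref{stta} and in the contraction argument for Theorem \ref{thm:Atom}) is a Lipschitz bound for the map $\vu\mapsto\delta^2E_{\text{a}}[\vu]$ alone: the entire proof compares $\partial_{ij}V$ and $U''$ evaluated at $\vu$ versus $\vu'$ inside the \emph{atomistic} second variation; the ``$E_{\text{PN}}$'' in the statement should be read as the atomistic energy. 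The telltale sign you should have caught is precisely the $\vu=\vu'$ degeneration.

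Your treatment of the first piece is essentially the paper's mechanism (Taylor expansion to third derivatives of $V$ and $U$, summability from Lemma \ref{A4 come}, Cauchy--Schwarz on the bilinear form), but the source of the $\varepsilon^{-1}$ is different and your version is both wasteful and harder to justify. The paper never passes through a global $\ell^\infty$ bound on $\vu-\vu'$: in the elastic part the increments enter only as $\varepsilon D_{\vs}(\vu-\vu')$, which is bounded by $\|\vu-\vu'\|_{X_\varepsilon}$ with \emph{no} loss, so that contribution is actually $O(\|\vu-\vu'\|_{X_\varepsilon})$; the $\varepsilon^{-1}$ arises only in the misfit part, where the argument of $U''$ contains the unscaled difference $\vu^{A+}_{\vx+\vs}-\vu^{A-}_{\vx}$, decomposed as $\varepsilon D_{\vs}(\cdot)+(\cdot)^{\perp}$ and bounded pointwise by $\varepsilon^{-1}\|\cdot\|_{X_\varepsilon}$. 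Your proposed inverse estimate $\|\cdot\|_{\ell^\infty(\sL_{\vp})}\le C\varepsilon^{-1}\|\cdot\|_{X_\varepsilon}$ is also more delicate than you acknowledge: the $X_\varepsilon$ norm is a $y$-averaged quantity ($\lim_R\frac{1}{2R}\sum_{\vx\in\mathbb{L}_R}$), so the $L^2$ norm over a single triangle is not controlled by it without invoking uniformity in $y$, and your local triangle inverse estimate by itself does not close this step.
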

\begin{proof}
	
By definition of the norm in $X_\varepsilon$, we have for all $\vx\in \mathbb{L}$: \[|D_{\vs_1}(\vu_{\vx}^{A\pm}-\vv_{\vx}^{A\pm})|=\varepsilon^{-1}|D_{\vs_1}\varepsilon( \vu_{\vx}^{A\pm}-\vv_{\vx}^{A\pm})|\le\varepsilon^{-1}\|\vu-\vv\|_{X_\varepsilon}.\]Therefore we can get$$
	\left\|D_{\vs_1}(\vu-\vv)\right\|_{L_{\varepsilon}^{\infty}} \leq\varepsilon^{-1 }\|\vu-\vv\|_{X_{\varepsilon}} \leq 1.
	$$ Furthermore, we can get $\|D_{\vs_1}\vu\|_{L_{\varepsilon}^{\infty}}\le C$ since $\|D_{\vs_1}\vv\|_{L_{\varepsilon}^{\infty}}\le \|\nabla \vv\|_\infty\le C$. Similarly, $$\|D_{\vs_1}(\vu-\vu')\|_{L_{\varepsilon}^{\infty}}\le \varepsilon^{-1 }\|\vu-\vu'\|_{X_{\varepsilon}}\le 1.$$  Similar property can be obtain in $D^{\pm \vp}_{\vs_1}$.
	
	As for operator $V$, we can have
	\begin{align}
	&\sum_{\vs}\big|\partial_{ij}V(\vs_1+\varepsilon D_{\vs_1}\vu_{\vx}^{A\pm},\vs_2+\varepsilon D_{\vs_2}\vu_{\vx}^{A\pm})\notag\\&-\partial_{ij}V(\vs_1+\varepsilon D_{\vs_1}(\vu')_{\vx}^{A\pm},\vs_2+\varepsilon D_{\vs_2}(\vu')_{\vx}^{A\pm})\big|\notag\\\le&\sum_{\vs}(V_{(i,j+1),\vs_1,\vs_2}+V_{(i+1,j),\vs_1,\vs_2})\varepsilon\big[\left|(D_{\vs_2}+D_{\vs_1})\left(\vu_{\vx}^{A\pm}-(\vu')_{\vx}^{A\pm}\right)\right|\big] \notag\\\leq&C \left\|\vu^{\prime}-\vu\right\|_{X_{\varepsilon}},
	\end{align}
	where $\varepsilon$ is sufficiently small.
	Hence, we obtain \begin{align}
	\left|\left\langle\left(\delta^{2}E_\text{elas} [\vu]-\delta^{2} E_\text{a-el}\left[\vu^{\prime}\right]\right) \vf, \vg\right\rangle_{\varepsilon}\right|&\le C\left\|\vu^{\prime}-\vu\right\|_{X_{\varepsilon}}(\vf,\vg)_{X_\varepsilon}\notag\\&\le C \left\|\vu-\vu^{\prime}\right\| _{X_{\varepsilon}}\|\vf\|_{X_{\varepsilon}}\|\vg\|_{X_{\varepsilon}}.
	\end{align}

	For the $\gamma$-surface, we use this notation  in the proof:
	$${U}_{k,\vs,h}=\sup_{|\vxi-\vs+\frac{1}{2}| \leq h}|U^{(k)}(\vxi)|,$$where $k=0,1,2,3$. We can prove the same property of ${U}_{k,\vs,h}$ as in the proof of Lemma \ref{A4 come}:
	\begin{equation}
	\text{For all } h>0,~~ \text{there exists } H>0,\text{ such that }\sum_{\vs}{U}_{k,\vs,h}|\vs|^{k+1}\le H.\label{uk}
	\end{equation}
	Since $\vu,\vu'\in S_\varepsilon$, we can get $\|\vu\|_{\infty}<\infty,\|\vu'\|_{\infty}<\infty$. Thus we can denote $r=\|\vu\|_{\infty}+\|\vu'\|_{\infty}$ and get
	\begin{align}&\sum_{\vs}\left|U^{\prime \prime}\left(\vs+\vu_{\vx+\vs_2}^{A+}-\vu_{\vx}^{A-}-\vd\right)-U^{\prime \prime}\left(\vs+(\vu')_{\vx+\vs_2}^{A+}-(\vu')_{\vx}^{A-}-\vd\right)\right| \notag\\  \leq& \sum_{\vs}U_{3,\vs,r}\left|\varepsilon^{-1} D_{\vs_2}\left (\vu_{\vx}^{A+}-\vu_{\vx}^{\prime A+}\right)+\left(\vu_{\vx}^{\perp A+}-\vu_{\vx}^{\prime\perp A+}\right)\right| \notag\\ \leq& C \varepsilon^{-1}\left\|\vu^{\prime}-\vu\right\| _{X_{\varepsilon}}.
	\end{align}
	
	Therefore, we get \begin{align}
	\left|\left\langle\left(\delta^{2}E_\text{mis} [\vu]-\delta^{2} E_\text{a-mis}\left[\vu^{\prime}\right]\right) \vf, \vg\right\rangle_{\varepsilon}\right|&\le C\varepsilon^{-1}\left\|\vu^{\prime}-\vu\right\|_{X_{\varepsilon}}(\vf,\vg)_{X_\varepsilon}\notag\\&\le C \varepsilon^{-1}\left\|\vu-\vu^{\prime}\right\| _{X_{\varepsilon}}\|\vf\|_{X_{\varepsilon}}\|\vg\|_{X_{\varepsilon}}.
	\end{align}

	Finally, we can get Lemma \ref{near small} by direct calculation.
\end{proof}

\begin{mypro}\label{stta} Suppose that Assumptions A1--A8 hold. Let $\vv$ be the dislocation solution in PN model. There exist constants $\varepsilon_0$ and $C$, such that for $0<\varepsilon<\varepsilon_0$ and $\vu\in S_{\varepsilon}$ satisfying $\|\vu-\vv\|_{X_{\varepsilon}}\le \varepsilon^2$, we have for $\vf\in X_{\varepsilon} $
	\begin{equation}
	\left\langle\delta^{2} E_\text{a}[\vu] \vf, \vf\right\rangle_{\varepsilon}\ge C\|\vf\|^2_{X_\varepsilon},\label{prop 5}
	\end{equation}where $C$ is independent of $\varepsilon$.
\end{mypro}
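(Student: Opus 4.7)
The plan is to treat Proposition \ref{stta} as a perturbation of Proposition \ref{sta a}: the latter gives coercivity of $\delta^2 E_\text{a}$ exactly at the PN solution $\vv$, and what remains is to extend this to any $\vu$ in a small $X_\varepsilon$--neighborhood of $\vv$ by controlling the Lipschitz dependence $\vu\mapsto\delta^2 E_\text{a}[\vu]$.

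First I would write
\[
\langle \delta^2 E_\text{a}[\vu]\vf,\vf\rangle_\varepsilon
= \langle \delta^2 E_\text{a}[\vv]\vf,\vf\rangle_\varepsilon
+ \bigl\langle (\delta^2 E_\text{a}[\vu]-\delta^2 E_\text{a}[\vv])\vf,\vf\bigr\rangle_\varepsilon,
\]
and apply Proposition \ref{sta a} to obtain $\langle \delta^2 E_\text{a}[\vv]\vf,\vf\rangle_\varepsilon \ge \tfrac{1}{6}\vartheta\,\|\vf\|_{X_\varepsilon}^2$. For the perturbation term I would follow the strategy already used in the proof of Lemma \ref{near small}: term by term in \eqref{A A part}--\eqref{A B part}, the summand at $\vu$ differs from the summand at $\vv$ through derivatives of $V$ or $U$ evaluated at arguments that differ by $\vu-\vv$ (or by a difference quotient thereof). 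The hypothesis $\|\vu-\vv\|_{X_\varepsilon}\le \varepsilon^2$ forces $\|D_{\vs}(\vu-\vv)\|_{L^\infty_\varepsilon}\le\varepsilon$, so that a mean-value step combined with the fast-decay bounds of Lemma \ref{A4 come} produces the atomistic-to-atomistic analogue
\[
\bigl|\langle(\delta^2 E_\text{a}[\vu]-\delta^2 E_\text{a}[\vv])\vf,\vg\rangle_\varepsilon\bigr|
\le C\varepsilon^{-1}\|\vu-\vv\|_{X_\varepsilon}\|\vf\|_{X_\varepsilon}\|\vg\|_{X_\varepsilon}.
\]
Plugging $\|\vu-\vv\|_{X_\varepsilon}\le\varepsilon^2$ into this bound yields a perturbation of size at most $C\varepsilon\|\vf\|_{X_\varepsilon}^2$.

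Combining the two contributions gives
\[
\langle \delta^2 E_\text{a}[\vu]\vf,\vf\rangle_\varepsilon
\ge \left(\tfrac{1}{6}\vartheta - C\varepsilon\right)\|\vf\|_{X_\varepsilon}^2,
\]
and it suffices to shrink $\varepsilon_0$ so that $C\varepsilon_0 \le \tfrac{1}{12}\vartheta$ to close the argument with coercivity constant $\tfrac{1}{12}\vartheta$, independent of $\varepsilon$.

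The main obstacle I anticipate is deriving the atomistic-to-atomistic Lipschitz bound cleanly, since Lemma \ref{near small} as literally stated mixes $\delta^2 E_\text{PN}[\vu]$ and $\delta^2 E_\text{a}[\vu']$ on the two sides. However, inspecting its proof shows that the key controls---on differences of the form $U''(\cdot+\vu^\perp)-U''(\cdot+(\vu')^\perp)$ and $\partial_{ij}V(\cdot+\varepsilon D_{\vs}\vu)-\partial_{ij}V(\cdot+\varepsilon D_{\vs}\vu')$---are internal to whichever energy is being differentiated, and go through verbatim with $\delta^2 E_\text{a}$ on both sides. The characteristic $\varepsilon^{-1}$ factor comes from the misfit part, where $U''$ is evaluated at an argument containing the undifferentiated displacement $\vu^\perp$ rather than a difference quotient; this factor is harmless here precisely because it is paired with $\|\vu-\vv\|_{X_\varepsilon}=O(\varepsilon^2)$.
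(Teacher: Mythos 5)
Your proposal is correct and follows essentially the same route as the paper: the paper's proof simply invokes Lemma \ref{near small} with $\vu'=\vv$ and $\vg=\vf$ to bound the perturbation $\langle(\delta^{2}E_\text{a}[\vu]-\delta^{2}E_\text{a}[\vv])\vf,\vf\rangle_{\varepsilon}$ by $C\varepsilon\|\vf\|_{X_\varepsilon}^{2}$ and then applies Proposition \ref{sta a}. Your observation that Lemma \ref{near small} as stated mixes $\delta^{2}E_\text{PN}$ and $\delta^{2}E_\text{a}$ but that its proof carries over verbatim to the atomistic-to-atomistic comparison is exactly the (implicit) reading the paper relies on.
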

\begin{proof} Let $\vu'=\vv$ and $\vg=\vf$ in Lemma \ref{near small}, we have $$\left|\left\langle\left(\delta^{2} E_\text{a}[\vu]-\delta^{2} E_\text{a}\left[\vv\right]\right) \vf, \vf\right\rangle_{\varepsilon}\right| \leq C \varepsilon\|\vf\|^2_{X_{\varepsilon}}.$$ By Proposition \ref{sta a}, we have $\left\langle\delta^{2} E_\text{a}[\vu] \vf, \vf\right\rangle_{\varepsilon}\ge C\|\vf\|^2_{X_\varepsilon}$  for a sufficient small $\varepsilon$. \end{proof}

\subsection{Proof of Theorem \ref{thm:Atom}}.

\begin{proof}[Proof of Theorem 2]
	First, we define a close subset $B$ of $X_\varepsilon$ with an uncertain constant $C_B$, which will be defined later :
	\begin{equation}
	B=\left\{\vpsi \in X_{\varepsilon} :\|\vpsi\|_{X_{\varepsilon}} \leq C_{B} \varepsilon^2\right\}.\label{ball}
	\end{equation}
	We define the operator $A_{\vpsi} : X_\varepsilon\to X_\varepsilon$ following:
	\begin{align}
	\left(A_{\vpsi} \vf, \vg\right)_{X_{\varepsilon}}&=\int_{0}^{1}\left\langle\delta^{2} E_{\mathrm{a}}\left[\vu^{t}\right] \vf, \vg\right\rangle_{\varepsilon} \mathrm{d} t, \quad \vf, \vg \in X_{\varepsilon},
	\label{aw}
	\end{align}
	where $\vu^t=\vv+t\vpsi$ for $t\in [0,1]$. Next, we prove that for every $\vpsi$, $A_{\vpsi}$ is invertible. Thanks to Lax--Milgram Theorem, we only need to prove the following three parts:
	
	\textbf{(i)} $A_{\vpsi}$ is well-defined and $\left(A_{\vpsi} \vf, \vg\right)_{X_{\varepsilon}}$ is bi-linear.
	
	\textbf{(ii)} There exists a constant $c>0$ such that for every $\vf, \vg \in X_{\varepsilon}$, $$\left(A_{\vpsi} \vf, \vg\right)_{X_{\varepsilon}}\le c\|\vf\|_{X_\varepsilon}\|\vg\|_{X_\varepsilon}.$$
	
	\textbf{(iii)} There exists a constant $C>0$ such that for every $\vf\in X_{\varepsilon}$, $$\left(A_{\vpsi} \vf, \vf\right)_{X_{\varepsilon}}\ge C\|\vf\|_{X_\varepsilon}^2.$$
	
	For (i) and (ii), it is easy to check from the definition of $A_{\vpsi}$. For (iii), for every $t$, we have $\|\vu^t-\vv\|_{X_\varepsilon}\le t\|\vpsi\|_{X_\varepsilon}\le C_B\varepsilon^2$. Hence for Proposition \ref{stta}, we have $
	\left\langle\delta^{2} E_{\mathrm{a}}\left[\vu^{t}\right] \vf, \vf\right\rangle_{\varepsilon} \geq C\|\vf\|_{X_{\varepsilon}}^{2}
	.$ Thus we can get $\left(A_{\vpsi} \vf, \vf\right)_{X_{\varepsilon}}\ge C\|\vf\|_{X_\varepsilon}^2.$ By the above analysis, $A_{\vpsi}$ is invertible.
	
	By the Newton--Leibniz formula, we have $$
	\begin{aligned}\left\langle\delta E_{\mathrm{a}}[\vv+\vw], \vf\right\rangle_{\varepsilon} -\left\langle\delta E_{\mathrm{a}}[\vv], \vf\right\rangle_{\varepsilon}&=\int_{0}^{1}\left\langle\delta^{2} E_{\mathrm{a}}\left[\vu^{t}\right] \vw, \vf\right\rangle_{\varepsilon} \mathrm{d} t \\ &=\left(A_{\vw}\vw , \vf\right)_{X_{\varepsilon}} ,\end{aligned}
	$$where $\vw\in B$.
	
	If $\vv+\vw$ solve atomistic model, we can get a $\vw$ to solve
	\begin{equation}
	\left(A_{\vw}\vw, \vf\right)_{X_{\varepsilon}}=-\left\langle\delta E_{\mathrm{a}}[\vv], \vf\right\rangle_{\varepsilon}\text{  for all  }\vf\in X_\varepsilon.\label{solution}
	\end{equation}
	Next we define a map $G:B\to X_\varepsilon$ for $\vw\in B$ as
	\begin{equation}
	\left(A_{\vw}G(\vw), \vf\right)_{X_{\varepsilon}}=-\left\langle\delta E_{\mathrm{a}}[\vv], \vf\right\rangle_{\varepsilon}\text{  for all  }\vf\in X_\varepsilon.\label{main}
	\end{equation}Now we prove that $G$ is a contraction mapping.
	
	\textbf{(i)} Find a properly $C_B$ in the definition of $B$ to make $G(B)\subset B$. Since $\left(A_{\vpsi} \vf, \vf\right)_{X_{\varepsilon}}\ge C\|\vf\|_{X_\varepsilon}^2$ and by Proposition \ref{consistency pro}, we have
	\begin{align} C\|G(\vw)\|_{X_{\varepsilon}}^{2} & \leq\left(A_{\vw} G(\vw), G(\vw)\right)_{X_{\varepsilon}} \notag\\ & \leq\left|\left\langle\delta E_{\mathrm{a}}[\vv], G(\vw)\right\rangle_{\varepsilon}\right| \notag\\ & \leq \varepsilon^2\|G(\vw)\|_{X_{\varepsilon}}. \label{constant}\end{align}
	Thus we can choose a proper $C_B$ from \eqref{constant}.
	
	\textbf{(ii)} Show that
	$
	\left\|G(\vw)-G\left(\vw^{\prime}\right)\right\|_{X_{\varepsilon}} \leq L\left\|\vw-\vw^{\prime}\right\|_{X_{\varepsilon}} \text { for any }\vw',\vw\in B
	$ and $L<1$.
	
	Here we have $$
	\begin{aligned}\left\|G(\vw)-G\left(\vw^{\prime}\right)\right\|_{X_{\varepsilon}}^{2} &=\left(G(\vw)-G\left(\vw^{\prime}\right),G(\vw)-G\left(\vw^{\prime}\right)\right)_{X_{\varepsilon}}\\&=\left|\left\langle\delta E_{\mathrm{a}}[\vv],\left(A_{\vw}^{-1}-A_{\vw^{\prime}}^{-1}\right)\left(G(\vw)-G\left(\vw^{\prime}\right)\right)\right\rangle_{\varepsilon}\right| \\ &=C\varepsilon^2\left\|\left(A_{\vw}^{-1}-A_{\vw^{\prime}}^{-1}\right)\left(G(\vw)-G\left(\vw^{\prime}\right)\right)\right\|_{X_{\varepsilon}} \\ &=C\varepsilon^2\left\|A_{\vw}^{-1}\left(A_{\vw}-A_{\vw^{\prime}}\right) A_{\vw^{\prime}}^{-1}\left(G(\vw)-G\left(\vw^{\prime}\right)\right)\right\|_{X_{\varepsilon}} \\ & \leq C\varepsilon^2\left\|A_{\vw}^{-1}\right\|_{\mathrm{op}} \cdot\left\|A_{\vw}-A_{\vw^{\prime}}\right\|_{\mathrm{op}} \cdot\left\|A_{\vw^{\prime}}^{-1}\right\|_{\mathrm{op}} \\ & ~~\cdot\left\|G(\vw)-G\left(\vw^{\prime}\right)\right\|_{X_{\varepsilon}}, \end{aligned}
	$$where $\left\| \cdot\right\|_{\mathrm{op}}$ is norm of operator.
	
	Thanks to $$
	C\left\|A_{\vw}^{-1} \vf\right\|_{X_{\varepsilon}} \leq \frac{( A_{\vw} A_{\vw}^{-1} \vf, A_{\vw}^{-1} \vf)_{X_\varepsilon}}{\left\|A_{\vw}^{-1} \vf\right\|_{X_{\varepsilon}}} \leq\|\vf\|_{X_{\varepsilon}},
	$$ we have
	\begin{equation}
	\left\|A_{\vw}^{-1}\right\|_{\mathrm{op}} \leq C^{-1},\left\|A_{\vw^{\prime}}^{-1}\right\|_{\mathrm{op}} \leq C^{-1}
	.\label{op 1}
	\end{equation}
	By Lemma \ref{near small}, we have $$
	\begin{aligned}&\left\|\left(A_{\vw}-A_{\vw^{\prime}}\right) \vf\right\|_{X_{\varepsilon}}^{2}\\  =&\int_{0}^{1}\left\langle\left(\delta^{2} E_{\mathrm{a}}[\vv+t \vw]-\delta^{2} E_{\mathrm{a}}\left[\vv+t \vw^{\prime}\right]\right) \vf,\left(A_{\vw}-A_{\vw^{\prime}}\right) \vf\right\rangle_{\varepsilon} \mathrm{d} t \\ \leq&  \int_{0}^{1} C \varepsilon^{-1 }\left\|t \vw-t \vw^{\prime}\right\|_{X_{\varepsilon}}\|\vf\|_{X_{\varepsilon}}\left\|\left(A_{\vw}-A_{\vw^{\prime}}\right) \vf\right\|_{X_{\varepsilon}} \mathrm{d} t \\ \leq&  C \varepsilon^{-1 }\left\|\vw-\vw^{\prime}\right\|_{X_{\varepsilon}}\|\vf\|_{X_{\varepsilon}}\left\|\left(A_{\vw}-A_{\vw^{\prime}}\right) \vf\right\|_{X_{\varepsilon}} \end{aligned}
	.$$Therefore
	\begin{equation}
	\left\|A_{\vw}-A_{\vw^{\prime}}\right\|_{\mathrm{op}} \leq C \varepsilon^{-1 }\left\|\vw-\vw^{\prime}\right\|_{X_{\varepsilon}}
	\label{op 2}
	\end{equation}
	Thanks to \eqref{op 1} and \eqref{op 2}, we can get :$$
	\left\|G(\vw)-G\left(\vw^{\prime}\right)\right\|_{X_{\varepsilon}} \leq C \left\|\vw-\vw^{\prime}\right\|_{X_{\varepsilon}} C \varepsilon \leq L\left\|\vw-\vw^{\prime}\right\|_{X_{\varepsilon}},
	$$where $L<1$ for sufficient small $\varepsilon$.
	
	Thus $G$ is a contraction mapping, which means there is a unique fixed point $\vw_\varepsilon$, i.e., $G(\vw^\varepsilon)=\vw^\varepsilon$. This implies that $\vv^\varepsilon:=\vv+\vw^\varepsilon$ is the solution of atomistic model. Furthermore, by Proposition \ref{sta a}, we have that for all  $f\in X_\varepsilon$, \[\begin{aligned}\left\langle\delta^{2} E_\text{a}[\vv^\varepsilon] \vf, \vf\right\rangle_{\varepsilon}&\ge C\|\vf\|^2_{X_\varepsilon},\\\left\langle\delta E_\text{a}[\vv^\varepsilon], \vf\right\rangle_{\varepsilon}&=0.\end{aligned}\]
	Therefore $\vv^\varepsilon$ is $X_\varepsilon$--local minimizer of the energy.
\end{proof}

\section*{Acknowledgements}
This work was supported by the Hong Kong Research Grants Council General Research Fund 16313316.

\bibliographystyle{plain}
\bibliography{references}
\end{document}